\titlespacing*{\section}{0pt}{\baselineskip}{0pt}
\titlespacing*{\subsection}{0pt}{0.5\baselineskip}{0pt}
\let\OLDthebibliography\thebibliography
\renewcommand\thebibliography[1]{
  \OLDthebibliography{#1}
  \setlength{\parskip}{0pt}
  \setlength{\itemsep}{4pt plus 0.3ex}
}
\setlist{leftmargin=0.8cm,topsep=0pt}
\setlist[enumerate]{label=\rm{(\roman*)}}
\numberwithin{equation}{section}
\def\blfootnote{\gdef\@thefnmark{}\@footnotetext}
\newcommand{\dateline}{\enlargethispage{16.5pt}\blfootnote{\emph{Date} \today}}
\newcommand{\<}{\langle}
\renewcommand{\>}{\rangle}
\renewcommand{\leq}{\leqslant}
\renewcommand{\geq}{\geqslant}
\newtheoremstyle{shdefinition}{\topsep}{0.4\topsep}{}{}{\bfseries}{.}{0.5em}{} 
\newtheoremstyle{shplain}{\topsep}{0.4\topsep}{\itshape}{}{\bfseries}{.}{0.5em}{} 
\theoremstyle{shdefinition}
\newtheorem{definition}{Definition}[section]
\newtheorem*{definition*}{Definition}
\newtheorem{remark}[definition]{Remark}
\newtheorem{myquestion}{Question}
\newtheorem{example}[definition]{Example}
\newtheorem{notation}[definition]{Notation}
\theoremstyle{shplain}
\newtheorem*{conjecture*}{Conjecture}
\newtheorem{theorem}[definition]{Theorem}
\newtheorem*{theorem*}{Theorem}
\newtheorem{mytheorem}{Theorem}
\newtheorem{proposition}[definition]{Proposition}
\newtheorem{lemma}[definition]{Lemma}
\newtheorem{corollary}[definition]{Corollary}
\begin{document}

\begin{center}
{\LARGE \textbf{Infinite $\boldsymbol{\frac{3}{2}}$-generated groups}} \\[11pt]
{\Large Casey Donoven and Scott Harper} \normalsize \\[22pt]
\end{center}

\begin{center}
\begin{minipage}{0.8\textwidth}
{\small \textbf{Abstract.} Every finite simple group can be generated by two elements, and Guralnick and Kantor proved that, moreover, every nontrivial element is contained in a generating pair. Groups with this property are said to be $\frac{3}{2}$\=/generated. Thompson's group $V$ was the first finitely presented infinite simple group to be discovered. The Higman--Thompson groups $V_n$ and the Brin--Thompson groups $mV$ are two families of finitely presented groups that generalise $V$. In this paper, we prove that all of the groups $V_n$, $V_n'$ and $mV$ are $\frac{3}{2}$\=/generated. As far as the authors are aware, the only previously known examples of infinite noncyclic $\frac{3}{2}$\=/generated groups are the pathological Tarski monsters. We conclude with several open questions motivated by our results. \par
}
\end{minipage}
\end{center}

\dateline

\section{Introduction}

A group $G$ is \emph{$d$\=/generated} if it has a generating set of size $d$. We say that $G$ is \emph{$\frac{3}{2}$\=/generated} if $G$ is $2$\=/generated and, moreover, every nontrivial element of $G$ is contained in a generating pair. 

It is a remarkable fact that every finite simple group is $2$\=/generated. This result relies on the Classification of Finite Simple Groups and the proof for the groups of Lie type was given by Steinberg in 1962 \cite{ref:Steinberg62}. In that paper, Steinberg asked whether every finite simple group has the much stronger property of being $\frac{3}{2}$-generated. The alternating (and symmetric) groups of degree at least five had been known to be $\frac{3}{2}$-generated since the work of Piccard in 1939 \cite{ref:Piccard39}. In 2000, Guralnick and Kantor \cite{ref:GuralnickKantor00} resolved this long-standing question of Steinberg and proved the following.

\begin{theorem*}
Every finite simple group is $\frac{3}{2}$\=/generated.
\end{theorem*}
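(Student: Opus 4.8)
The plan is to prove this via the Classification of Finite Simple Groups, treating the alternating groups, the sporadic groups, and the groups of Lie type in turn. The alternating (and symmetric) groups of degree at least five are covered by the classical arguments of Piccard, and the finitely many sporadic groups---together with the small exceptional cases that the generic methods inevitably miss---can be settled by direct computation. The substance therefore lies with the groups of Lie type, to which I would apply a uniform probabilistic method. As a first reduction, it suffices to treat a nontrivial $x$ of prime order: if $p$ divides $|x|$ and $x' = x^{|x|/p}$, then any $y$ with $\<x',y\> = G$ also satisfies $\<x,y\> = G$, since $x' \in \<x\>$. So the goal becomes: for each $x$ of prime order, exhibit a single $y$ with $\<x,y\> = G$.

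The core is a counting principle. Fix a conjugacy class $s^G$ and let $y$ range over it. A pair $(x,y)$ fails to generate $G$ precisely when both elements lie in a common maximal subgroup $M$, so choosing $y = s^g$ with $g$ uniform, the failure probability is bounded by
\[
P(x,s) \leq \sum_{M \in \mathcal{M}(x)} \frac{|s^G \cap M|}{|s^G|} = \sum_{M \in \mathcal{M}(x)} \mathrm{fpr}(s, G/M),
\]
where $\mathcal{M}(x)$ is the set of maximal subgroups containing $x$ and $\mathrm{fpr}(s,G/M)$ is the fixed point ratio of $s$ acting on the cosets $G/M$. Grouping the maximal subgroups into their $G$-classes and using $|\mathcal{M}(x) \cap M^G| = \mathrm{fpr}(x, G/M)\,|G:M|$ (valid since maximal subgroups of a simple group are self-normalising), this rewrites as $\sum |G:M|\,\mathrm{fpr}(x,G/M)\,\mathrm{fpr}(s,G/M)$, the sum over class representatives $M$. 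If this is strictly less than $1$ for some $s$, a generating $y$ exists.

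To make the bound effective I would choose $s$ to lie in as few maximal subgroups as possible while having uniformly small fixed point ratios. The natural candidates are regular semisimple elements of large order---for instance those whose order is divisible by a primitive prime divisor of $q^e - 1$ for a suitable $e$---since such elements are confined to maximal subgroups of a few predictable Aschbacher types. The fixed point ratio estimates, of the shape $\mathrm{fpr}(g, G/M) \leq |G:M|^{-\epsilon}$ with sharper forms available from the work of Liebeck--Saxl, Guralnick--Kantor and others, then force each term, and hence the whole sum, to be small. Running through the Aschbacher classes $\mathcal{C}_1, \ldots, \mathcal{C}_8$ and the class $\mathcal{S}$ of almost simple maximal subgroups, one bounds the contribution of each family in turn.

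The hard part will be the groups of Lie type of small rank over small fields, where the generic fixed point ratio bounds are too weak and the supply of maximal subgroups is comparatively large relative to $|G:M|$. Here I expect to need careful, case-by-case analysis of the subgroup structure---and explicit computation for the smallest groups---to locate an element $s$ in a genuinely small collection of maximal subgroups and to verify $P(x,s) < 1$ directly. Controlling these low-rank families, together with the delicate business of choosing the class $s^G$ as a function of the type of $x$, is where the real difficulty of the theorem resides.
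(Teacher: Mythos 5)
The paper does not contain a proof of this statement: it is quoted as background, the proof being the content of the cited work of Guralnick and Kantor \cite{ref:GuralnickKantor00}. Your outline---reduction to elements of prime order, the bound $P(x,s) \leq \sum_{M \in \mathcal{M}(x)} \mathrm{fpr}(s, G/M)$, the choice of (regular semisimple) elements $s$ confined to few maximal subgroups, and the Aschbacher-class analysis with the low-rank and small-field groups handled separately---is precisely the strategy of that paper, so your proposal follows essentially the same route as the proof the authors cite.
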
 

This theorem has motivated a great deal of recent work on the generation of finite groups (see Section~3 of Burness' recent survey article \cite{ref:Burness19} for a good overview of this work).

It is easily seen that if $G$ is a $\frac{3}{2}$\=/generated group, then every proper quotient of $G$ is cyclic. The following conjecture, due to Breuer, Guralnick and Kantor \cite{ref:BreuerGuralnickKantor08}, avers that this clearly necessary condition is indeed sufficient for finite groups.

\begin{conjecture*}
A finite group $G$ is $\frac{3}{2}$\=/generated if and only if every proper quotient of $G$ is cyclic.
\end{conjecture*}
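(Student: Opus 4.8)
The forward implication is the easy one noted above: if $G$ is $\frac{3}{2}$\=/generated and $1 \neq N \trianglelefteq G$, then choosing a nontrivial $n \in N$ and $g \in G$ with $\langle n, g \rangle = G$ and reducing modulo $N$ gives $G/N = \langle gN \rangle$, which is cyclic. So the whole problem is the converse, and the plan is to assume that $G/N$ is cyclic for every nontrivial $N \trianglelefteq G$ and to produce, for each nontrivial $x \in G$, a mate $y$ with $\langle x, y \rangle = G$. First I would extract the structure forced by the hypothesis. If $G$ has two distinct minimal normal subgroups $M, N$, then $M \cap N = 1$ embeds $G$ into the product of cyclic groups $G/M \times G/N$, forcing $G$ abelian; a short check then shows $G$ is cyclic or elementary abelian of rank $2$, both of which are immediately $\frac{3}{2}$\=/generated. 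Hence I may assume $G$ has a unique minimal normal subgroup $N = \operatorname{soc}(G)$, and split according to whether $N$ is abelian.

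If $N$ is elementary abelian then, since $G/N$ is cyclic, $G = N \rtimes \langle g \rangle$ is soluble with $N$ an irreducible $\mathbb{F}_p\langle g\rangle$\=/module, and I would settle this affine case by module\=/theoretic counting: for fixed nontrivial $x$, estimate the number of $y$ lying in the various maximal subgroups (all of which involve controlled pieces of $N$ and $\langle g \rangle$) to show a generating $y$ survives. If instead $N = T_1 \times \cdots \times T_k$ with each $T_i \cong T$ nonabelian simple, then $G/N$ cyclic forces $G$ to permute the factors transitively, and the reduction theorems of Guralnick--Kantor and Breuer--Guralnick--Kantor let me pass to the almost simple case $T \leq G \leq \operatorname{Aut}(T)$ (the remaining product\=/type configurations being handled by analogous but more elaborate bookkeeping), which is the real heart of the matter.

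For the almost simple case the tool is the probabilistic method. For nontrivial $x \in G$, let $P(x)$ denote the proportion of $y \in G$ with $\langle x, y \rangle \neq G$; it suffices to show $P(x) < 1$. Bounding above by the contributions of the maximal overgroups of $x$ yields $P(x) \leq \sum_M \operatorname{fpr}(x, G/M)$, the sum running over representatives of the $G$\=/classes of maximal subgroups $M$, where $\operatorname{fpr}(x, G/M)$ is the fixed\=/point ratio of $x$ on the coset space $G/M$. The plan is then to invoke the Classification together with the Liebeck--Shalev fixed\=/point\=/ratio estimates, family by family (alternating, classical, exceptional, sporadic), to show this sum is less than $1$ — indeed usually less than $\frac{1}{2}$, which would even yield spread at least $2$.

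The hard part will be precisely this almost simple analysis. The probabilistic bound degrades for elements $x$ lying in unusually many maximal subgroups — for instance certain unipotent or field\=/automorphism type elements in the classical groups — and for groups of small rank or over small fields, where the fixed\=/point ratios are simply not small enough. For these I expect to need the sharper, non\=/uniform techniques developed for this problem: Shintani descent to transfer generation statements between a group and a twisted subgroup, explicit constructions of generating pairs, and direct computation for the finitely many genuinely small cases. Assembling all of these case analyses into a single uniform statement over the finite simple groups, rather than any one clever argument, is the main obstacle.
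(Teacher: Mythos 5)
The first thing to note is that this statement is not a theorem of the paper at all: it appears as an open \emph{conjecture}, attributed to Breuer, Guralnick and Kantor, and the paper offers no proof of it --- it only cites work towards it (Burness--Guest and Harper on uniform spread of almost simple groups). So there is no proof in the paper to compare yours against; the only question is whether your proposal stands on its own as a proof. It does not. Your forward implication is correct, and your structural reductions are sound: two distinct minimal normal subgroups correctly collapse the group to a cyclic group or $C_p \times C_p$, and the dichotomy on whether the unique minimal normal subgroup $N = \operatorname{soc}(G)$ is abelian is the right frame (modulo a small repair in the affine case, where the extension $G = N \rtimes \langle g \rangle$ need not split a priori; one must first argue that $\langle g \rangle \cap N \neq 1$ forces $N$ central and hence $G$ abelian, a case already handled).

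Beyond that point, however, the proposal is a research programme rather than an argument, and the deferred steps are exactly where all the difficulty lives. Concretely: (1) in the affine case, no ``module-theoretic counting'' is actually performed; (2) the claim that reduction theorems of Guralnick--Kantor and Breuer--Guralnick--Kantor ``let me pass to the almost simple case'' when $N = T_1 \times \cdots \times T_k$ with $k \geq 2$ is false as stated --- those papers concern simple and almost simple groups and provide no such reduction, and the product-type case (where $G/N$ cyclic forces a cyclic twist permuting the $k$ factors, analysed via Shintani descent) is a substantial piece of work in its own right, not ``bookkeeping''; and (3) the almost simple case, which you correctly identify as the heart of the matter, is handled only by announcing that fixed-point-ratio estimates and ``sharper, non-uniform techniques'' will be needed. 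The strategy you outline is indeed the one pursued in the literature --- and the conjecture was in fact later proved along precisely these lines by Burness, Guralnick and Harper, in work postdating this paper --- but that proof occupies a long Annals paper; every step of your proposal at which the conjecture could conceivably fail is left as a stated expectation rather than established, so what you have is a correct plan, not a proof.
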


For recent work towards proving this conjecture, see \cite{ref:BurnessGuest13, ref:Harper17}.

The above necessary condition for $\frac{3}{2}$\=/generation is \emph{not} sufficient for infinite groups. Indeed, the infinite alternating group $A_\infty$ is simple but not finitely generated, let alone $\frac{3}{2}$\=/generated. Moreover, in \cite{ref:Guba82} a simple group is exhibited that is finitely generated but not 2-generated, which solves Problem~6.44 in the Kourovka Notebook \cite{ref:Kourovka14}. However, the authors are not aware of any $2$\=/generated group with no noncyclic proper quotients that is not $\frac{3}{2}$\=/generated. Equally, the authors are not aware of any existing examples of noncyclic infinite $\frac{3}{2}$\=/generated groups, other than \emph{Tarski monsters}. These latter groups are the infinite groups whose only proper nontrivial subgroups have order $p$ for a fixed prime $p$; they are clearly simple and $\frac{3}{2}$\=/generated, and they were proved to exist for all $p > 10^{75}$ by Olshanskii in \cite{ref:Olshanskii80}. 

In this paper, we provide two natural infinite families of infinite $\frac{3}{2}$\=/generated groups.

Thompson's group~$V$, introduced in 1965, was the first known example of a finitely presented infinite simple group \cite{ref:Thompson65}, and until 2000 \cite{ref:BurgerMozes00} all known examples of such groups were close relatives of $V$. The group $V$ naturally arises as a group of homeomorphisms of Cantor space $\mathfrak{C} = \{0,1\}^{\mathbb{N}}$, and, together with its subgroup $T$ and $F$ and the generalisations we will soon introduce, it has been the subject of much research, finding connections with dynamics and the word problem, among much else (see \cite{ref:BleakCameronAl16,ref:BleakMatucciNeunhoffer16,ref:BleakSalazarDiaz13,ref:CannonFloydParry96,ref:Thumann17} for example). 

As observed by Mason \cite{ref:Mason77}, $V$ is 2-generated, and we prove the following.

\begin{theorem*}
Thompson's group $V$ is $\frac{3}{2}$-generated.
\end{theorem*}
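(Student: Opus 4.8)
The plan is to prove the statement directly by constructing, for each nontrivial $g \in V$, an explicit partner $h$ with $\langle g, h\rangle = V$; since Mason's result already gives that $V$ is $2$-generated, this is all that remains. I work with the standard model of $V$ as the group of prefix-replacement homeomorphisms of Cantor space $\mathfrak{C} = \{0,1\}^{\mathbb{N}}$, in which an element is recorded by a bijection between the leaves of two finite binary subdivisions. The two structural features I will exploit are \emph{self-similarity} --- for any cone (cylinder) $U = U_\alpha$, the subgroup $V_U$ of elements supported on $U$ is isomorphic to $V$ --- and the \emph{rich transitivity} of $V$ on cones, together with the fact (to be recorded as a preliminary lemma) that $V$ is generated by the copies $V_{U_1}, \dots, V_{U_k}$ attached to the parts of any subdivision $\{U_1,\dots,U_k\}$ together with a set of elements realizing the full symmetric group of rearrangements of those parts.

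Fix $1 \neq g$. As $g$ is a nontrivial homeomorphism it moves some point, so by continuity there is a cone $U$ with $g(U) \cap U = \emptyset$; this ``displacing'' behaviour is the engine of the construction. The partner $h$ will be assembled so that $H := \langle g, h\rangle$ does two things. First, using the Higman-style commutator trick that drives the simplicity proof for $V$ --- for $t$ supported in $U$ the element $[t,g]$ has support inside $U \cup g(U)$ --- I will arrange, by choosing the part of $h$ living on and around $U$, that short words in $g$ and $h$ produce elements of $H$ supported on ever smaller subcones of $U$, whose combinations generate a full self-similar copy $V_{U'} \cong V$ for some cone $U' \subseteq U$. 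Second, I will choose the remaining freedom in $h$ so that $H$ is transitive on a suitable finite collection of cones and realizes the symmetric-group rearrangements of them; here the $\frac{3}{2}$-generation of the finite alternating groups (Piccard; Guralnick--Kantor) can be used to complete the permutation part induced by $g$ to the whole (alternating, hence symmetric after one further generator) group on the chosen cones. Feeding both outputs into the preliminary generation lemma then yields $H = V$.

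The main obstacle is uniformity: a single recipe, or a short explicit case division, for $h$ must succeed for \emph{every} nontrivial $g$, and the awkward cases --- torsion elements, elements of small support, elements whose support is all of $\mathfrak{C}$, and elements with complicated (e.g.\ aperiodic) dynamics on $\mathfrak{C}$ --- behave very differently and resist a one-size-fits-all partner. Within this, the genuinely delicate step is the first one: extracting from $\langle g, h\rangle$ an element whose support is \emph{exactly} a prescribed small cone and which acts there as a prescribed homeomorphism, so that one recovers a \emph{full} copy of $V_{U'}$ rather than merely some large subgroup of it; controlling the supports precisely under the commutator and conjugation operations is where the bulk of the technical work will lie. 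I also expect to need to verify that $H$ escapes every ``obvious'' proper subgroup --- in particular that it stabilizes no finite partition and fixes no point or cone --- which, unlike the finite case, cannot be handled by a maximal-subgroup count, since the maximal subgroups of $V$ are not classified, and so must be checked by hand as a consequence of the transitivity and the self-similar copy produced above.
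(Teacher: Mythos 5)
Your high-level architecture does match the paper's: displace a cone (Lemma~\ref{lem:incomparables_exist} gives $u$ with $u \perp ug$), manufacture inside $\langle g,h\rangle$ a full self-similar copy of $V$ on that cone, and reassemble $V$ from that copy together with transpositions linking it to the remaining parts of a basis (your ``preliminary generation lemma'' is Proposition~\ref{prop:links}). But the two steps you yourself flag as delicate are genuine gaps, and the mechanisms you propose for them would fail. The commutator route to a full copy $V_{U'}$ is circular: to exploit the fact that $[t,g]$ is supported in $U \cup g(U)$ you already need elements $t \in \langle g,h\rangle$ supported in $U$, and nothing in your construction supplies them; even granted such elements, commutators and conjugations only produce \emph{some} subgroup supported near $U$, and upgrading this to \emph{exactly} a self-similar copy of $V$ on a cone is precisely what you admit you cannot yet do. Separately, the plan to ``complete the permutation part induced by $g$'' on a chosen finite collection of cones via Piccard or Guralnick--Kantor is ill-posed for general $g$: a non-torsion element of $V$ (one whose basis pair has distinct source and target bases) does not permute any finite collection of cones, so there is no induced permutation to complete; only the torsion elements of $V$ behave that way.

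The paper closes both gaps with two devices absent from your sketch. First, $h$ is built as a product $h = x\,y\,z_0 z_1\cdots z_\ell$ of factors with pairwise \emph{disjoint supports} and pairwise \emph{coprime orders} (for $V$: orders $6$, $7$, $5$, $11, \dots$); coprimality makes each factor a power of $h$, hence an element of $\langle g,h\rangle$ whose support is known exactly by construction. This replaces commutators entirely and is what makes one recipe uniform over all nontrivial $g$: the only input from $g$ is the displaced cone $u$ and a basis containing $u$ and $ug$. Second, two of the factors are $x = \alpha_{[u]}$ and $y = \beta_{[ug]}$, the two halves of a prepared generating pair $\langle \alpha,\beta\rangle = V$ (Proposition~\ref{prop:alpha_beta}, certified by Mason's two-cycle Lemma~\ref{lem:permutations_fact} --- the only finite-group input needed, rather than $\frac{3}{2}$-generation of alternating groups); conjugation by $g$ carries $y$ to $\beta_{[u]}$, so $V_{[u]} = \langle \alpha_{[u]}, \beta_{[u]} \rangle \leq \langle g,h\rangle$. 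That is the step that produces ``exactly $V_{U'}$'', and it uses $g$ only through $u \perp ug$, with no hypothesis on its dynamics. Finally, the remaining factors $z_i$ are cycles of distinct prime orders linking $u$ to the other basis elements $w_i$ (and to $ug$); multiplying off the portion of each $z_i$ lying in $V_{[u]}$ and conjugating by elements of $V_{[u]}$ and $V_{[ug]}$ yields the transpositions $(u0 \ \ w_i)$ and $(u0 \ \ ug)$ required by Proposition~\ref{prop:links}. Because generation is proved directly from these explicit generating sets, your final concern --- verifying by hand that $\langle g,h\rangle$ stabilizes no partition, point or cone --- never arises.
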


In fact, we will prove two theorems which generalise the above theorem in two different directions.

The \emph{Higman--Thompson group} $V_n$, for all $n \geq 2$, is an infinite finitely presented group, which was introduced by Higman in \cite{ref:Higman74}. The group $V_n$ has a natural action on $n$\=/ary Cantor space $\mathfrak{C}_n = \{0,1,\dots,n-1\}^{\mathbb{N}}$, and the group $V_2$ is nothing other than Thompson's group $V$. The derived subgroup of $V_n$ equals $V_n$ when $n$ is even and has index two when $n$ is odd. In both cases, $V_n'$ is simple. Mason proved that both $V_n$ and $V_n'$ are $2$\=/generated \cite{ref:Mason77}, and we prove the following.

\begin{mytheorem}\label{thm:higman}
For all $n \geq 2$, the Higman--Thompson groups $V_n$ and $V_n'$ are $\frac{3}{2}$-generated.
\end{mytheorem}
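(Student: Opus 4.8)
The groups $V_n$ and $V_n'$ are already known to be $2$\=/generated by Mason's work, so the task is to show that every nontrivial element lies in a generating pair: given nontrivial $g$, I must construct $h$ with $\langle g,h\rangle$ equal to the whole group. The plan is first to reduce the two statements to one. Since $V_n'$ is simple and has index at most $2$ in $V_n$, every subgroup containing $V_n'$ is either $V_n'$ or $V_n$; thus the crux is to force $V_n'$ itself inside $\langle g,h\rangle$, after which a short coset computation determines which of the two groups is generated. Concretely, one treats separately the cases $g\in V_n'$ and $g\notin V_n'$ (the latter arising only for odd $n$, where $V_n/V_n'$ has order two): in the first case $h$ should be chosen inside or outside $V_n'$ according to the target group, while in the second $g$ already surjects onto the quotient, so it suffices to capture $V_n'$ and one may take $h\in V_n'$.

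The starting point of the construction is a \emph{foothold}. Since $g\neq 1$ it moves some point of $\mathfrak{C}_n$, and by continuity of the action on Cantor space it moves an entire cone; after shrinking there is a clopen cone $C$ with $gC\cap C=\varnothing$. This disjointness is precisely the hypothesis that powers commutator tricks. I would then choose $h$ with strong dynamical properties, namely an element that shuffles a chosen family of pairwise disjoint cones while contracting towards a fixed point inside the foothold, arranged so that the conjugates $g,\,hgh^{-1},\,h^2gh^{-2},\dots$ have controlled and essentially disjoint supports. Forming suitable commutators and products of $g$ with these conjugates should then manufacture, inside $\langle g,h\rangle$, a nontrivial element $t$ of \emph{small support}, acting as a prescribed piece of $V_n$ on a single cone and trivially elsewhere.

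Once such a small\=/support element is available, I would exploit the flexibility of $V_n$: it acts transitively on cones, and $V_n$ (hence $V_n'$) is generated by elements of small support, for instance by the transpositions of pairs of cones. The idea is therefore to arrange that $h$ moves the support of $t$ through a family of cones covering $\mathfrak{C}_n$, so that $\langle g,h\rangle$ contains enough small\=/support elements to generate the rigid stabilisers of a covering family of cones, and thereby all of $V_n'$. Combined with the reduction of the first paragraph, $\langle g,h\rangle\supseteq V_n'$ then forces $\langle g,h\rangle$ to equal the intended group.

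The main obstacle I anticipate is \emph{uniformity}: a single recipe for $h$ must succeed for every nontrivial $g$ at once, and the hardest cases are the ``generic'' elements with full support and no convenient fixed cones, for which both securing a clean foothold and guaranteeing that $\langle g,h\rangle$ does not collapse into a proper subgroup (such as a point stabiliser, or a subgroup respecting some fixed cone decomposition) are delicate and must be ruled out by hand. A secondary difficulty is the bookkeeping separating $V_n$ from $V_n'$ when $n$ is odd, where one must verify that the manufactured small\=/support elements, and the chosen $h$, lie in $V_n'$ whenever that is the target group.
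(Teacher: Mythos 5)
Your opening move and your endgame are broadly consistent with the paper's structure: the foothold you describe is exactly Lemma~\ref{lem:incomparables_exist} (there exists $u$ with $u \perp ug$), and the finish---generating the group by small-support elements such as transpositions, using transitivity---corresponds to Lemma~\ref{lem:transpositions_strong} and Proposition~\ref{prop:links}. The genuine gap is in the middle, the mechanism that is supposed to place a nontrivial small-support element $t$ inside $\langle g,h\rangle$, and it is the very point you flag as your "main obstacle" without resolving it. You propose to arrange that $g, g^{h}, g^{h^2},\dots$ have essentially disjoint supports and then take commutators. Since $\mathrm{supp}(g^{h^i}) = \mathrm{supp}(g)h^i$, this is impossible whenever $g$ has full support: every conjugate of $g$ then also has full support, and no choice of $h$ creates any disjointness. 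The commutator tricks you allude to (as in the simplicity proofs for $V_n$) require auxiliary elements already supported inside the displaced cone $C$; in a two-generator problem those auxiliary elements are precisely what you do not yet possess, so the plan is circular. Leaving the full-support case to be "ruled out by hand" leaves the entire difficulty of the theorem unaddressed.

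The paper's key idea---absent from your proposal---avoids all hypotheses on the support of $g$. One builds $h$ as a product $h = x\, y\, z_0 z_1 \cdots z_\ell$ of pairwise commuting elements of pairwise coprime finite orders: $x = \alpha_{[u]}$ is supported in $u\mathfrak{C}_n$, $y = \beta_{[ug]}$ in $(ug)\mathfrak{C}_n$, and the $z_i$ are $p_i$-cycles (primes supplied by Nagura's refinement of Bertrand's postulate, Theorem~\ref{thm:bertand}) linking the cone at $u$ to the remaining cones $w_i$ of a basis containing $u$ and $ug$. Coprimality and commutativity make each factor a power of $h$, so $x, y, z_0, \dots, z_\ell \in \langle g,h\rangle$ automatically, for every nontrivial $g$; this is how small-support elements are obtained for free. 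The element $g$ is then used exactly once: conjugating $y = \beta_{[ug]}$ by $g^{-1}$ gives $\beta_{[u]}$ (this is exactly where $u \perp ug$ enters), whence $\langle x, y^{g^{-1}}\rangle = \langle \alpha_{[u]}, \beta_{[u]}\rangle = V_{n,[u]}$ by Proposition~\ref{prop:alpha_beta}, and the factors $z_i$ then produce the transpositions $(u0 \ \ w_i)$ and $(u0 \ \ v)$ needed for Proposition~\ref{prop:links}. (For $V_n'$ with $n$ odd one replaces $\alpha$ by $\alpha'$ and transpositions by three-cycles; that is the parity bookkeeping you correctly anticipated, but it is secondary.) Without this coprime-order commuting-product construction, or some substitute for it, your outline does not prove the theorem.
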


In particular, the groups $V_n$ when $n$ is odd give examples of infinite noncyclic $\frac{3}{2}$\=/generated groups that are not simple.

The \emph{Brin--Thompson group} $mV$, for all $m \geq 1$, acts on $\mathfrak{C}^m$ and was defined by Brin in \cite{ref:Brin04}. The groups $V=1V, 2V, 3V, \dots$ are pairwise nonisomorphic \cite{ref:BleakLanoue10}, simple \cite{ref:Brin10} and $2$\=/generated \cite[Corollary~1.3]{ref:Quick19}.

\begin{mytheorem}\label{thm:brin}
For all $m \geq 1$, the Brin--Thompson group $mV$ is $\frac{3}{2}$-generated.
\end{mytheorem}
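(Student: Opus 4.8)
The plan is to work with the faithful action of $mV$ on the $m$-dimensional Cantor space $\mathfrak{C}^m$, which is itself homeomorphic to a Cantor space, and to feed in the two facts already at our disposal: $mV$ is simple \cite{ref:Brin10} and $2$-generated \cite{ref:Quick19}. Because $mV$ is simple, for a fixed nontrivial $g$ we must genuinely exhibit a single element $h$ with $\langle g, h\rangle = mV$; appeals to normal closure are useless here, since $\langle g,h\rangle$ need not be normal. The one dynamical input I would extract from $g$ is the following: as $g \neq 1$ is a homeomorphism it moves some point of $\mathfrak{C}^m$, so by continuity there is a (dyadic) brick $B$, a product of cylinders, with $gB \cap B = \emptyset$. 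Shrinking $B$, I can further assume that $B$, $gB$ and $g^{-1}B$ are pairwise disjoint. The element $g$ then serves as a ``transporter'' carrying $B$ onto the disjoint brick $D = gB$, and this transporter is the only information about $g$ that the construction should use, so every later step must be made uniform in $g$.

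The heart of the matter is the construction of $h$, and here I would exploit the self-similarity of $mV$: the subgroup $G_B$ of elements of $mV$ supported on $B$, acting on $B \cong \mathfrak{C}^m$, is again isomorphic to $mV$, hence by \cite{ref:Quick19} is generated by a pair $x,y$ supported in $B$. The obstacle is that I am permitted only one new generator, not two, so I would have $g$ do double duty. Concretely, take $h$ supported on $B \cup D$ with $h|_B = x$ and $h|_D = gyg^{-1}|_D$; a direct check then gives $(g^{-1}hg)|_B = y$, so $h$ and its conjugate $g^{-1}hg$ both preserve $B$ and restrict there to $x$ and $y$ respectively. Thus restriction to $B$ maps $\langle g,h\rangle$ onto $\langle x,y\rangle = G_B \cong mV$. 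To upgrade this surjection to an honest containment $G_B \leq \langle g,h\rangle$, I would use the flexibility (``vigour'') of the action: since $B$, $gB$, $g^{-1}B$ are disjoint, the stray factors of $h$ and $g^{-1}hg$ living on $D$ and $g^{-1}B$ are separated from the $B$-factors, and commutator surgery with elements of small support localises a pure copy of $G_B$ inside $\langle g,h\rangle$.

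To finish, I would invoke a generation criterion of the following shape: $mV$ is generated by the rigid stabiliser of a single brick together with enough elements permuting bricks, so a subgroup that contains one self-similar copy $G_B$ and acts transitively (indeed flexibly) on the bricks must be all of $mV$. Transitivity on bricks I would arrange by building the requisite brick-transpositions into $h$ as well, again using that $g$ recombines data placed on $B$ and $D$. The construction of $h$ therefore proceeds in two layers, a ``local'' layer producing $G_B$ and a ``global'' layer producing the needed brick-permutations, both read off through conjugation by the single transporter $g$.

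I expect the main obstacle to be precisely this completion step: verifying that the subgroup assembled from one self-similar copy plus brick-flexibility is genuinely the whole of $mV$, and doing so uniformly over all nontrivial $g$, about which I know only that it moves some brick off itself. A secondary but real difficulty, special to the Brin–Thompson setting as opposed to $V$, is the rigidity of the product (dyadic-rectangle) brick structure: elements of $mV$ subdivide each of the $m$ coordinates independently, so the flexibility and transitivity arguments, and in particular the localisation surgery, are more delicate than in the one-dimensional case and must respect the product pattern. I anticipate that this analysis runs parallel to the proof of Theorem~\ref{thm:higman}, and that both theorems are in fact instances of a single criterion guaranteeing $\frac{3}{2}$-generation for simple, $2$-generated groups acting sufficiently vigorously on Cantor space.
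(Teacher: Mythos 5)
Your outline shares its skeleton with the paper's proof: the paper also fixes a brick $u\mathfrak{C}^m$ with $u \perp ug$ (Lemma~\ref{lem:incomparables_exist}), places one generator of a self-similar copy of $mV$ on that brick and a transported copy of the other generator on its image $(ug)\mathfrak{C}^m$, so that $g$-conjugation recovers the pair and hence $mV_{[u]} \leq \<g,h\>$, and then finishes with linking elements. But your execution has two genuine gaps. First, the claim that after shrinking one may assume $B$, $gB$ and $g^{-1}B$ are pairwise disjoint is false: if $g$ is an involution then $g^{-1}B = gB$ for \emph{every} $B$, and involutions (e.g.\ the transpositions, which generate $mV$) are certainly elements the theorem must handle. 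Your purification step really needs that disjointness. With $g^{-1}B = gB = D$ one has $h = x\cdot gyg^{-1}$ and $g^{-1}hg = y\cdot gxg^{-1}$, and any word $w(h, g^{-1}hg)$ restricts on $B$ to $w(x,y)$ but on $D$ to $g\,w(y,x)\,g^{-1}$; the stray factor vanishes only when $w(y,x)=1$, and there is no reason the elements $w(x,y)$ with $w(y,x)=1$ generate $G_B$. (When the three bricks \emph{are} pairwise disjoint your surgery can be made rigorous: words of zero exponent sum in both variables give pure elements of $G_B$, and these generate $[G_B,G_B]=G_B$ by simplicity. But that argument excludes exactly the involution case.)

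Second, your completion step is a statement of intent rather than an argument, and as set up it conflicts with your local layer. Any element linking $B$ to the remaining bricks must have support meeting both $B$ and its complement; once such factors are built into $h$, the element $h$ no longer preserves $B$, so restriction to $B$ is not a homomorphism on $\<h, g^{-1}hg\>$ and the commutator surgery collapses. The paper's device for decoupling the layers is precisely what your proposal lacks: all factors of $h$ are chosen to be pairwise commuting elements of finite, pairwise \emph{coprime} orders (the linking factors $z_i$ are $p_i$-cycles for distinct primes $p_i$, supplied by Nagura's refinement of Bertrand's postulate, Theorem~\ref{thm:bertand}), so that each factor is individually a power of $h$ and is recovered inside $\<g,h\>$ with no support bookkeeping at all. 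This in turn is why the paper cannot simply quote Quick's $2$-generation: it needs a generating pair consisting of finite-order permutations with coprime orders and fixed points, which it constructs by hand as $(\alpha,\beta)$ in Proposition~\ref{prop:alpha_beta_nv}. Finally, the generation criterion you hope exists is the paper's Proposition~\ref{prop:links_nv}: one needs the self-similar copy $mV_{[u]}$ together with explicit transpositions joining $u$ to every other element of a basis, not merely transitivity on bricks; producing those transpositions from powers of $h$ modulo $mV_{[u]}$ is the remaining half of the paper's proof.
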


Our proofs owe a great deal to the \emph{Bleak--Quick perspective}, introduced in \cite{ref:BleakQuick17}, that $mV$ (especially $V$) is analogous to the finite symmetric group, a viewpoint that readily extends to $V_n$. Indeed, in a sense, our proofs highlight that the $\frac{3}{2}$\=/generation of these groups is analogous to the $\frac{3}{2}$\=/generation of the finite symmetric groups.

For instance, the groups $V_n$ and $mV$ are generated by an infinite conjugacy class of involutions, known as \emph{transpositions}, so called because of their similarity to the transpositions in the finite symmetric group. In $mV$ and $V_{2k}$ every element is a product of an even number of transpositions (even a transposition is such a product), so we think of these groups as analogous to both the alternating and symmetric groups. When $n$ is odd, $V_n'$ is the subgroup of $V_n$ containing the elements that are a product of an even number of transpositions.

Our proofs are concrete and constructive. Let $G$ be one of $V_n$, $V_n'$ or $mV$ (here and throughout, the reader is encouraged to think of $V$ in the first instance). For every nontrivial element $g \in G$ we will exhibit an element $h \in G$ of finite order and prove that $\<g,h\>=G$. We prove that $\<g,h\>=G$ by essentially showing that $\<g,h\>$ contains all of the transpositions. We modify our approach slightly for $V_n'$, which is not generated by the transpositions when $n$ is odd. To describe the element $h \in G$ we will make use of the permutation-like notation of Bleak and Quick. 

We will begin with some preliminary results in Sections~\ref{s:prelims} and~\ref{s:generators}, before turning to the proofs of Theorems~\ref{thm:higman} and~\ref{thm:brin} in Section~\ref{s:proofs}. The authors expect that collecting together uniformly stated results on the generating sets for $V_n$, $V_n'$ and $mV$ in Section~\ref{s:generators} will be useful for others interested in studying these groups, as this information is currently spread across the literature in terms of various different descriptions of and notation for these groups. In Section~\ref{s:questions}, we conclude by posing several natural questions that are raised by our theorems.

\subsection*{Acknowledgements}
The second author thanks the London Mathematical Society for financial support through an Early Career Fellowship. Both authors thank an anonymous referee for their helpful comments on a previous version of the paper. \enlargethispage{3pt}

\section{Preliminaries} \label{s:prelims}

\subsection{Higman--Thompson group $V_n$} \label{ss:p_higman}

Fix $n \geq 2$ and write $[n] = \{ 0,1,\dots,n-1 \}$. Let $X_n = [n]^*$ be the set of all finite words over $[n]$, and let $\mathfrak{C}_n = [n]^{\mathbb{N}}$ be the \emph{$n$\=/ary Cantor set}, the set of all infinite sequences over $[n]$. For $u = u_1u_2 \dots u_k \in X_n$ we write $|u| = k$. For $u \in X_n$, we write $u\mathfrak{C}_n = \{ uw \mid w \in \mathfrak{C}_n \}$. A finite set $A \subseteq X_n$ is a \emph{basis} of $\mathfrak{C}_n$ if $\{ u\mathfrak{C}_n \mid u \in A \}$ is a partition of $\mathfrak{C}_n$. (Various terms appear in the literature for what we call a basis.) 

The \emph{Higman--Thompson group} $V_n$ is the group of bijections $g \in \mathrm{Sym}(\mathfrak{C}_n)$ for which there exists a \emph{basis pair}, namely a bijection $\sigma\colon A \to B$ between two bases $A$ and $B$ of $\mathfrak{C}_n$ such that $(uw)g = (u\sigma)w$ for all $u \in A$ and all $w \in \mathfrak{C}_{n}$.

The following records several important results about $V_n$, which were proved by Higman when he introduced these groups \cite{ref:Higman74}.

\begin{theorem}
Let $n \geq 2$. The following hold:
\begin{enumerate}
\item $V_n$ is a finitely presented infinite group
\item $V_n$ is simple if $n$ is even, and the derived subgroup $V_n'$ has index two and is simple if $n$ is odd
\item if $k \geq 2$, then $V_n \cong V_k$ if and only if $n=k$.
\end{enumerate} 
\end{theorem}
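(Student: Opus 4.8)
The plan is to establish the three parts using throughout the combinatorial calculus of basis pairs $\sigma \colon A \to B$, together with the basic fact that any two bases of $\mathfrak{C}_n$ admit a common refinement, reached by repeatedly applying \emph{elementary expansions} that replace a word $u$ by its $n$ children $u0, u1, \dots, u(n-1)$. For part (i), infiniteness is immediate: for each $k \equiv 1 \pmod{n-1}$ there is a basis $A$ of size $k$, and permuting its cubes realises a copy of $\mathrm{Sym}(k)$ inside $V_n$; since $k$ is unbounded, $V_n$ contains elements of unbounded order. For finite generation I would factor an arbitrary $g$, represented by $\sigma \colon A \to B$, as a change of basis carrying a fixed standard basis to $A$, a permutation of the cubes of a common basis, and a change of basis carrying $B$ back; it then suffices to observe that the leaf permutations are generated by transpositions of adjacent cubes and that the basis changes are built from elementary expansions, all of which lie in one fixed finitely generated subgroup. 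Finite presentation is the substantial part. I would either exhibit an explicit finite presentation and verify by Tietze transformations that it collapses the evident (infinite) presentation read off from the basis-pair calculus, or, more robustly, let $V_n$ act on the contractible complex of bases with finite cell stabilisers and finitely many cell orbits in each dimension, and invoke Brown's finiteness criterion to deduce that $V_n$ is of type $F_\infty$, in particular finitely presented.

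For part (ii) I would first compute the abelianisation through a sign homomorphism. Represent $g$ by a basis pair $\sigma \colon A \to B$, order $A$ and $B$ by the natural left-to-right (lexicographic) order on $X_n$, and read off the permutation $\rho$ defined by $\sigma$ relative to these orderings; set $\mathrm{sgn}(g) = \mathrm{sgn}(\rho)$. The only point to verify is invariance under an elementary expansion of the representative: expanding one cube into its $n$ order-preserving children replaces a single point of $\rho$ by a block of $n$ consecutive points, changing the number of inversions by a multiple of $n-1$. Hence when $n$ is odd the map $\mathrm{sgn} \colon V_n \to \{\pm 1\}$ is a well-defined surjective homomorphism with kernel of index two, whereas when $n$ is even the invariant collapses, which is the first sign that $V_n$ is perfect. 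For simplicity I would run the standard fragmentation argument: $V_n$ is generated by transpositions (and $\ker(\mathrm{sgn})$ by products of two transpositions), and given any nontrivial $g$ one may choose a cube $u\mathfrak{C}_n$ so small that $(u\mathfrak{C}_n)g \cap u\mathfrak{C}_n = \emptyset$; then for $h$ supported in $u\mathfrak{C}_n$ the commutator $[g,h]$ is supported in the disjoint union $u\mathfrak{C}_n \cup (u\mathfrak{C}_n)g^{-1}$, and as $h$ varies these commutators generate all transpositions (respectively all products of two transpositions). Thus the normal closure of any nontrivial element is all of $V_n'$, giving simplicity of $V_n'$; when $n$ is even, perfectness upgrades this to simplicity of $V_n$ itself.

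For part (iii) the task is to recover $n$ from the abstract isomorphism type of $V_n$; the abelianisation from (ii) already detects the parity of $n$, but a finer invariant is needed to separate, say, $V_2$ from $V_4$. The conceptually cleanest route is reconstruction: by a Rubin-type theorem any abstract isomorphism $V_n \to V_k$ is induced by a homeomorphism $\mathfrak{C}_n \to \mathfrak{C}_k$ conjugating the two actions, and the local scaling of the prefix-replacement dynamics then forces $n = k$. Alternatively, following Higman, one extracts $n$ algebraically from the basis calculus itself, via the Grothendieck-group invariant in which the decomposition of $\mathfrak{C}_n$ into its $n$ children yields the relation $(n-1)[\mathfrak{C}_n] = 0$, recovering $n-1$ and hence $n$.

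The main obstacle is the finite presentation in part (i): unlike infiniteness and finite generation, it resists a short direct manipulation, and whichever route one takes — an explicit presentation with Tietze reductions, or the topological $F_\infty$ argument — the real work lies in showing that finitely many relations suffice, equivalently that the complex of bases has the required connectivity and finiteness. Part (iii) is the next most delicate, since one must isolate a genuine isomorphism invariant rather than a property that merely happens to differ in small cases.
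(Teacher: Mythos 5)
First, a point of order: the paper does not prove this statement. It appears in Section~\ref{s:prelims} purely as background, with all three parts attributed to Higman's 1974 notes \cite{ref:Higman74}, and nothing later in the paper amounts to a proof of (i)--(iii) (Section~\ref{s:generators} only re-derives generation facts, such as $2$\=/generation, taking simplicity and finite presentation as given). So there is no proof in the paper to compare yours against; what follows is an assessment of your proposal on its own terms.

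Your outline follows the standard modern route, and parts of it are sound: the embedding of arbitrarily large symmetric groups gives infiniteness, and the sign homomorphism with invariance checked under elementary expansion (inversions change by a multiple of $n-1$, so the sign survives exactly when $n$ is odd) is correct. But there are three genuine gaps. (1)~Finite presentation, which you yourself flag as the substantial part, is never actually carried out: you name two candidate strategies (an explicit presentation reduced by Tietze moves, or Brown's criterion applied to a complex of bases), but the required connectivity and finiteness of that complex \emph{is} the content of the theorem, so as it stands part (i) is a plan, not a proof; note also that ``changes of basis'' and ``elementary expansions'' are not group elements, so even the finite-generation factorisation needs reworking. (2)~In the simplicity argument, the claim that the commutators $[g,h]$, with $h$ supported in a cube $u\mathfrak{C}_n$ moved off itself by $g$, ``generate all transpositions'' is false as stated: each such commutator is a product of $h$ with a conjugate of $h^{-1}$ supported in a disjoint cube, and is never a single transposition. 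The standard repair is the identity $[[g,h],k]=[h,k]$ for $h,k$ supported in $u\mathfrak{C}_n$, which places the derived subgroup of the copy of $V_n$ supported on $u\mathfrak{C}_n$ inside the normal closure of $g$; one then uses that $V_n'$ is generated by elements supported in single cubes (double transpositions or three\=/cycles), all conjugate into $u\mathfrak{C}_n$. Relatedly, the ``collapse'' of the sign invariant for $n$ even shows only that this particular map fails, not that $V_n$ is perfect; perfectness (and, for $n$ odd, the identification $\ker(\mathrm{sgn})=V_n'$) comes from the relation $(u \ \ v)=\prod_{a=0}^{n-1}(ua \ \ va)$, which exhibits a transposition as a product of $n$ conjugate transpositions and so kills the abelianisation when $n-1$ is odd. (3)~In part (iii), the Grothendieck\=/group relation $(n-1)[\mathfrak{C}_n]=0$ is an invariant of the \emph{action} on $\mathfrak{C}_n$, not evidently of the abstract group, so it cannot serve as an alternative to reconstruction: you need the Rubin\=/type theorem first, to turn an abstract isomorphism $V_n \to V_k$ into a conjugating homeomorphism, after which one can finish by, e.g., counting orbits of proper nonempty clopen sets (there are exactly $n-1$ of them). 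Only your first route through (iii) is viable, and it still needs the scaling argument to be made precise.
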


\subsection{Brin--Thompson group $mV$} \label{ss:p_brin}

Write $\mathfrak{C} = \mathfrak{C}_2 = \{0,1\}^{\mathbb{N}}$ and $X = X_2 = \{0,1\}^*$. Fix $m \geq 1$ and let $u = (u_1,\dots,u_m) \in X^m$. We write $|u| = \min\{ |u_i| \mid 1 \leq i \leq m \}$. For an element $w \in (w_1,\dots,w_m)$ of either $X^m$ or $\mathfrak{C}^m$, we write $uw = (u_1w_1,\dots,u_mw_m)$. In addition, we write $u\mathfrak{C}^m = \{ uw \mid w \in \mathfrak{C}^m \}$. A finite set $A \subseteq X^m$ is a \emph{basis} of $\mathfrak{C}^m$ if $\{ u\mathfrak{C}^m \mid u \in A\}$ is a partition of $\mathfrak{C}^m$. 

The \emph{Brin--Thompson group} $mV$ is the group of bijections $g \in \mathrm{Sym}(\mathfrak{C}^m)$ for which there exists a \emph{basis pair}, namely a bijection $\sigma\colon A \to B$ between two bases $A$ and $B$ of $\mathfrak{C}^m$ such that $(uw)g = (u\sigma)w$ for all $u \in A$ and all $w \in \mathfrak{C}^m$.

Part~(i) of the following was proved by Brin \cite{ref:Brin10}, and (ii) is due to Bleak and Lanoue \cite{ref:BleakLanoue10}.

\begin{theorem}
Let $m \geq 1$. The following hold:
\begin{enumerate}
\item $mV$ is a finitely presented infinite simple group
\item if $k \geq 1$, then $mV \cong kV$ if and only if $m=k$.
\end{enumerate} 
\end{theorem}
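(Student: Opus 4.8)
This theorem collects two logically independent results, proved respectively by Brin and by Bleak--Lanoue, so I would establish the two parts in turn. For part (i) I must show $mV$ is infinite, finitely presented, and simple; for part (ii) the content is the isomorphism classification.

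For the infiniteness in (i), the plan is simply to observe that any basis of $\mathfrak{C}^m$ may be refined arbitrarily, so one can build basis pairs realising elements of unboundedly large finite order, whence $mV$ is infinite. Finite presentation is the delicate point. I would fix a finite generating set — for instance the elements realising the elementary subdivision and transposition basis pairs, together with the finite symmetric group permuting the $m$ coordinates — and then analyse when two basis pairs represent the same bijection. Two basis pairs are equal as group elements precisely when they admit a common refinement, and the moves relating a basis pair to its refinements are governed by finitely many local rules; encoding these as relations among the chosen generators yields a finite presentation. This is essentially Brin's argument, and the work lies in the combinatorial bookkeeping of refinements.

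For the simplicity in (i), I would run Higman's classical commutator argument, adapted to $\mathfrak{C}^m$. Let $N \trianglelefteq mV$ contain a nontrivial $g$. Since $g \neq 1$, there is a basic open set $u\mathfrak{C}^m$ moved entirely off itself, i.e.\ $(u\mathfrak{C}^m)g \cap u\mathfrak{C}^m = \emptyset$. For any $h \in mV$ supported inside $u\mathfrak{C}^m$, the commutator $[h,g]$ lies in $N$, is supported on the disjoint union $u\mathfrak{C}^m \cup (u\mathfrak{C}^m)g$, and there encodes the action of $h$. Since $mV$ acts highly transitively on bases, conjugating such elements produces in $N$ every bijection supported on a single basic open set, and these generate $mV$; hence $N = mV$. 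The one genuinely nontrivial input, supplied by Brin via the baker's map, is that $g$ really does displace some basic open set and that the transitivity is strong enough to sweep a single small-support element across all of $\mathfrak{C}^m$.

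Finally, for (ii) the ``if'' direction is immediate, so the content is that $mV \cong kV$ forces $m=k$; equivalently, one must recover the dimension $m$ from the abstract group. The plan is to pin down an algebraic invariant encoding $m$. The natural source, exploited by Bleak--Lanoue, is the local (germ) structure of the action: at a point of $\mathfrak{C}^m$ there are $m$ independent coordinate directions, and this manifests group-theoretically in, for instance, the maximal size of a suitable family of commuting subgroups, reflecting the coordinatewise product of $m$ copies of $V$ inside $mV$. I expect this classification to be the hardest part by far: since every $mV$ is infinite, simple and finitely presented, the groups agree on all coarse invariants, so the difficulty is to exhibit a genuinely intrinsic feature sensitive to the dimension and to prove that any abstract isomorphism must respect it.
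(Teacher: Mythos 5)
You should first be aware that the paper does not prove this theorem at all: it appears in the preliminaries as quoted background, with part~(i) attributed to Brin \cite{ref:Brin10} and part~(ii) to Bleak and Lanoue \cite{ref:BleakLanoue10}, and it is used downstream as a black box. So there is no proof in the paper to compare against; your proposal has to be judged as a reconstruction of the cited literature, and as such it has genuine gaps in each of the three nontrivial components.

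Concretely: (a)~For finite presentation, the claim that the refinement moves are ``governed by finitely many local rules'' whose encoding ``yields a finite presentation'' is the statement to be proved, not an argument for it. The naive encoding of basis-pair refinements produces infinitely many relations (one for each refinement pattern), and compressing these to finitely many is the substantial content of Brin's and of Hennig--Matucci's work on presentations of $mV$; your sketch assumes exactly the hard step. (b)~For simplicity, your commutator $[h,g]$ is supported on $u\mathfrak{C}^m \cup (u\mathfrak{C}^m)g$, \emph{not} on a single basic open set, so it does not follow that conjugation yields every bijection supported on one basic open set. The standard repair is a second commutator, $[[h_1,g],h_2]=[h_1^{-1},h_2]$ for $h_1,h_2$ supported in $u\mathfrak{C}^m$, which only shows that the normal subgroup contains all \emph{commutators} of elements supported there; one then still needs perfectness of $mV$. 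That is precisely where Brin's baker's maps enter (a transposition is a product of two transpositions, so the sign homomorphism collapses and the abelianisation is trivial). They are not needed, as you suggest, to show that $g$ displaces a basic open set, which is elementary (compare Lemma~\ref{lem:incomparables_exist} in the paper). (c)~For part~(ii), you correctly identify germs as the dimension-sensitive feature, but the missing idea is Rubin's reconstruction theorem: Bleak and Lanoue use it to show that any abstract isomorphism $mV \to kV$ is induced by a homeomorphism $\mathfrak{C}^m \to \mathfrak{C}^k$ conjugating the two actions, and only after that spatial realisation do the germ groups at points distinguish $m$ from $k$. Your proposed substitute, a maximal family of commuting subgroups, is left undeveloped, and your own closing sentence concedes the real issue (``prove that any abstract isomorphism must respect it''); as written, this part is a description of the difficulty rather than a strategy for overcoming it.
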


\subsection{Thompson's group $V$} \label{ss:p_thompson}

The special case of $V = V_2 = 1V$ should shed some light on the general definitions above. We may identify the elements of $X = \{0,1\}^*$ with the vertices of the infinite binary rooted tree (where, when drawn in the plane, $0$ indicates ``go left'' and $1$ indicates ``go right''). From this viewpoint, a basis of $\mathfrak{C}$ is exactly the set of leaves of a finite binary rooted tree. Therefore, the basis pairs give the familiar \emph{tree pair} notation for elements of $V$, namely $(A, \sigma, B)$ where $A$ and $B$ are finite binary rooted trees and $\sigma$ is a bijection between the leaves of $A$ and $B$. When referring to tree pairs, we will number the leaves of the trees to manifest the bijection and we will adopt the convention that unlabelled leaves are fixed. 

Three elements $\alpha,\beta,\gamma \in V$ are represented by tree pairs in Figure~\ref{fig:tree_pairs}. To demonstrate the action of these elements on $\mathfrak{C}$, note for example that
\begin{gather*}
\overline{10}\alpha = {10}1010\ldots \alpha = {110}1010\ldots = 110\overline{10} \\
\overline{10}\beta  = {101}010\ldots \beta  = {110}010\ldots  = 110\overline{01} \\ 
\overline{10}\gamma = {1}01010\ldots \gamma = {11}01010\ldots = 11\overline{01}
\end{gather*}

\begin{figure}
  \[
  \begin{tikzpicture}[
      inner sep=0pt,
      baseline=-30pt,
      level distance=20pt,
      level 1/.style={sibling distance=60pt},
      level 2/.style={sibling distance=30pt},
      level 3/.style={sibling distance=15pt}
    ]
    \node (root) [circle,fill] {}
    child {node (0) [circle,fill] {}
      child {node (00) [circle,fill] {}
        child {node (000) {$1$}}
        child {node (001) {$2$}}}
      child {node (01) {}}}
    child {node (1) [circle,fill] {}
      child {node (10) {$3$}}
      child {node (11) [circle,fill] {}
        child {node (110) {$4$}}
        child {node (111) {$5$}}}};
  \end{tikzpicture}
  \ \xrightarrow{\,\alpha\,} \
  \begin{tikzpicture}[
      inner sep=0pt,
      baseline=-30pt,
      level distance=20pt,
      level 1/.style={sibling distance=60pt},
      level 2/.style={sibling distance=30pt},
      level 3/.style={sibling distance=15pt}
    ]
    \node (root) [circle,fill] {}
    child {node (0) [circle,fill] {}
      child {node (00) [circle,fill] {}
        child {node (000) {$2$}}
        child {node (001) {$1$}}}
      child {node (01) {}}}
    child {node (1) [circle,fill] {}
      child {node (10) {$5$}}
      child {node (11) [circle,fill] {}
        child {node (110) {$3$}}
        child {node (111) {$4$}}}};
  \end{tikzpicture}
\]
\vspace{11pt}
\[
  \begin{tikzpicture}[
        inner sep=0pt,
        baseline=-30pt,
        level distance=20pt,
        level 1/.style={sibling distance=60pt},
        level 2/.style={sibling distance=30pt},
        level 3/.style={sibling distance=15pt}
      ]
      \node (root) [circle,fill] {}
      child {node (0) [circle,fill] {}
        child {node (00) [circle,fill] {}
          child {node (000) {}}
          child {node (001) {$1$}}}
        child {node (01) [circle,fill] {}
          child {node (010) {$2$}}
          child {node (011) {$3$}}}}
      child {node (1) [circle,fill] {}
        child {node (10) [circle,fill] {}
          child {node (100) {$4$}}
          child {node (101) {$5$}}}
        child {node (11) [circle,fill] {}
          child {node (110) {$6$}}
          child {node (111) {$7$}}}};
    \end{tikzpicture}
    \ \xrightarrow{\,\beta\,} \
  \begin{tikzpicture}[
        inner sep=0pt,
        baseline=-30pt,
        level distance=20pt,
        level 1/.style={sibling distance=60pt},
        level 2/.style={sibling distance=30pt},
        level 3/.style={sibling distance=15pt}
      ]
      \node (root) [circle,fill] {}
      child {node (0) [circle,fill] {}
        child {node (00) [circle,fill] {}
          child {node (000) {}}
          child {node (001) {$7$}}}
        child {node (01) [circle,fill] {}
          child {node (010) {$1$}}
          child {node (011) {$2$}}}}
      child {node (1) [circle,fill] {}
        child {node (10) [circle,fill] {}
          child {node (100) {$3$}}
          child {node (101) {$4$}}}
        child {node (11) [circle,fill] {}
          child {node (110) {$5$}}
          child {node (111) {$6$}}}};
    \end{tikzpicture}
  \]
\vspace{11pt}
\[ 
  \begin{tikzpicture}[
        inner sep=0pt,
        baseline=-30pt,
        level distance=20pt,
        level 1/.style={sibling distance=60pt},
        level 2/.style={sibling distance=30pt}
      ]
      \node (root) [circle,fill] {}
      child {node (0) [circle,fill] {}
        child {node (00) {$1$}}
        child {node (01) {$2$}}}
      child {node (1) {$3$}};
    \end{tikzpicture}
    \ \xrightarrow{\,\gamma\,} \
  \begin{tikzpicture}[
        inner sep=0pt,
        baseline=-30pt,
        level distance=20pt,
        level 1/.style={sibling distance=60pt},
        level 2/.style={sibling distance=30pt}
      ]
      \node (root) [circle,fill] {}
      child {node (0) {$1$}}
      child {node (1) [circle,fill] {}
        child {node (10) {$2$}}
        child {node (11) {$3$}}};
    \end{tikzpicture}
  \]
\caption{Three elements of $V$ represented as tree pairs} \label{fig:tree_pairs}
\end{figure}

\subsection{Permutations} \label{ss:p_permutations}

Let $G$ be $V_n$ or $mV$. An element $g \in G$ is a \emph{permutation} if it admits a basis pair $(A,\sigma,A)$. Following Bleak and Quick \cite{ref:BleakQuick17}, in this case, we identify $g \in G$ with $\sigma \in \mathrm{Sym}(A)$. For example, the elements $\alpha, \beta \in V$ given in Figure~\ref{fig:tree_pairs} are permutations and we write
\[
\alpha = (000 \ \ 001)(10 \ \ 110 \ \ 111) \qquad \beta = (001 \ \ 010 \ \ 011 \ \ 100 \ \ 101 \ \ 110 \ \ 111).
\]
A permutation $g \in G$ is a \emph{transposition} if it admits a basis pair $(A,\sigma,A)$ where $\sigma \in \mathrm{Sym}(A)$ is a transposition. Notice that $\alpha^3 = (000 \ \ 001)$ is a transposition.

\begin{remark}
Although we do not make use of this fact, we note that the permutations are exactly the elements of $G$ of finite order. For $V_n$, this is \cite[Lemma~17]{ref:Scott92}, and the same proof applies to $mV$.
\end{remark}

\subsection{Incomparability} \label{ss:p_incomparability}

Let $(G,\Gamma,\Omega)$ be $(V_n,\mathfrak{C}_n,X_n)$ or $(mV,\mathfrak{C}^m,X^m)$. We say that $u,v \in \Omega$ are \emph{incomparable} and write $u \perp v$ if $u\Gamma$ and $v\Gamma$ are disjoint. The following two technical lemmas are straightforward.

\begin{lemma} \label{lem:incomparables_characterisation} \quad
\begin{enumerate}
\item Let $u,v \in X_n$. Then $u \perp v$ if and only if neither $u$ is a prefix of $v$ nor $v$ is a prefix of $u$.
\item Let $u=(u_1,\dots,u_m)$ and $v=(v_1,\dots,v_m)$ be elements of $X^m$. Then $u \perp v$ if and only if there exists $1 \leq i \leq m$ for which $u_i \perp v_i$.
\end{enumerate}
\end{lemma}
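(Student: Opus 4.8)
The plan is to prove each part by directly unwinding the definition of incomparability, namely that $u \perp v$ means $u\Gamma \cap v\Gamma = \emptyset$, and reducing everything to the elementary combinatorics of prefixes of finite words.

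For part (i), the key observation is that for a single word $u \in X_n$, the cylinder $u\mathfrak{C}_n$ consists precisely of those infinite sequences beginning with $u$. First I would establish the contrapositive direction: if one of $u,v$ is a prefix of the other, say $u$ is a prefix of $v$, then writing $v = uw$ for some word $w$, every sequence beginning with $v$ also begins with $u$, so $v\mathfrak{C}_n \subseteq u\mathfrak{C}_n$ and the cylinders meet (they are nonempty since $\mathfrak{C}_n$ is). Conversely, if neither is a prefix of the other, I would show $u\mathfrak{C}_n$ and $v\mathfrak{C}_n$ are disjoint: any common element $x$ would have to begin with both $u$ and $v$, and comparing the first $\min\{|u|,|v|\}$ symbols of $x$ forces the shorter of $u,v$ to be a prefix of the longer, a contradiction. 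This handles (i) completely.

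For part (ii), I would build on (i) componentwise. Here a point of $\mathfrak{C}^m$ is an $m$\=/tuple of infinite sequences, and $u\mathfrak{C}^m = u_1\mathfrak{C} \times \cdots \times u_m\mathfrak{C}$, so the product cylinder is empty in the $i$\=/th coordinate exactly when $u_i\mathfrak{C} \cap v_i\mathfrak{C} = \emptyset$. The crucial set\=/theoretic fact is that a product of sets is disjoint from another product if and only if they are disjoint in at least one coordinate; concretely,
\[
(u_1\mathfrak{C} \times \cdots \times u_m\mathfrak{C}) \cap (v_1\mathfrak{C} \times \cdots \times v_m\mathfrak{C}) = (u_1\mathfrak{C} \cap v_1\mathfrak{C}) \times \cdots \times (u_m\mathfrak{C} \cap v_m\mathfrak{C}),
\]
and a product of nonempty sets is empty if and only if one of the factors is empty. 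Thus $u \perp v$ holds if and only if $u_i\mathfrak{C} \cap v_i\mathfrak{C} = \emptyset$ for some $i$, which by part (i) (applied with $n=2$) is exactly the condition that $u_i \perp v_i$ for some $i$.

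Both parts are genuinely routine, so I do not expect a serious obstacle; the statement is explicitly labelled ``straightforward.'' The only mild care needed is to keep the logic of the quantifiers straight in part (ii): incomparability of the tuples corresponds to an \emph{existential} statement over coordinates (disjointness in \emph{one} coordinate suffices), not a universal one, which is the natural consequence of disjointness of a Cartesian product. I would make sure to state and justify the product\=/intersection identity cleanly so that this existential quantifier emerges transparently.
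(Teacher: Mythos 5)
Your proof is correct: the prefix argument for (i) and the coordinatewise product--intersection identity for (ii) are exactly the routine verifications required. The paper itself offers no proof of this lemma (it is explicitly dismissed as straightforward), so there is no authorial argument to compare against; the only cosmetic remark is that your final appeal to part (i) inside part (ii) is unnecessary, since $u_i \perp v_i$ is \emph{by definition} the disjointness $u_i\mathfrak{C} \cap v_i\mathfrak{C} = \emptyset$, no prefix characterisation needed.
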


\begin{lemma} \label{lem:incomparables_extend}
For any set $A$ of pairwise incomparable elements of $\Omega$, there exists a basis $B$ such that $A \subseteq B$.
\end{lemma}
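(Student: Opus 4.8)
The plan is to prove both cases, $(V_n,\mathfrak{C}_n,X_n)$ and $(mV,\mathfrak{C}^m,X^m)$, by one uniform ``truncate and collapse'' argument. First I would record that, since a basis is by definition finite, the statement is really about finite antichains: a set $A$ of pairwise incomparable elements whose cylinders we can fit inside a finite partition must itself be finite, so I assume throughout that $A$ is finite (the case $A=\varnothing$ being trivial, taking $B$ to be a single full cell).

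The first step is to fix a uniform truncation depth. Let $\ell$ be the largest length occurring in $A$: the largest $|u|$ for $u\in A$ in the $V_n$ case, and the largest $|u_i|$ over all $u=(u_1,\dots,u_m)\in A$ and all $i$ in the $mV$ case. Let $C$ be the ``full'' basis at depth $\ell$, namely all words of length exactly $\ell$ for $V_n$, or all $m$\=/tuples of such words for $mV$; this is immediately a basis. I would then classify each $w\in C$ by its relationship to $A$. Because every coordinate appearing in $A$ has length at most $\ell=|w|$ (coordinatewise), no element of $A$ can be a proper extension of $w$, so Lemma~\ref{lem:incomparables_characterisation} gives, for each fixed $u\in A$, the dichotomy that either $w\perp u$ or $u$ lies (coordinatewise) below $w$. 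Hence each $w\in C$ is either incomparable to every element of $A$ or lies above at least one $u\in A$; and this $u$ is \emph{unique}, since if $w$ were above two elements $u,u'\in A$ then in every coordinate $u_i$ and $u_i'$ would both be prefixes of $w_i$, hence comparable, forcing $u\not\perp u'$ by Lemma~\ref{lem:incomparables_characterisation} and so $u=u'$ by the antichain hypothesis. This uniqueness is the one place the hypothesis on $A$ is genuinely used.

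Now set $B_0=\{w\in C : w\perp u \text{ for all } u\in A\}$ and $B=A\cup B_0$, and verify $B$ is the desired basis. The elements of $B$ are pairwise incomparable: $A$ is an antichain by hypothesis; distinct elements of $B_0\subseteq C$ are incomparable because they have equal (coordinatewise) length and differ; and each element of $B_0$ is incomparable to each element of $A$ by construction. For covering, given a point of $\Gamma$, truncate it coordinatewise to depth $\ell$ to land in a unique $w\in C$; if $w\in B_0$ the point lies in $w\Gamma$, and otherwise $w$ lies above some $u\in A$ and the point lies in $u\Gamma$. Since pairwise incomparability is exactly pairwise disjointness of cylinders (Lemma~\ref{lem:incomparables_characterisation}), the covering set $B$ is automatically a partition, hence a basis; it is finite as $B\subseteq A\cup C$, and $A\subseteq B$.

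The only step needing real care, rather than routine bookkeeping, is the $mV$ case, where ``lies above'' is coordinatewise while incomparability is witnessed by a \emph{single} coordinate. There I would check that the cells of $C$ lying above a fixed $u$ tile $u\Gamma$ exactly, which reduces coordinate by coordinate to the elementary fact that the length\=/$\ell$ extensions of $u_i$ tile $u_i\mathfrak{C}$, and that this product structure does not disturb the uniqueness\=/of\=/$u$ claim above. Everything else is a direct verification, which is why the authors describe the lemma as straightforward.
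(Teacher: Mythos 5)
Your proof is correct. Note that the paper never actually writes a proof of this lemma: it is one of the ``two technical lemmas'' the authors simply declare straightforward, so there is no argument of theirs to compare against, and your truncate-and-collapse construction is exactly the kind of routine verification they are gesturing at. Every step checks out: the depth\==$\ell$ dichotomy (for $w$ of full depth, either $w \perp u$ or $u$ lies coordinatewise below $w$, by Lemma~\ref{lem:incomparables_characterisation}), the pairwise incomparability of $B = A \cup B_0$, and the covering argument via coordinatewise truncation. Two minor remarks. First, the finiteness caveat you raise is genuine and worth stating: since a basis is finite by definition, the lemma as literally phrased fails for infinite antichains (e.g.\ $\{0,10,110,1110,\dots\}$ in $X_2$), so the implicit hypothesis is that $A$ is finite --- harmless here, as the paper only applies the lemma with $A = \{u, ug\}$. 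Second, the uniqueness-of-$u$ claim, while true, is never needed: the covering argument only requires that \emph{some} $u \in A$ lies below $w$, and disjointness of the cylinders of $B$ comes directly from the antichain hypothesis together with the definition of $B_0$; so your proof could be trimmed by deleting that paragraph, and the claim that uniqueness is ``the one place the hypothesis on $A$ is genuinely used'' is slightly off --- the hypothesis is used, more plainly, to make the cylinders of $A$ itself pairwise disjoint.
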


\begin{lemma} \label{lem:incomparables_exist}
Let $g \in G$ be nontrivial. Then there exists $u \in \Omega$ such that $u \perp ug$.
\end{lemma}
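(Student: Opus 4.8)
The plan is to exploit the local structure of $g$: near any point it acts simply by replacing a finite prefix, so it carries sufficiently small cones onto cones, and the whole statement reduces to separating a single moved point from its image by disjoint cones.

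First I would use $g \neq 1$ to fix a point $x \in \Gamma$ with $xg \neq x$, and fix a basis pair $(A,\sigma,B)$ realising $g$. Since $A$ is a basis, exactly one $a \in A$ is a prefix of $x$ (for $mV$, a prefix in every coordinate), and writing $x = at$ we have that $g$ sends $a\Gamma$ onto $(a\sigma)\Gamma$ by prefix replacement. Consequently, for any $u$ lying below $a$, say $u = as$, the element $g$ maps the cone $u\Gamma$ onto the cone $(a\sigma)s\,\Gamma$; I would write $ug := (a\sigma)s$, so that $ug$ is a genuine element of $\Omega$ with $g(u\Gamma) = (ug)\Gamma$. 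This is the sense in which $ug$ is to be read in the statement.

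Next I would choose $u$ to be a sufficiently deep prefix of $x$ lying below $a$. Since $x \neq xg$, these two sequences first disagree at some position $k$ (for $mV$, they disagree in some coordinate $i$ at some position $k$, detected by Lemma~\ref{lem:incomparables_characterisation}(ii)). I take $u$ to be a prefix of $x$ below $a$, chosen deep enough that the relevant coordinate of both $u$ and $ug$ has length at least $k$. Then $u$ is a prefix of $x$ and $ug$ is a prefix of $xg$, and both resolve position $k$, where $x$ and $xg$ disagree; hence neither $u$ nor $ug$ is a prefix of the other, and so $u \perp ug$ by Lemma~\ref{lem:incomparables_characterisation}.

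The two cases $V_n$ and $mV$ run in parallel, the only difference being that for $mV$ prefixes and first disagreements are read coordinatewise and incomparability is witnessed in a single coordinate. The one genuine obstacle to watch is that $g$ need not preserve word length, since $a$ and $a\sigma$ may sit at different depths; so I must not merely take $u$ of length $k$, but rather deep enough that \emph{both} $u$ and its image $ug$ extend past the disagreement position $k$. This is harmless because $g$ alters only a bounded prefix, so lengthening $u$ lengthens $ug$ correspondingly; once the depth is chosen with this in mind, the separation is immediate.
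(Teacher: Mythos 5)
Your proof is correct, and it takes a genuinely different (though related) route from the paper's. Both arguments work with a basis pair $(A,\sigma,B)$ and both implicitly interpret $ug$, for a finite word $u$ lying below a basis element, as the finite word with $(u\Gamma)g = (ug)\Gamma$ --- a convention you make commendably explicit. The difference is in how incomparability is forced. The paper stays entirely inside $\Omega$: it picks a basis element $u \in A$ with $u\sigma \neq u$, and if $u$ and $u\sigma$ are comparable, say $u_i$ a proper prefix of $(u\sigma)_i = u_i a w$, it performs a one-letter surgery, appending a letter $b \neq a$ to the $i$th coordinate of $u$; then $u'$ and $u'g$ disagree at position $|u_i|+1$ of coordinate $i$, so neither is a prefix of the other. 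Your argument instead starts from a moved point $x \in \Gamma$ and descends: taking $u$ a sufficiently deep prefix of $x$ below the covering basis element, both $u$ and $ug$ (prefixes of $x$ and $xg$ respectively) reach the first position where $x$ and $xg$ disagree, so they are incomparable by Lemma~\ref{lem:incomparables_characterisation}. Your version buys uniformity: there is no case split (no ``already incomparable'' case, no ``which word is a prefix of which''), and the mechanism --- separating a moved point from its image by disjoint cones --- is conceptually dynamical and transparent. The paper's version buys economy: it never mentions points of Cantor space or first-disagreement positions, and the witness it produces is at most one letter deeper than a basis element, making it the more compact combinatorial argument. You also correctly flag and dispose of the one real pitfall in your approach, namely that $g$ need not preserve length, so the depth of $u$ must be chosen so that \emph{both} $u$ and $ug$ pass the disagreement position; since lengthening the suffix lengthens both words in step, this is indeed harmless.
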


\begin{proof}
We assume that $G = mV$ since the proof is similar and easier when $G = V_n$. Let $(A,\sigma,B)$ be a basis pair for $g$ and let $u = (u_1,\dots,u_m) \in A$ such that $u \neq ug$. Write $ug = v = (v_1,\dots,v_m)$. If $u \perp v$, then the proof is complete. Therefore, assume that $u$ and $v$ are not incomparable. Consequently, there exists $1 \leq i \leq m$ such that either $u_i$ is a proper prefix of $v_i$ or $v_i$ is a proper prefix of $u_i$ (otherwise $u=v$). We will assume that $u_i$ is a proper prefix of $v_i$; the proof works in the other case making the obvious changes. Write $v_i = u_iaw$ for $a \in \{0,1\}$ and $w \in \{0,1\}^*$. Let $b\in\{0,1\}$ be such that $a\neq b$. Define $u' = (u_1,\dots,u_i b,\dots,u_m)$. Now the $i$th coordinate of $u'g$ satisfies $(u'g)_i = v_i b = u_iaw b$. Since $|(u'g)_i| > |u_ia| = |u'_i|$ we know that $(u'g)_i$ is not a prefix of $u'_i$, and visibly $u'_i = u_i b$ is not a prefix of $(u'g)_i = u_iaw b$. Therefore, $u'$ and $u'g$ are incomparable, as required.
\end{proof}

\subsection{Convenient shorthand} \label{ss:p_notation}

Let $(G,\Gamma,\Omega)$ be $(V_n,\mathfrak{C}_n,X_n)$ or $(mV,\mathfrak{C}^m,X^m)$. For $g \in G$ and $u \in \Omega$, define an element $g_{[u]} \in G$ as follows: for each $w \in \Gamma$, 
\[
w g_{[u]} = \left\{
\begin{array}{ll}
u (v g)   & \text{if $w=uv$} \\
w         & \text{if $w \not\in u\Gamma$.} \\
\end{array}
\right.
\] 
Write 
\[
G_{[u]} = \{ g_{[u]} \mid g \in G \}, 
\]
noting that $G_{[u]}$ acts on $u\Gamma$ and $G_{[u]} \cong G$. Observe that if $g = (v_{11} \ \ \cdots \ \ v_{1k_1}) \cdots (v_{d1} \ \ \cdots \ \ v_{dk_d})$, then 
\[
g_{[u]} = (uv_{11} \ \ \cdots \ \ uv_{1k_1}) \cdots (uv_{d1} \ \ \cdots \ \ uv_{dk_d}).
\]

\subsection{Bertrand's postulate} \label{ss:p_bertrand}

In 1852, Chebyshev proved Bertrand's postulate: for all integers $k > 3$, there exists a prime number $p$ such that $k < p < 2k$. Indeed, by the Prime Number Theorem, for all $\varepsilon > 0$, there exists $k_0 > 0$ such that for all $k > k_0$, there exists a prime number $p$ such that $k < p < (1+\varepsilon)k$. We will make use of the following useful non-asymptotic bound due to Nagura \cite{ref:Nagura52}.

\begin{theorem} \label{thm:bertand}
Let $k \geq 25$. Then there exists a prime $p$ such that $k < p < \frac{6}{5}k$.
\end{theorem}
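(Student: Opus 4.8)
The plan is to reduce the claim to explicit two-sided bounds on the Chebyshev function $\theta(x) = \sum_{p \leq x} \log p$, where the sum runs over primes $p$. If I can produce constants $A, B$ and a threshold $x_0$ with $Ax < \theta(x) < Bx$ for all $x \geq x_0$, subject to the crucial relation $B/A < \frac{6}{5}$, then for every $k \geq x_0$ I obtain
\[
\theta\!\left(\tfrac{6}{5}k\right) - \theta(k) > A \cdot \tfrac{6}{5}k - Bk = \left(\tfrac{6A}{5} - B\right)k > 0,
\]
so the interval $(k, \frac{6}{5}k)$ must contain a prime, namely one contributing to this positive difference. The remaining finitely many values $25 \leq k < x_0$ are then disposed of by checking directly that $(k,\frac{6}{5}k)$ contains a prime in each case, a finite computation.

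To obtain the bounds on $\theta$ I would use Chebyshev's classical method. Writing $T(x) = \sum_{n \leq x} \log n = \log \lfloor x \rfloor!$ and $\psi(x) = \sum_{p^r \leq x} \log p$, one has the identity $T(x) = \sum_{n \geq 1} \psi(x/n)$. Forming a suitable finite linear combination of the $T(x/j)$ — for instance Chebyshev's combination with the periodic coefficient pattern attached to $T(x) - T(x/2) - T(x/3) - T(x/5) + T(x/30)$ — telescopes the right-hand side into a tractable alternating expression in $\psi$, while the left-hand side is estimated by Stirling's formula. This yields explicit inequalities of the shape $A'x < \psi(x) < B'x$, which I then transfer to $\theta$ by controlling the prime-power contribution $\psi(x) - \theta(x) = \sum_{r \geq 2} \theta(x^{1/r})$, an error of size $O(\sqrt{x}\log x)$ that is relatively negligible for large $x$.

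The main obstacle is quantitative sharpness. The target ratio is exactly $\frac{6}{5} = 1.2$, whereas the constants emerging from the raw Chebyshev combination give $B'/A'$ only marginally below $1.2$; Chebyshev's own bounds yield a ratio of order $1.201$, on the wrong side of the threshold. Thus the delicate work lies in choosing (and, if necessary, iterating or refining) the linear combination so that the effective ratio $B/A$ drops safely under $\frac{6}{5}$, and in pinning down $x_0$ explicitly and small enough that the residual finite check begins no later than $25$. An alternative, more elementary engine would be an Erdős-style analysis of the prime factorisation of a binomial coefficient such as $\binom{6k/5}{k}$, isolating the contribution of primes in $(k, \frac{6}{5}k)$; this avoids Stirling-level input but requires the same careful constant-chasing to beat the $1.2$ barrier. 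In either route the asymptotic safety is guaranteed by the Prime Number Theorem, as the excerpt notes, and the entire difficulty is making the constants effective all the way down to $k = 25$.
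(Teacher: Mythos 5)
Your overall framework is sound --- reduce the statement to explicit two-sided bounds $Ax < \theta(x) < Bx$ valid for $x \geq x_0$ with $B/A < \tfrac{6}{5}$, then clear the range $25 \leq k < x_0$ by direct inspection --- and this is indeed the shape of the argument in the literature. But note first that the paper does not prove this theorem at all: it is stated as a result of Nagura and the citation to his 1952 paper is the ``proof''. Measured as a self-contained proof, your proposal has a genuine gap, and you have located it yourself: you never produce constants $A$, $B$ with $B/A$ strictly below $\tfrac{6}{5}$, nor an explicit threshold $x_0$ from which the finite check could start. This is not deferrable constant-chasing; it is the entire content of the theorem. Worse, the specific engine you name cannot supply it: the classical combination $T(x) - T(x/2) - T(x/3) - T(x/5) + T(x/30)$ gives $\psi(x) \geq Ax + O(\log x)$ and, after the unravelling step $\psi(x) \leq U(x) + \psi(x/6)$, only $\psi(x) \leq \tfrac{6}{5}Ax + O(\log^2 x)$, with $A = 0.92129\ldots$; the asymptotic ratio is \emph{exactly} $\tfrac{6}{5}$, not merely ``marginally above'' it, so no care with the error terms or with the $\psi$-to-$\theta$ transfer can ever push it strictly under $\tfrac{6}{5}$. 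The method as described fails in principle, not just in the constants.

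What is missing is a genuinely sharper device, and that is precisely Nagura's contribution: he works with a more elaborate Gamma-function/Chebyshev combination built on the modulus $2\cdot 3\cdot 5\cdot 7 = 210$ rather than $30$, obtaining bounds roughly of the form $\theta(x) > 0.916\,x$ and $\psi(x) < 1.086\,x$ in explicit ranges, whose ratio is safely below $1.2$, and then completes the finite verification down to $k = 25$. Your Erd\H{o}s-style alternative via $\binom{6k/5}{k}$ runs into the same barrier (the central binomial argument natively proves ratio $2$, and squeezing it to $\tfrac{6}{5}$ requires exactly the kind of refinement you have not carried out). To repair the proposal you must either import such a refined combination --- at which point you are reproducing Nagura's paper --- or simply cite \cite{ref:Nagura52}, as the authors do.
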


\subsection{Generating symmetric and alternating groups} \label{ss:p_generating}

The following is \cite[Lemma~2]{ref:Mason77}; our proof is based on, but shorter than, the original. Here, and throughout, for group elements $x$ and $y$ we write $x^y = y^{-1}xy$ and $[x,y] = x^{-1}y^{-1}xy$.

\begin{lemma} \label{lem:permutations_fact}
Let $n \geq 7$ and $1 < a \leq b < n$. Then $A_n \leq \< (1 \ \ 2 \ \ \cdots \ \ b), (a \ \ a+1 \ \ \cdots \ \ n) \>$.
\end{lemma}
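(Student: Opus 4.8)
Write $\sigma = (1\ 2\ \cdots\ b)$ and $\tau = (a\ a+1\ \cdots\ n)$, and set $H = \<\sigma,\tau\>$; the goal is to show $A_n \leq H$. The plan is to prove that $H$ is primitive and contains a $3$-cycle, and then to invoke Jordan's classical theorem that a primitive group of degree $n$ containing a $3$-cycle contains $A_n$. Transitivity of $H$ is immediate: the supports $\{1,\dots,b\}$ of $\sigma$ and $\{a,\dots,n\}$ of $\tau$ overlap (as $a \leq b$) and cover $\{1,\dots,n\}$, so the two cycles already fuse all points into a single orbit.

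For primitivity I would in fact establish $2$-transitivity by showing that the point stabiliser $H_1$ is transitive on $\{2,\dots,n\}$. Since $a > 1$, the cycle $\tau$ fixes $1$, so $\tau \in H_1$ and $H_1$ is transitive on $\{a,\dots,n\}$. To reach the points $2,\dots,a-1$, I would use the conjugates $\tau^{\sigma^{-k}} = (a{-}k\ \cdots\ b{-}k\ b{+}1\ \cdots\ n)$ for $1 \leq k \leq a-2$: conjugating by $\sigma^{-k}$ shifts the initial segment $a,\dots,b$ of the support down by $k$ while fixing the tail $b+1,\dots,n$, and $1$ stays outside the support as long as $a-k \geq 2$. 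All these cycles contain the fixed tail (nonempty since $b<n$), so they lie in one $H_1$-orbit together with $\tau$ and collectively reach every point down to $2$; hence $H_1$ is transitive on $\{2,\dots,n\}$ and $H$ is $2$-transitive, in particular primitive. (When $a = 2$ this is already given by $\tau$ alone.)

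To produce a $3$-cycle I would compute the commutator $[\sigma,\tau] = \sigma^{-1}\sigma^{\tau}$, where $\sigma^{\tau} = (1\ \cdots\ a{-}1\ a{+}1\ \cdots\ b{+}1)$ is the $b$-cycle obtained by pushing the overlap of the two supports up by one step. A direct check gives
\[
[\sigma,\tau] = (1\ \ b+1)(a\ \ a+1),
\]
which, crucially, collapses to the $3$-cycle $(1\ \ a\ \ a+1)$ precisely when $a = b$ (the two transpositions then share the point $a+1$). This settles the case $a = b$.

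The main obstacle is the case $a < b$. Now $[\sigma,\tau]$ is a genuine double transposition whose two $2$-cycles are disjoint, and they stay disjoint under conjugation by $\sigma$ and $\tau$: every commutator of this shape is an even permutation moving four well-separated points, so no short product of the available elements is directly a $3$-cycle. I see two candidate routes. The first is to extract a $3$-cycle from a product such as $\sigma\tau$: in examples this element has a cycle of length $3$ whose length is coprime to the lengths of its other cycles, so a suitable power isolates a $3$-cycle, and the difficulty is making the cycle-type analysis uniform across all $a,b,n$. The second route is to strengthen the transitivity argument above to the point where a conjugate of $[\sigma,\tau]$ can be made to share exactly one of its two $2$-cycles with $[\sigma,\tau]$ while the remaining two $2$-cycles overlap in a single point, so that their product cancels the shared $2$-cycle and leaves a lone $3$-cycle. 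Either way, once a $3$-cycle lies in $H$, Jordan's theorem gives $A_n \leq H$, completing the proof. I expect this extraction, together with the careful handling of the degenerate boundary cases $b = a+1$ and $b+1 = n$ where the formulas for $\sigma^{\tau}$ and the conjugates $\tau^{\sigma^{-k}}$ degenerate, to be the most delicate part of the argument.
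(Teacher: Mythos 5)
Your setup is sound and, up to symmetry, identical to the paper's: the paper also proves transitivity, then $2$\=/transitivity via conjugates of one cycle by powers of the other (it stabilises $n$ using $\sigma^{\tau^j}$ where you stabilise $1$ using $\tau^{\sigma^{-k}}$), and its commutator computation agrees with yours, giving $(1 \ \ b{+}1)(a \ \ a{+}1)$ when $a<b$ and the $3$\=/cycle $(1 \ \ a \ \ a{+}1)$ when $a=b$. But the case $a<b$, which you yourself flag as an obstacle, is a genuine gap and not a delicate finishing touch: neither of your candidate routes is carried out. The first (powering $\sigma\tau$ to isolate a $3$\=/cycle) comes with no uniform cycle\-/type analysis; the second requires an element of $H$ fixing $\{1,b{+}1\}$ setwise while carrying $\{a,a{+}1\}$ to a pair meeting it in exactly one point, and $2$\=/transitivity supplies nothing of the sort. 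As written, your proof is complete only when $a=b$.

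The paper closes this case by never asking for a $3$\=/cycle at all: it invokes the stronger Jordan\-/type result recorded in \cite[Example~3.3.1]{ref:DixonMortimer96}, that a primitive group of degree $n \geq 9$ containing a nontrivial element whose support has size at most $4$ must contain $A_n$, and this applies directly to the double transposition $(1 \ \ b{+}1)(a \ \ a{+}1)$. Accordingly, the paper runs the argument under the extra hypothesis $n \geq 9$ and checks $n \in \{7,8\}$ by computer (in \textsc{Magma} \cite{ref:Magma}). The degree restriction is not cosmetic, and it shows your gap is real: $\mathrm{PSL}(3,2)$ in its action of degree $7$ and $\mathrm{AGL}(3,2)$ in its action of degree $8$ are primitive, contain double transpositions, and do not contain $A_7$ or $A_8$ respectively. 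So for $n \in \{7,8\}$ and $a<b$ no criterion of the form ``primitive plus an element of support $4$'' can work, and any hand construction of a $3$\=/cycle from $\sigma$ and $\tau$ would have to exploit more than the structure you have established. To repair your proof, either cite (or prove) the support\-/$\leq 4$ theorem and handle $n\in\{7,8\}$ separately, as the paper does, or genuinely construct the $3$\=/cycle in the case $a<b$ --- which is precisely the step the paper found easier to sidestep.
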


\begin{proof}
To simplify the details of the proof, let us assume that $n \geq 9$; the result can be easily verified (for example, in \textsc{Magma} \cite{ref:Magma}) when $n \in \{7,8\}$. Write $x = (1 \ \ 2 \ \ \cdots \ \ b)$ and $y = (a \ \ a+1 \ \ \cdots \ \ n)$. Since $b \geq a$ the subgroup $\<x,y\> \leq S_n$ is transitive. For all $0 < j < n-b$, the element $x^{y^j} = (1 \ \ \cdots \ \ a-1 \ \ a+j \ \ \cdots \ \ b+j)$ fixes $n$, so the stabiliser of $n$ in $\<x,y\>$ is transitive. Therefore, $\<x,y\>$ is $2$\=/transitive. Now $[x,y]$ is $(1 \ \ b+1)(a \ \ a+1)$ if $a < b$ and $(1 \ \ a \ \ a+1)$ if $a=b$. Therefore, $\<x,y\>$ is a primitive group of degree $n \geq 9$ containing an element supported on at most four points, so $\<x,y\> \geq A_n$ (see \cite[Example~3.3.1]{ref:DixonMortimer96}). 
\end{proof}

\section{Generating Sets} \label{s:generators}

This section is dedicated to recording important generating sets of the Higman--Thompson and Brin--Thompson groups. To aid the reader, let us first comment on the prototype $V$.

\subsection{Thompson's group} \label{ss:generators_v}

As Brin records in \cite[Lemma~12.2]{ref:Brin04}
\begin{equation}
V = \< \{ (u \ \ v) \mid \text{$u,v \in X$ and $u \perp v$} \} \>. \label{eq:transpositions}
\end{equation}
Indeed he uses this observation to provide a concise proof of the simplicity of $V$ analogous to the usual proof of the simplicity of the alternating group. In \cite[Theorem~1.1]{ref:BleakQuick17}, Bleak and Quick obtain an elegant presentation for $V$ whose generators are the transpositions, which reflects both the Coxeter presentation of the symmetric group but also the self-similarity that $V$ inherits from the Cantor set. 

Let $X^{(3)} = \{ 000, 001, 010, 011, 100, 101, 110, 111 \}$ and let $S^{(3)} = \mathrm{Sym}(X^{(3)}) \cong S_8$. We will see in Proposition~\ref{prop:delta_symmetric} that
\begin{equation}
V = \< (10 \ \ 110 \ \ 111), S^{(3)}\>. \label{eq:delta_symmetric}
\end{equation}
(It is not difficult to derive the generating set in \eqref{eq:delta_symmetric} from a similar one given in \cite{ref:Mason77}.) Now
\[
S^{(3)} = \< (000 \ \ 001), (001 \ \ 010 \ \ 011 \ \ 100 \ \ 101 \ \ 110 \ \ 111) \>.
\] 
Therefore, since $(000 \ \ 001)$ and $(10 \ \ 110 \ \ 111)$ have coprime orders and commute, we conclude that 
\begin{equation}
V = \< \alpha, \beta\> \label{eq:alpha_beta}
\end{equation} 
where $\alpha = (000 \ \ 001)(10 \ \ 110 \ \ 111)$ and $\beta = (001 \ \ 010 \ \ 011 \ \ 100 \ \ 101 \ \ 110 \ \ 111)$ (see Figure~\ref{fig:tree_pairs}).

In the rest of this section, we show that $V_n$ and $mV$ have generating sets that reminiscent of those in \eqref{eq:transpositions}--\eqref{eq:alpha_beta}. These will play an important role in our proofs in Section~\ref{s:proofs}.

\subsection{Higman--Thompson groups} \label{ss:generators_higman}

Fix $n \geq 2$. The first result extends Brin's observation that $V$ is generated by transpositions.

\begin{lemma} \label{lem:transpositions}
The group $V_n$ is generated by the set of transpositions.
\end{lemma}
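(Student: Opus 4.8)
The plan is to set $H = \langle\, (u\ \ v) \mid u,v\in X_n,\ u\perp v \,\rangle \leq V_n$, the subgroup generated by the transpositions, and to prove $H = V_n$ in two stages: first that $H$ contains every permutation, and then that every element of $V_n$ is a product of permutations.

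For the first stage, suppose $g\in V_n$ is a permutation, with basis pair $(A,\sigma,A)$. Since $\{u\mathfrak{C}_n \mid u\in A\}$ is a partition, distinct elements of $A$ have disjoint cones and so are pairwise incomparable by Lemma~\ref{lem:incomparables_characterisation}(i). Writing $\sigma\in\mathrm{Sym}(A)$ as a product of transpositions $(u\ \ v)$ with $u,v\in A$, each factor is then a transposition of $V_n$, so $g\in H$; hence $H$ contains all permutations.

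For the second stage, I would take $g\in V_n$ with basis pair $\sigma\colon A\to B$ and first normalise: choosing $\ell\geq\max_{u\in A}|u|$ and expanding every cone of $A$ down to level $\ell$, we may assume $A=[n]^\ell$ is the complete basis of all length\=/$\ell$ words (the range basis $B$ is refined accordingly, but need not be complete). I would then peel off the behaviour one level at a time using the self-similar copies $(V_n)_{[u]}\cong V_n$ from Section~\ref{ss:p_notation}: the induced permutation of the top\=/level cones lies in $H$ by the first stage, and after multiplying $g$ by a suitable permutation the residual map should restrict to each cone $u\mathfrak{C}_n$ as an element of $(V_n)_{[u]}$ of strictly smaller complexity, which lies in $H$ by induction. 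Running this induction on $\ell$ (or on $\sum_{v\in B}|v|$) then places $g$ in $H$.

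The main obstacle is precisely this second stage, and specifically the fact that $B$ is genuinely a different basis from $A$: the ``shape'' of the range tree is an invariant that multiplying on either side by a single permutation cannot change, so a non\=/permutation (indeed an infinite\=/order element such as $\gamma$ in Figure~\ref{fig:tree_pairs}) can only be built as a telescoping product of permutations whose underlying bases differ. Arranging that the complexity measure strictly decreases under the reduction --- equivalently, showing that one can always align a block of sibling cones of $B$ using transpositions so as to contract the basis pair --- is the delicate point, and the freedom to refine basis pairs together with Lemma~\ref{lem:incomparables_extend} is what I expect to make it go through. The case $n=2$ is Brin's observation \eqref{eq:transpositions}, and the argument above is designed so that the same reasoning applies uniformly for all $n\geq 2$.
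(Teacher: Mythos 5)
Your first stage is correct (distinct elements of a basis are pairwise incomparable, so a permutation $(A,\sigma,A)$ factors into transpositions of $V_n$ exactly as $\sigma$ does in $\mathrm{Sym}(A)$), and it is worth noting that the paper itself offers no proof of this lemma at all --- it simply cites the proof of \cite[Theorem~8.7]{ref:GarncarekLazarovich18}. But your second stage, which you rightly identify as the entire content of the lemma, has a genuine gap, and the reduction you propose is not merely ``delicate'': it is impossible. Take $g = \gamma$ from Figure~\ref{fig:tree_pairs}, so that $(0\mathfrak{C})\gamma = 0\mathfrak{C} \cup 10\mathfrak{C}$ and $(1\mathfrak{C})\gamma = 11\mathfrak{C}$. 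Suppose $\pi$ were a permutation, with basis pair $(C,\tau,C)$, such that $\gamma\pi$ preserved both top-level cones; then $\pi$ would map $0\mathfrak{C}\cup 10\mathfrak{C}$ onto $0\mathfrak{C}$. Replacing $C$ by $\{cx \mid c \in C,\ x \in \{0,1\}^2\}$ (with respect to which $\pi$ is still a permutation), we may assume every element of $C$ has length at least $2$, so $0\mathfrak{C}$, $10\mathfrak{C}$ and $11\mathfrak{C}$ are each unions of $C$\=/cones. Since $\pi$ permutes the set of $C$\=/cones, it sends a union of $j$ distinct $C$\=/cones to a union of $j$ distinct $C$\=/cones; but $0\mathfrak{C}\cup 10\mathfrak{C}$ is a union of $|C \cap 0\mathfrak{C}| + |C\cap 10\mathfrak{C}|$ of them and $0\mathfrak{C}$ a union of only $|C\cap 0\mathfrak{C}|$, a contradiction. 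The same count rules out multiplying on the left (note $(0\mathfrak{C})\gamma^{-1} = 00\mathfrak{C}$). So for this $g$ --- an element appearing in the paper's own Figure~\ref{fig:tree_pairs} --- your induction cannot even begin, for any choice of refinement: the obstruction you yourself describe (``the shape of the range tree'') is fatal to the plan rather than something Lemma~\ref{lem:incomparables_extend} circumvents.

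What actually resolves the mismatch is composing with permutations adapted to the \emph{range} basis of $g$, so that the transpositions appearing in the factorisation live on several different bases. For $\gamma$ one checks directly that $\gamma \cdot (0 \ \ 10 \ \ 11) = (0 \ \ 1)$, whence $\gamma = (0 \ \ 1)(11 \ \ 10)(0 \ \ 11)$, a product of three transpositions. The clean general argument (in essence the cited one) is an induction on $k = |A| = |B|$ for a basis pair $(A,\sigma,B)$, using your stage 1: if $k>1$, each of $A$ and $B$ contains a full sibling family $\{u0,\dots,u(n-1)\} \subseteq A$ and $\{v0,\dots,v(n-1)\} \subseteq B$; collapsing these gives bases $A'$, $B'$ of size $k-(n-1)$; by induction some element with basis pair $(A',\cdot,B')$ lies in the subgroup generated by permutations, and after adjusting by a permutation of $B'$ we may take it to send $u \mapsto v$, in which case it also admits a basis pair $(A,\cdot,B)$; finally, any two elements with domain basis $A$ and range basis $B$ differ by a permutation $(B,\cdot,B)$. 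The key structural difference from your proposal is this simultaneous collapse of carets in the domain \emph{and} range trees, rather than normalising the domain to a complete basis and trying to absorb the entire discrepancy into a single permutation --- which, as the example shows, cannot be done.
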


This is proved in the proof of \cite[Theorem~8.7]{ref:GarncarekLazarovich18}. Indeed, more is shown in that proof. If $n$ is even, then $(u \ \ v) = \prod_{a=0}^{n-1}(ua \ \ va)$, so every element of $V_n$ is a product of an even number of transpositions. In contrast, if $n$ is odd, then a transposition cannot be written as the product of an even number of transpositions, from which we deduce the following.

\begin{lemma} \label{lem:transpositions_derived}
Let $n$ be odd. The subgroup $V_n'$ consists of the elements of $V_n$ that can be written as a product of an even number of transpositions.
\end{lemma}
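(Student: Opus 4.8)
The plan is to prove this by combining Lemma~\ref{lem:transpositions} with a parity argument. Let me denote by $P$ the set of elements of $V_n$ that can be written as a product of an even number of transpositions. The goal is to show $V_n' = P$.

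First I would verify that $P$ is a subgroup of $V_n$. Indeed, $P$ is closed under products (concatenating two even-length expressions gives an even-length expression), it contains the identity (the empty product), and it is closed under inverses since each transposition is an involution, so the inverse of a product of $2k$ transpositions is the reversed product of the same $2k$ transpositions. Next I would check that $P$ is normal in $V_n$: conjugating a product of transpositions by any $g \in V_n$ yields a product of the same number of conjugated transpositions, and a conjugate of a transposition is again a transposition (since $g$ maps basis pairs to basis pairs, $(u \ \ v)^g$ is the transposition swapping the images of $u\Gamma$ and $v\Gamma$). So $P \trianglelefteq V_n$, and $P$ has index at most two because $V_n$ is generated by transpositions (Lemma~\ref{lem:transpositions}), meaning every element is a product of transpositions and hence lies in $P$ if that product has even length and in the nontrivial coset otherwise.

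The crux is to show that the index is exactly two, equivalently that a single transposition is \emph{not} in $P$ when $n$ is odd; this is the step flagged in the excerpt as requiring the oddness of $n$, and it is the main obstacle. The natural tool is a well-defined parity (sign) homomorphism $\varepsilon \colon V_n \to \{\pm 1\}$ sending each transposition to $-1$. To construct it, I would fix a level $k$ and restrict attention to the basis $A_k = [n]^k$ of all words of length $k$, which has $n^k$ elements; any element $g \in V_n$ expressible with a basis pair refining to level $k$ acts on a sufficiently fine common refinement, and I would pass to a level $k$ large enough that both the domain and range bases of $g$ refine $A_k$. On such a refinement $g$ permutes the $n^k$ cylinders, and I would define the sign of $g$ as the sign of that induced permutation in $S_{n^k}$. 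The key point is that this is independent of the chosen refinement level: refining one more level replaces each point by $n$ points, so a transposition $(u \ \ v)$ of cylinders becomes a product of $n$ disjoint transpositions $\prod_{a=0}^{n-1}(ua \ \ va)$, whose sign is $(-1)^n$. When $n$ is odd this equals $-1$, so the sign is stable under refinement and $\varepsilon$ is a genuine homomorphism with every transposition mapping to $-1$.

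Granting this homomorphism, the argument concludes quickly: $P = \ker \varepsilon$ since a product of transpositions maps to $(-1)^{(\text{length})}$, so it lies in the kernel precisely when the length is even. Because $\varepsilon$ is surjective (any transposition maps to $-1$), $P$ has index two in $V_n$. By part~(ii) of the Higman theorem, $V_n'$ is the unique subgroup of index two when $n$ is odd, so $V_n' = P = \ker\varepsilon$, as required. The one technical care needed is the well-definedness and refinement-invariance of $\varepsilon$, where the oddness of $n$ is essential; for even $n$ the sign $(-1)^n = 1$ collapses, reflecting the fact stated before the lemma that every element of $V_n$ is then a product of an even number of transpositions.
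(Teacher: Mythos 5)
Your overall skeleton \textemdash\ $P$ is a subgroup, $[V_n : P] \leq 2$ by Lemma~\ref{lem:transpositions}, and $P = V_n'$ once one knows a single transposition lies outside $P$ \textemdash\ is exactly the deduction the paper has in mind (the paper does not prove that crux fact itself; it cites the proof of Theorem~8.7 of Garncarek--Lazarovich for it and then deduces the lemma). The genuine gap is in your construction of the sign homomorphism $\varepsilon$. You define $\varepsilon(g)$ as the sign of ``the induced permutation in $S_{n^k}$'' on the level basis $X_n^{(k)}$ for $k$ large, but elements of $V_n$ do not permute the level-$k$ cylinders for \emph{any} $k$: a basis pair only says that $g$ carries the partition $\{a\mathfrak{C}_n \mid a \in A\}$ onto the \emph{different} partition $\{b\mathfrak{C}_n \mid b \in B\}$, and no refinement makes these equal in general. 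This fails already for the very elements you must evaluate $\varepsilon$ on: the transposition $(0 \ \ 10)$ maps each level-$k$ cylinder $0w\mathfrak{C}_n$ (with $|0w|=k$) onto $10w\mathfrak{C}_n$, a cylinder of depth $k+1$, and since distinct words define distinct cylinders, $(0 \ \ 10)$ preserves no level partition whatsoever; likewise the infinite-order element $\gamma$ of Figure~\ref{fig:tree_pairs} preserves no finite partition into cylinders. So $\varepsilon$ is undefined as you have set it up, and your (correct) refinement computation $(u \ \ v) \mapsto \prod_{a=0}^{n-1}(ua \ \ va)$ has nothing to attach itself to.

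The standard repair, which is essentially what the cited source does, is to define the sign of a \emph{basis pair} rather than of an action on a fixed partition: order $A = \{a_1 < \cdots < a_N\}$ and $B = \{b_1 < \cdots < b_N\}$ lexicographically and let $\varepsilon(g)$ be the sign of the permutation of indices given by $a_i\sigma = b_j$. One then checks invariance under a single simultaneous refinement (replace $a_i \in A$ and $a_i\sigma \in B$ by their $n$ children): the blown-up block maps order-preservingly, so the number of inversions changes by $(n-1)$ times the number of inversions involving $i$, which is even precisely because $n$ is odd. Since any two basis pairs for $g$ have a common refinement, $\varepsilon$ is well defined, and it is multiplicative because basis pairs for $g$ and $h$ can be chosen to compose; with this $\varepsilon$ your concluding argument goes through verbatim. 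One smaller slip: your normality argument for $P$ is also incorrect, because conjugates of transpositions need not be transpositions \textemdash\ the conjugate $(u \ \ v)^g$ swaps the sets $(u\mathfrak{C}_n)g$ and $(v\mathfrak{C}_n)g$, which are in general unions of several cylinders, so one gets, e.g., a double transposition rather than a transposition. This slip is harmless, however, since normality of $P$ already follows from $[V_n : P] \leq 2$.
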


\begin{remark} \label{rem:transpositions}
Let $n$ be odd. In the proof of \cite[Theorem~8.7]{ref:GarncarekLazarovich18}, it is observed that $V_n'$ is generated by the set of \emph{double transpositions}, that is elements $(u_1 \ \ v_1)(u_2 \ \ v_2)$ where $u_1,v_1,u_2,v_2$ are pairwise incomparable. Let us explain how this can be deduced from Lemma~\ref{lem:transpositions_derived}. 

For $k \geq 0$, write $X^{(k)}_n = \{ x \in X_n \mid |x| = k \}$. First we claim that every product of two transpositions is a product of some number of double transpositions. Let $u_1,v_1,u_2,v_2 \in X_n$ such that $u_1 \perp v_1$ and $u_2 \perp v_2$. Observe that there exist $k \geq 0$ and $x_1,x_2 \in X_{n}^{(k)}$ such that $u_1x_1, v_1x_1, u_2x_2, v_2x_2$ are pairwise incomparable. We may write
\[
(u_1 \ \ v_1)(u_2 \ \ v_2) = \left( \prod_{x \in X_n^{(k)} \setminus \{ x_1 \}}(u_1x \ \ v_1x) \right) (u_1x_1 \ \ v_1x_1) (u_2x_2 \ \ v_2x_2) \left( \prod_{x \in X_n^{(k)} \setminus \{ x_2 \}}(u_2x \ \ v_2x) \right).
\]
Since $|X_n^{(k)} \setminus \{x_1\}| = |X_n^{(k)} \setminus \{x_2\}| = n^k-1$ is even, the above equation expresses the element $(u_1 \ \ v_1)(u_2 \ \ v_2)$ as a product of $n^k$ double transpositions. Therefore, every product of two transpositions is a product of double transpositions, so by Lemma~\ref{lem:transpositions_derived}, $V_n'$ is generated by the double transpositions. 

Since $V_n'$ is generated by the double transpositions, it is certainly generated by the even permutations in $V_n$. This implies that $V_n'$ is also generated by the \emph{three-cycles}, that is elements $(u \ \ v \ \ w)$, where $u$, $v$, $w$ are pairwise incomparable.
\end{remark}

It is via the following result that we use Lemmas~\ref{lem:transpositions} and~\ref{lem:transpositions_derived}.

\begin{lemma} \label{lem:transpositions_strong}
Let $u_0,u_1 \in X_n$ with $u_0 \neq u_1$ and $|u_0|=|u_1|=k \geq 2$. 
\begin{enumerate}
\item $V_n  = \< (u_0 \ \ v)         \mid \text{$u_0 \perp v$ and $|v| \geq k$}\>$
\item $V_n' = \< (u_0 \ \ u_1 \ \ v) \mid \text{$u_0 \perp v$, $u_1 \perp v$ and $|v| \geq k$}\>$ if $n$ is odd.
\end{enumerate} 
\end{lemma}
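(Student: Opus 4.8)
The plan is to handle both parts in parallel, reducing each to a generating set already recorded in Section~\ref{s:generators}. For (i) it suffices, by Lemma~\ref{lem:transpositions}, to show that the subgroup $H$ generated by the listed transpositions contains every transposition $(p\ \ q)$; for (ii) it suffices, by Remark~\ref{rem:transpositions}, to show that the subgroup $H'$ generated by the listed three-cycles contains every three-cycle $(p\ \ q\ \ r)$. Throughout I write $X_n^{(k)}$ for the words of length $k$, noting that distinct words of equal length are automatically incomparable, so $X_n^{(k)}$ supports a copy of $\mathrm{Sym}(X_n^{(k)})$ inside $V_n$ acting by permuting the level-$k$ cones. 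The work in both parts is to promote the constrained generators to arbitrary (short) transpositions or three-cycles, via a refinement identity: for any $j$ one has $(p\ \ q)=\prod_{x\in X_n^{(j)}}(px\ \ qx)$ and $(p\ \ q\ \ r)=\prod_{x\in X_n^{(j)}}(px\ \ qx\ \ rx)$, a commuting product; choosing $j$ with all resulting words of length $\geq k$ reduces everything to the case where the moved points are ``long''.

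For (i) I would first show $\mathrm{Sym}(X_n^{(k)})\leq H$: for distinct level-$k$ words $a,b\neq u_0$ the identity $(a\ \ b)=(u_0\ \ a)(u_0\ \ b)(u_0\ \ a)$ (valid since $u_0,a,b$ are pairwise incomparable) exhibits $(a\ \ b)$ as a product of generators, and $(u_0\ \ b)$ is itself a generator, so $H$ contains all level-$k$ transpositions. Conjugating a generator $(u_0\ \ v)$, where $v=sv'$ with $|s|=k$, by $\pi\in\mathrm{Sym}(X_n^{(k)})$ yields $(u_0\pi\ \ (s\pi)v')$, and a suitable choice of $\pi$ shows that $H$ contains every $(a\ \ b)$ with $|a|=k$, $|b|\geq k$ and $a\perp b$. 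A single relay through a level-$k$ word $c$ incomparable to both $p$ and $q$ (which exists since $n^k\geq 4$) then gives $(p\ \ q)=(c\ \ p)(c\ \ q)(c\ \ p)\in H$ for all $p,q$ of length $\geq k$. Refining an arbitrary $(p\ \ q)$ as above finishes part (i).

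Part (ii) follows the same skeleton with three-cycles. The generators $(u_0\ \ u_1\ \ z)$, as $z$ runs over $X_n^{(k)}\setminus\{u_0,u_1\}$, are the three-cycles $(1\ \ 2\ \ i)$ and so generate $\mathrm{Alt}(X_n^{(k)})$; conjugating $(u_0\ \ u_1\ \ v)$ by this alternating group (using its $2$-transitivity) shows $H'$ contains every three-cycle with two level-$k$ entries and a third of length $\geq k$. The crucial passage to long three-cycles $(p\ \ q\ \ r)$ uses two distinct level-$k$ words $c,d$ incomparable to $p,q,r$, which exist precisely because $n$ is odd, forcing $n^k\geq 9\geq 5$. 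From $(c\ \ d\ \ p),(c\ \ d\ \ q),(c\ \ d\ \ r)\in H'$ one extracts $(c\ \ p\ \ q)=(c\ \ d\ \ p)^{-1}(c\ \ d\ \ q)$ and $(c\ \ q\ \ r)=(c\ \ d\ \ q)^{-1}(c\ \ d\ \ r)$, whence $(p\ \ q\ \ r)=(c\ \ q\ \ r)(c\ \ p\ \ q)\in H'$; refining then yields all three-cycles and hence $H'=V_n'$.

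The main obstacle is the relay step: manufacturing a long transposition or three-cycle whose moved points need not be incomparable to $u_0$ (respectively $u_0,u_1$) out of generators whose free entry is constrained to be incomparable to the fixed word(s). This is exactly where the number of available level-$k$ cones matters, and where the hypothesis that $n$ is odd in (ii) is used decisively: it supplies the two spare level-$k$ cones needed to host relay points disjoint from the three-cycle being built, something that could fail for small $n^k$.
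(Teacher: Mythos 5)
Your proof is correct, and it shares the paper's overall skeleton: promote the constrained generators to \emph{all} transpositions (resp.\ three-cycles) whose entries have length at least $k$, then refine an arbitrary transposition (three-cycle) into ones with long entries, and finally invoke Lemma~\ref{lem:transpositions} (resp.\ Remark~\ref{rem:transpositions}). Where you genuinely diverge is in how the middle step is carried out. The paper fixes an arbitrary long transposition $(v\ \ w)$ (or three-cycle $(v\ \ w\ \ z)$), splits into cases according to which of the level-$k$ prefixes $v_1,w_1,z_1$ coincide with $u_0$ or $u_1$, and in each case writes down an explicit conjugator that is a product of the given generators. You instead first install the finite group $\mathrm{Sym}(X_n^{(k)})$ (resp.\ $\mathrm{Alt}(X_n^{(k)})$) inside the subgroup via star transpositions $(u_0\ \ a)$ (resp.\ the three-cycles $(u_0\ \ u_1\ \ z)$), conjugate a generator by it to free up one entry, and then pass through spare level-$k$ relay cones to detach the remaining entries from $u_0,u_1$ entirely. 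This buys a cleaner, more conceptual argument (transitivity of a finite symmetric/alternating group replaces the paper's case analysis), at the cost of needing the intermediate subgroup; both arguments ultimately consume the same resource, namely enough unblocked level-$k$ cones. Two small points of precision: in (ii), to hit a prescribed three-cycle $(a\ \ b\ \ r)$ you must map the \emph{triple} $(u_0,u_1,s)$ to $(a,b,t)$, so you should either invoke $3$-transitivity of $\mathrm{Alt}(X_n^{(k)})$ (valid, as $n^k\geq 9\geq 5$) or choose $\pi$ first by $2$-transitivity and then set $s=t\pi^{-1}$, checking $s\notin\{u_0,u_1\}$; and the availability of two spare cones is not really ``precisely'' where oddness of $n$ enters --- oddness is what makes the statement about $V_n'$ meaningful in the first place, and the paper's case analysis likewise needs several spare level-$k$ words (e.g.\ $x \in X_n^{(k)}\setminus\{u_0,u_1,v_1,w_1,z_1\}$, requiring $n^k\geq 6$), which $n\geq 3$, $k\geq 2$ supplies.
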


\begin{proof}
We begin with part~(i). Write
\[
H = \< \{ (u_0 \ \ v) \mid \text{$u_0 \perp v$ and $|v| \geq k$} \} \>.
\]
Let $v, w \in X_n$ such that $v \perp w$ and $|v|,|w| \geq k$. Write $v=v_1v_2$ and $w=w_1w_2$ where $|v_1|=|w_1|=k$. First assume that $u_0 \not\in \{v_1,w_1\}$, then
\[
(v \ \ w) = (u_0 \ \ v)^{(u_0 \ \ w)} \in H.
\] 
Now assume that $u_0 \in \{v_1,w_1\}$. Without loss of generality assume that $v_1=u_0$. If $v_1=w_1=u_0$, then
\[
(v \ \ w) = (u_0v_2 \ \ u_0w_2) = (u_1v_2 \ \ u_1w_2)^{(u_0 \ \ u_1)} = (u_0 \ \ u_1w_2)^{(u_0 \ \ u_1v_2)(u_0 \ \ u_1)} \in H.
\]
If $v_1 \neq w_1$, then fix $x \in X_n \setminus \{ v_1, w_1 \}$ with $|x|=k$ and note that
\[
(v \ \ w) = (u_0v_2 \ \ w_1w_2) = (xv_2 \ \ w_1w_2)^{(u_0 \ \ x)} = (u_0 \ \ w_1w_2)^{(u_0 \ \ xv_2)(u_0 \ \ x)} \in H.
\]
Therefore, in all cases, $(v \ \ w) \in H$. Since every transposition is a product of transpositions of the form $(v \ \ w)$ where $|v|, |w| \geq k$, we conclude, by Lemma~\ref{lem:transpositions}, that $H = V_n$.

We will now assume that $n$ is odd and consider part~(ii). Write
\[
K = \< \{ (u_0 \ \ u_1 \ \ v) \mid \text{$u_0 \perp v$, $u_1 \perp v$ and $|v| \geq k$ } \} \>.
\]
Let $v, w, z \in X_n$ be pairwise incomparable with $|v|,|w|,|z| \geq k$. Write $v=v_1v_2$, $w=w_1w_2$, $z=z_1z_2$, where $|v_1|=|w_1|=|z_1|=k$. First assume that $u_0,u_1 \not\in \{v_1,w_1,z_1\}$. Then 
\[
(v \ \ w \ \ z) = (u_0 \ \ u_1 \ \ z)^{(u_0 \ \ u_1 \ \ w)(u_0 \ \ u_1 \ \ v)} \in K.
\]

Next assume that exactly one of $u_0$ and $u_1$ is contained in $\{ v_1,z_1,w_1 \}$. Fix $x \in X_n \setminus \{u_0,u_1,v_1,w_1,z_1\}$ with $|x|=k$. Without loss of generality, we can assume that $u_0 = v_1$ and $u_1 \not\in \{v_1,z_1,w_1\}$. If $v_1=w_1=z_1$, then
\[
(v \ \ w \ \ z) = (xv_2 \ \ xw_2 \ \ xz_2)^{(u_0 \ \ u_1 \ \ x)} = (u_0 \ \ u_1 \ \ xz_2)^{(u_0 \ \ u_1 \ \ xw_2)(u_0 \ \ u_1 \ \ xv_2)(u_0 \ \ u_1 \ \ x)}.
\]
If $v_1 \not\in \{w_1,z_1\}$, then 
\[
(v \ \ w \ \ z) = (u_0v_2 \ \ w \ \ z) = (xv_2 \ \ w \ \ z)^{(u_0 \ \ u_1 \ \ x)} = (u_0 \ \ u_1 \ \ z)^{(u_0 \ \ u_1 \ \ w)(u_0 \ \ u_1 \ \ xv_2)(u_0 \ \ u_1 \ \ x)}.
\]
We may now assume, without loss of generality, that $v_1=w_1 \neq z_1$. In this case,
\[
(v \ \ w \ \ z) = (u_0v_2 \ \ u_0w_2 \ \ z) = (xv_2 \ \ xw_2 \ \ z)^{(u_0 \ \ u_1 \ \ x)} = (u_0 \ \ u_1 \ \ z)^{(u_0 \ \ u_1 \ \ xw_2)(u_0 \ \ u_1 \ \ xv_2)(u_0 \ \ u_1 \ \ x)}.
\]

Now assume that $u_0,u_1 \in \{v_1,w_1,z_1\}$. Fix $x,y \in X_n \setminus \{v_1,w_1,z_1\}$ with $|x|=|y|=k$. Without loss of generality, we can assume that $u_0 = v_1$ and $u_1 = w_1$. If $z_1 \not\in \{v_1,w_1\}$, then 
\begin{align*}
(v \ \ w \ \ z) &= (u_0v_2 \ \ u_1w_2 \ \ z) = (xv_2 \ \ yw_2 \ \ z)^{(u_0 \ \ u_1 \ \ y)(u_0 \ \ u_1 \ \ x)} \\
                &= (u_0 \ \ u_1 \ \ z)^{(u_0 \ \ u_1 \ \ yw_2)(u_0 \ \ u_1 \ \ xv_2)(u_0 \ \ u_1 \ \ y)(u_0 \ \ u_1 \ \ x)}.
\end{align*}
It remains to assume, without loss of generality, that $z_1=w_1$. In this final case,
\begin{align*}
(v \ \ w \ \ z) &= (u_0v_2 \ \ u_1w_2 \ \ u_1z_2) = (xv_2 \ \ yw_2 \ \ yz_2)^{(u_0 \ \ u_1 \ \ y)(u_0 \ \ u_1 \ \ x)} \\
                &= (u_0 \ \ u_1 \ \ yz_2)^{(u_0 \ \ u_1 \ \ yw_2)(u_0 \ \ u_1 \ \ xv_2)(u_0 \ \ u_1 \ \ y)(u_0 \ \ u_1 \ \ x)}.
\end{align*}
Therefore, in all cases $(v \ \ w \ \ z) \in K$. By Remark~\ref{rem:transpositions}, this implies that $K = V_n'$, as required.
\end{proof}

Recall that $X^{(k)}_n = \{ x \in X_n \mid |x| = k \}$, and write $S^{(k)}_n = \mathrm{Sym}(X^{(k)}_n)$ and $A^{(k)}_n = \mathrm{Alt}(X^{(k)}_n)$. Define
\begin{equation} \label{eq:delta}
\delta  = \prod_{a=0}^{n-1} (0 \ \ 1a) \quad \text{and} \quad \delta' = \delta(0 \ \ 2),
\end{equation}
where we only define $\delta'$ when $n$ is odd. In the following proposition, we use notation such as $\delta_{[u]}$, which we introduced in Section~\ref{ss:p_notation}.

\begin{proposition} \label{prop:delta_symmetric}
Let $u \in X_n$ with $|u| \geq 1$ and let $k = |u|+1$. Then
\begin{enumerate}
\item $V_n  = \< \delta_{[u]}, S^{(k)}_n \> = \< \delta_{[u]}, A^{(k+1)}_n \>$
\item $V_n' = \< \delta'_{[u]}, A^{(k)}_n \> = \< \delta'_{[u]}, A^{(k+1)}_n \>$ if $n$ is odd.
\end{enumerate}
\end{proposition}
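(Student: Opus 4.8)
The plan is to reduce every equality to exhibiting a known generating family---the transpositions of Lemma~\ref{lem:transpositions_strong}(i) in the case of $V_n$, and the three-cycles of Lemma~\ref{lem:transpositions_strong}(ii) in the case of $V_n'$---and to build those generators out of $\delta_{[u]}$ (resp.\ $\delta'_{[u]}$) together with the given symmetric or alternating group, exploiting the self-similar ``splitting'' action of $\delta_{[u]}$ on the tree.

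I would begin with the first equality $V_n = \<\delta_{[u]}, S^{(k)}_n\>$, which is the technical heart. Writing $H = \<\delta_{[u]}, S^{(k)}_n\>$, the goal is to show that $H$ contains every transposition $(u_0 \ \ v)$ with $u_0 \perp v$ and $|v| \geq k$, for some fixed $u_0$ of length $k$; Lemma~\ref{lem:transpositions_strong}(i) then gives $H = V_n$. The basic computation is that $\delta_{[u]}$ is the $(n+1)$-cycle $(u0 \ \ u10 \ \ \cdots \ \ u1(n-1))$, linking the length-$k$ word $u0$ to the length-$(k+1)$ children of $u1$. Conjugating $\delta_{[u]}$ by $S^{(k)}_n$ (and, at deeper levels, by transpositions already constructed) yields ``splitting gadgets'' $(p \ \ w0 \ \ \cdots \ \ w(n-1))$ that split a word $w$ into its $n$ children while fixing an anchor $p$ of length $k$. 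Combining two gadgets with the same $w$ but different anchors produces a three-cycle, and multiplying by a suitable element of $S^{(k)}_n$ converts it into a genuine mixed-length transposition $(p \ \ wb)$; conjugating such a transposition repeatedly by the gadget itself recovers the full symmetric group on $\{w0, \dots, w(n-1)\}$, and hence all transpositions $(u_0 \ \ v)$ with $|v| = |w|+1$. An induction on $|v|$---where the gadget for a length-$j$ word $w$ is obtained by conjugating $\delta_{[u]}$ by the transposition $(u1 \ \ w)$ built at the previous stage---then reaches all $|v| \geq k$.

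For the second equalities I would compare the given groups using the identity that a length-$k$ transposition $(p \ \ q)$ equals the product $\prod_{b=0}^{n-1}(pb \ \ qb)$ of $n$ length-$(k+1)$ transpositions. When $n$ is even this shows $S^{(k)}_n \leq A^{(k+1)}_n$, so $\<\delta_{[u]}, A^{(k+1)}_n\> \geq \<\delta_{[u]}, S^{(k)}_n\> = V_n$ at once; and $A^{(k)}_n \leq A^{(k+1)}_n$ always, which together with the first equality for $V_n'$ yields $V_n' = \<\delta'_{[u]}, A^{(k+1)}_n\>$ once $V_n' = \<\delta'_{[u]}, A^{(k)}_n\>$ is known (the reverse inclusion being clear since $\delta'_{[u]}$ is even and $A^{(k+1)}_n \leq V_n'$). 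The first equality for $V_n'$ runs parallel to the argument above but with three-cycles in place of transpositions (Lemma~\ref{lem:transpositions_strong}(ii)), using that $\delta'_{[u]}$ is the even $(n+2)$-cycle $(u0 \ \ u10 \ \ \cdots \ \ u1(n-1) \ \ u2)$ and that every gadget manipulation can be performed with even permutations.

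The delicate point, and the main obstacle, is the remaining case $V_n = \<\delta_{[u]}, A^{(k+1)}_n\>$ with $n$ odd, where $S^{(k)}_n \not\leq A^{(k+1)}_n$ and one cannot directly manufacture an odd permutation of bounded level. Here I would instead show $\<\delta_{[u]}, A^{(k+1)}_n\> \supseteq V_n'$ by rerunning the even version of the construction---using $A^{(k)}_n \leq A^{(k+1)}_n$ to relabel anchors and producing three-cycles rather than transpositions---so that Lemma~\ref{lem:transpositions_strong}(ii) applies; since $\delta_{[u]}$ is a product of $n$ transpositions and so lies outside $V_n'$, and $[V_n : V_n'] = 2$, the subgroup is then forced to be all of $V_n$. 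The two genuine difficulties to manage carefully are therefore the depth induction (conjugating $\delta_{[u]}$ by ever-deeper transpositions to reach unbounded $|v|$) and this parity bookkeeping for odd $n$, where the even generators must be built directly and $\delta_{[u]}$ is used only to supply the nontrivial coset of $V_n'$.
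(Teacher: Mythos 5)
Your overall route is the same as the paper's: both arguments reduce Proposition~\ref{prop:delta_symmetric} to Lemma~\ref{lem:transpositions_strong} by an induction on word length driven by the cycle $\delta_{[u]} = (u0 \ \ u10 \ \ \cdots \ \ u1(n-1))$, and both dispose of the $A^{(k+1)}_n$ statements with the identical parity bookkeeping ($S^{(k)}_n \leq A^{(k+1)}_n$ when $n$ is even; when $n$ is odd, first reach $V_n'$, then use that $\delta_{[u]}$ is an odd permutation and $[V_n:V_n']=2$). The difference is only the engine of the induction: the paper builds, for each target word $v$, a ``transporter'' $g \in \langle \tilde{\delta},\tilde{S}\rangle$ fixing two reference words $u_{N-1},u_N$ and carrying $u_1$ to $v$, then conjugates the fixed transposition $(u_N \ \ u_1)$ or three-cycle $(u_{N-1} \ \ u_N \ \ u_1)$ by $g$; you instead manufacture mixed-length transpositions directly by combining two conjugates of $\delta_{[u]}$ sharing a splitting word. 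Your gadget computations themselves (the product of two gadgets giving $(p \ \ p' \ \ w(n-1))$, etc.) are correct.

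There is, however, a genuine gap in your inductive step. You produce the gadget for a length-$j$ word $w$ by conjugating $\delta_{[u]}$ by ``the transposition $(u1 \ \ w)$ built at the previous stage''. This requires $u1 \perp w$, and that fails exactly when $u1$ is a proper prefix of $w$: in that case $(u1 \ \ w)$ is not an element of $V_n$ at all, so no such conjugator exists at any stage. These words cannot be skipped: taking the anchor of Lemma~\ref{lem:transpositions_strong}(i) to be $u0$, the lemma demands the transpositions $(u0 \ \ v)$ for \emph{every} $v \perp u0$ with $|v| \geq k$, including $v$ arbitrarily deep inside the cone $u1\mathfrak{C}_n$, and the parents $w$ of all such $v$ have $u1$ as a proper prefix. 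The repair is cheap with tools you already have: choose a length-$k$ word $q \notin \{u0,u1\}$ (then automatically $q \perp w$, since the length-$k$ prefix of $w$ is $u1$), note that $(q \ \ w) = (u0 \ \ w)^{(u0 \ \ q)}$ lies in your stage-$j$ group because $u0 \perp w$, and conjugate $\delta_{[u]}$ by the product $(u1 \ \ q)(q \ \ w)$; this product carries each cone $u1a\,\mathfrak{C}_n$ onto $wa\,\mathfrak{C}_n$ by prefix replacement and fixes $u0\mathfrak{C}_n$ pointwise, so the conjugate is the desired gadget $(u0 \ \ w0 \ \ \cdots \ \ w(n-1))$. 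The analogous patch, with three-cycles and even conjugators, is needed in your $A^{(k)}_n$ variants. The paper's transporter formulation never meets this case split, because its inductive element $(u_2 \ \ u_1 \ \ u_0)\tilde{\delta}^{\,a+1}h$ is assembled by composing with the previously built element $h$ rather than by conjugating $\delta_{[u]}$ by something that depends on $w$.
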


\begin{proof}
For this proof, it will be convenient to write $X^{(k)}_n = \{ u_0, \dots, u_N \}$ where $N = n^k-1$. We will assume that $u_a = ua$ if $a \in [n]$. In addition, let $\tilde{\delta}$ be either $\delta_{[u]}$ or $\delta'_{[u]}$ and let $\tilde{S}$ be either $S^{(k)}_n$ or $A^{(k)}_n$.

Let $v \in X_n$ such that $v \perp u_{N-1}$, $v \perp u_N$ and $|v| \geq k$.  Assume that $\tilde{S} = S_2^{(2)}$ if $n=k=2$.

\noindent\textbf{Claim.} There exists $g \in \<\tilde{S}, \tilde{\delta} \>$ such that $u_{N-1}g=u_{N-1}$, $u_Ng = u_N$ and $u_1g = v$. 

\noindent\textit{Proof of Claim.} We induct on $|v|$. If $|v|=k$, then $v \in X^{(k)}_n$, so there is clearly an element $g \in \tilde{S}$ such that $u_{N-1}g=u_{N-1}$, $u_Ng = u_N$ and $u_1g=v$. Now let $l > k$ and assume that claim is true for all $v \in X_n$ such that $|v| < l$.  Assume $|v|=l$ and write $v=wa$ where $a \in [n]$. By induction, there exists $h \in \< \tilde{S}, \tilde{\delta} \>$ such that $u_{N-1}h = u_{N-1}$, $u_Nh=u_N$ and $u_1h=w$. First assume that $nk > 4$ and note that $u_1(u_2 \ \ u_1 \ \ u_0)\tilde{\delta}^{a+1} = u_1a$.  Therefore,
\[
v = wa = u_1ah = u_1(u_2 \ \ u_1 \ \ u_0)\tilde{\delta}^{a+1}h = u_1g,
\] 
where $g =(u_2 \ \ u_1 \ \ u_0)\tilde{\delta}^{a+1}h$ is an element of $\<\tilde{S},  \tilde{\delta}\>$ for which $u_{N-1}g = u_{N-1}$ and $u_Ng=u_N$. When $nk=4$, note that $\tilde{S} = S_2^{(2)}$ and, arguing as above, we see that $g = (u_1 \ \ u_0)\tilde{\delta}^{a+1}h \in \< \tilde{S}, \tilde{\delta}\>$ fixes $u_{N-1}$ and $u_N$ and satisfies $u_1g = v$. This proves the claim.

Let us now record some consequences of the claim.

Since $(u_N \ \ v) = (u_N \ \ u_1)^g \in \< S^{(k)}_n, \delta_{[u]} \>$, Lemma~\ref{lem:transpositions_strong}(i) implies that $\< S^{(k)}_n, \delta_{[u]} \> = V_n$. If $n$ is even, then $S^{(k)}_n \leq A^{(k+1)}_n$, so $\<A^{(k+1)}_n, \delta_{[u]}\> = V_n$. Now assume that $n$ is odd. In this case, $(u_{N-1} \ \ u_N \ \ v) = (u_{N-1} \ \ u_N \ \ u_1)^g \in \< A^{(k)}_n, \delta_{[u]}\>$, so Lemma~\ref{lem:transpositions_strong} implies that $V_n' \leq \< A^{(k)}_n, \delta_{[u]} \> \leq \< A^{(k+1)}_n, \delta_{[u]} \>$. Now $\delta_{[u]}$, being an $(n+1)$-cycle, is an odd permutation, so $\delta_{[u]} \not\in V_n'$ and again we conclude that $\< A^{(k+1)}_n, \delta_{[u]} \> = V_n$. This proves (i).

We now turn to (ii), where we assume that $n$ is odd. Similarly to above $(u_{N-1} \ \ u_N \ \ v) = (u_{N-1} \ \ u_N \ \ u_1)^g \in \< A^{(k)}_n, \delta'_{[u]}\>$, so Lemma~\ref{lem:transpositions_strong} implies that $\< A^{(k)}_n, \delta'_{[u]} \> = V_n'$, noting that $\delta'_{[u]}$, being an $(n+2)$-cycle, is an even permutation. Moreover, $A^{(k)}_n \leq A^{(k+1)}_n \leq V_n'$, so $\< A^{(k+1)}_n, \delta'_{[u]} \> = V_n'$. This completes the proof. 
\end{proof}

We will now use Proposition~\ref{prop:delta_symmetric} to obtain two generating sets for $V_n$ and $V_n'$ that will play important roles in the proof of Theorem~\ref{thm:higman}. We begin with an infinite generating set reflecting the self-similar nature of $V_n$.

\begin{proposition} \label{prop:links}
Let $A = \{ x_1, \dots, x_\ell \}$ be a basis. Then
\begin{enumerate}
\item $V_n  = \< V_{n, [x_1]},  \{ (x_10 \ \ x_i) \mid 1 < i \leq \ell \} \>$
\item $V_n' = \< V'_{n, [x_1]}, \{ (x_10 \ \ x_11 \ \ x_i) \mid 1 < i \leq \ell \} \>$.
\end{enumerate}
\end{proposition}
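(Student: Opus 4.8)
The plan is to reduce each part to the generating sets furnished by Lemma~\ref{lem:transpositions_strong}. If $\ell = 1$ then $x_1$ is the empty word, $V_{n,[x_1]} = V_n$ and $V'_{n,[x_1]} = V_n'$, and there are no link generators, so both statements are trivial; hence I assume $\ell \geq 2$, so that $|x_1| \geq 1$. For part~(i) I would set $u_0 = x_10$, so $k := |u_0| = |x_1|+1 \geq 2$, write $H$ for the right-hand side, and invoke Lemma~\ref{lem:transpositions_strong}(i): it suffices to show $(x_10 \ \ v) \in H$ for every $v$ with $x_10 \perp v$ and $|v| \geq k$. For part~(ii) I take $n$ odd (as then $V_n' \neq V_n$; for even $n$ the statement reduces to~(i)), set $u_0 = x_10$ and $u_1 = x_11$, write $K$ for the right-hand side, and note $K \leq V_n'$ since $V'_{n,[x_1]} \leq V_n'$ and each three-cycle $(x_10 \ \ x_11 \ \ x_i)$ is even; by Lemma~\ref{lem:transpositions_strong}(ii) it then suffices to show $(x_10 \ \ x_11 \ \ v) \in K$ whenever $x_10 \perp v$, $x_11 \perp v$ and $|v| \geq k$.

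Fixing such a $v$, I would split into cases according to its position relative to the basis. Since $A$ is a basis, $v$ is comparable to at least one $x_j$. If $v$ is comparable to $x_1$, then $|v| > |x_1|$ forces $v = x_1s$ with $s$ incomparable to $0$ in part~(i), and to both $0$ and $1$ in part~(ii); in either case the required element lies in $V_{n,[x_1]}$ (respectively $V'_{n,[x_1]}$), being $(0 \ \ s)_{[x_1]}$ (respectively $(0 \ \ 1 \ \ s)_{[x_1]}$). Otherwise $v \perp x_1$, and then either $v$ extends some $x_j$ with $j > 1$, or $v$ is a proper prefix of exactly those basis elements $x_{j_1}, \dots, x_{j_r}$ that extend it (all of index $> 1$).

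The crux is to reach elements whose support runs deep into a cone $x_j\mathfrak{C}_n$ with $j>1$. The point is that a link generator transports depth: since $c = (x_10 \ \ x_j)$ satisfies $(x_10w)c = x_jw$ for all $w$, conjugating $(x_10t \ \ x_11) \in V_{n,[x_1]}$ by $c$ gives $(x_jt \ \ x_11) \in H$ (as $c$ fixes $x_11$), and conjugating this by an element of $V_{n,[x_1]}$ carrying $x_11\mathfrak{C}_n$ onto $x_10s\mathfrak{C}_n$ yields $(x_10s \ \ x_jt) \in H$ for all words $s,t$. For part~(ii) the same idea uses the three-cycle link $(x_10 \ \ x_11 \ \ x_j)$: conjugating $(x_11t \ \ x_120 \ \ x_121) \in V'_{n,[x_1]}$ by it gives $(x_jt \ \ x_120 \ \ x_121) \in K$ (here $n \geq 3$ provides room inside $x_12\mathfrak{C}_n$), and conjugating by an even element of $V'_{n,[x_1]}$ carrying $x_120\mathfrak{C}_n, x_121\mathfrak{C}_n$ onto $x_10s\mathfrak{C}_n, x_11s'\mathfrak{C}_n$ yields $(x_10s \ \ x_11s' \ \ x_jt) \in K$ for all $s,s',t$. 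The extending case is then the specialisation where $s$ (and $s'$) are empty, while the prefix case follows from the disjoint-support identities $(x_10 \ \ v) = \prod_p (x_10y_p \ \ x_{j_p})$ and $(x_10 \ \ x_11 \ \ v) = \prod_p (x_10y_p \ \ x_11y_p \ \ x_{j_p})$, where $x_{j_p} = vy_p$ and $\{y_p\}$ is a basis.

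The main obstacle is exactly this depth-transport: conjugation inside $V_{n,[x_1]}$ or $V_{n,[x_j]}$ can never turn a full cone into a proper subcone, so the coarse links alone cannot produce a transposition $(x_10 \ \ x_jt)$ with $t$ nonempty. The resolution is to create the depth first on the $x_1$-side, where $V_{n,[x_1]}$ gives complete freedom, and then let the link carry it across via the tail-preserving substitution $x_10w \mapsto x_jw$. A secondary technical point, specific to part~(ii), is parity: at each step the conjugating element of $V_{n,[x_1]}$ must be chosen even, which is always possible because the complement of the two relevant cones inside $x_1\mathfrak{C}_n$ supports a transposition, so the parity can be corrected without disturbing the cone constraints. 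Producing in this way all generators required by Lemma~\ref{lem:transpositions_strong} gives $H = V_n$ and $K = V_n'$.
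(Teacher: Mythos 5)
Your proposal is correct, and its skeleton is the same as the paper's: both proofs reduce the proposition to Lemma~\ref{lem:transpositions_strong} and manufacture the required transpositions (resp.\ three\-/cycles) by creating depth inside the cone $x_1\mathfrak{C}_n$, where $V_{n,[x_1]}$ (resp.\ $V'_{n,[x_1]}$) acts freely, and then transporting it across a link by conjugation; your depth\-/transport claim is exactly the mechanism behind the paper's identities $(x_10 \ \ x_iw) = (x_10 \ \ x_11w)^{(x_11 \ \ x_i)}$ with $(x_11 \ \ x_i) = (x_10 \ \ x_i)^{(x_10 \ \ x_11)}$. The genuine difference is the cut\-/off. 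The paper takes $k = \max_i |x_i|+1$, so every $v$ with $|v| \geq k$ properly extends a unique basis element; your Case~C (where $v$ is a proper prefix of several $x_j$) never arises, and the proof is a short two\-/case dichotomy. You take $k = |x_10|$, which forces the prefix case, handled by the splitting identity $(x_10 \ \ v) = \prod_p (x_10 y_p \ \ x_{j_p})$. What your choice buys is an exactly matched application of the lemma: Lemma~\ref{lem:transpositions_strong} is stated with $k = |u_0|$, so the paper's concluding citation, read literally, exhibits only the generators with $|v| \geq \max_i|x_i|+1$ --- a proper sub\-/collection whenever $x_1$ is not of maximal length in the basis --- and your Case~C is precisely the patch that repairs this small, fixable mismatch. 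Two minor remarks: in part~(ii) the paper avoids your parity issue by only ever conjugating by squares of links and by three\-/cycles in $V'_{n,[x_1]}$, which are automatically even, and in any case your conjugator can simply be taken to be $\bigl((20 \ \ 0s)(21 \ \ 1s')\bigr)_{[x_1]}$, a product of two disjoint transpositions and hence already even; and your dismissal of even $n$ in part~(ii) as ``reducing to~(i)'' does require the one\-/line observation that $(x_10 \ \ x_11 \ \ x_i)$ together with $(x_10 \ \ x_11) \in V_{n,[x_1]}$ recovers the transposition links, though the paper itself also treats part~(ii) only for odd $n$.
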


\begin{proof}
Let 
\[
k = \max\{ |x_i| \mid 1 \leq i \leq \ell \}+1.
\] 

We begin with (i). Write
\[
H = \<V_{n, [x_1]}, \{ (x_10 \ \ x_i) \mid 1 < i \leq \ell \} \>.
\] 
Let $v \in X_n$ such that $x_10 \perp v$ and $|v| \geq k$. We will prove that $(x_10 \ \ v) \in H$. Write $v = x_iw$ where $1 \leq i \leq \ell$, noting that $|w| \geq 1$. If $i=1$, then 
\[
(x_10 \ \ v) = (x_10 \ \ x_1w) \in V_{n, [x_1]} \leq H.
\] 
If $i > 1$, then 
\[
(x_10 \ \ v) = (x_10 \ \ x_iw) = (x_10 \ \ x_11w)^{(x_11 \ x_i)} \in H,
\]
since
\[
(x_11 \ \ x_i) = (x_10 \ \ x_i)^{(x_10 \ \ x_11)} \in H.
\]
Therefore, by Lemma~\ref{lem:transpositions_strong}(i), $H = V_n$.

We now assume that $n$ is odd and turn to (ii). Write
\[
K = \<V'_{n, [x_1]}, \{ (x_10 \ \ x_11 \ \ x_i) \mid 1 < i \leq \ell \} \>.
\]
Let $v \in X_n$ such that $x_10 \perp v$, $x_11 \perp v$ and $|v| \geq k$. We will prove that $(x_10 \ \ x_11 \ \ v) \in K$. As above, write $v = x_iw$ where $1 \leq i \leq \ell$. If $i=1$, then 
\[
(x_10 \ \ x_11 \ \ v) = (x_10 \ \ x_11 \ \ x_1w) \in V'_{n, [x_1]} \leq K.
\] 
Now assume that $i > 1$. Write $w=w_0w_1$ where $w_0 \in [n]$ and fix $a \in [n] \setminus \{w_0\}$. Then
\[
(x_10 \ \ x_11 \ \ v) = (x_10 \ \ x_11 \ \ x_iw_0w_1) = (x_10 \ \ x_11 \ \ x_12w_1)^{(x_12 \ \ x_iw_0 \ \ x_ia)} \in H
\]
since
\[
(x_12 \ \ x_1w_0 \ \ x_ia) = (x_12 \ \ x_10w_0 \ \ x_10a)^{(x_10 \ \ x_11 \ \ x_i)^2} \in H.
\]
Therefore, by Lemma~\ref{lem:transpositions_strong}(ii), $K=V_n'$.
\end{proof}

We now turn to generating pairs. Let us introduce some notation for particular types of elements that we will often use.

\begin{notation} \label{not:sigma_tau}
Let $d \geq 1$ and $k \geq \lceil{\log_n{d}}\rceil$, where $\lceil\cdot\rceil$ is the usual ceiling function. Define  
\[
\sigma_n(d,k) = (a_0 \ \ a_1 \ \ \cdots \ \ a_{d-1}) \in S^{(k)}_n \quad \text{and} \quad \tau_n(d,k) = (a_{N-d+1} \ \ \cdots \ \ a_{N-1} \ \ a_{N}) \in S^{(k)}_n
\]
where $N = n^k-1$ and $a_i$ is the $n$\=/ary expansion of $i$ with exactly $k$ digits. We will write 
\[
\sigma_n(d) = \sigma_n(d,\lceil{\log_n{d}}\rceil) \quad \text{and} \quad \tau_n(d) = \tau_n(d, \lceil{\log_n{d}}\rceil).
\]
Notice that $\sigma_n(d,k)$ and $\tau_n(d,k)$ are $d$\=/cycles. For example, the element $\beta \in V$ from Figure~\ref{fig:tree_pairs} is 
\[ 
\tau_2(7) = \tau_2(7,3) = (001 \ \ 010 \ \ 011 \ \ 100 \ \ 101 \ \ 110 \ \ 111).
\]
\end{notation}

Fix primes $p$ and $q$ satisfying 
\begin{equation} \label{eq:primes}
\tfrac{1}{4}n^3 \leq p < \tfrac{1}{2}n^3 \quad \text{and} \quad \tfrac{3}{4}n^3 < q < n^3.
\end{equation}
If $n \geq 5$, then Theorem~\ref{thm:bertand} guarantees that this is possible and it is straightforward to find suitable primes when $1 \leq r < n \leq 4$. When $n=2$ we insist that $p=2$. (The primes $p$ and $q$ depend on $n$, but for clarity of notation we do not give them a subscript of $n$.) Let 
\begin{equation} \label{eq:zeta_beta}
\zeta = \sigma_n(p,3) \quad \text{and} \quad \beta = \tau_n(q,3). 
\end{equation}

Lemma~\ref{lem:permutations_fact} has the following consequence.

\begin{corollary} \label{cor:permutations_fact}
$\<\zeta,\beta\> = A^{(3)}_n$
\end{corollary}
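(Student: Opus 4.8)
The plan is to relabel so that Corollary~\ref{cor:permutations_fact} becomes a statement about the ordinary alternating group $A_{n^3}$ and then read it off from Lemma~\ref{lem:permutations_fact}. Writing $N = n^3-1$, I would identify $X_n^{(3)} = \{a_0,\dots,a_N\}$ with $\{1,2,\dots,n^3\}$ via $a_i \mapsto i+1$; this identifies $S_n^{(3)}$ with $S_{n^3}$ and $A_n^{(3)}$ with $A_{n^3}$. Under this relabelling the generators from \eqref{eq:zeta_beta} become genuine cycles in the sense of Lemma~\ref{lem:permutations_fact}: $\zeta = \sigma_n(p,3)$ is $(1 \ 2 \ \cdots \ p)$ and $\beta = \tau_n(q,3)$ is the length-$q$ cycle $(n^3-q+1 \ \cdots \ n^3)$. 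Thus $\langle\zeta,\beta\rangle$ is exactly the group in Lemma~\ref{lem:permutations_fact} with ambient degree $n^3$, first-cycle length $b=p$, and second-cycle start $a = n^3-q+1$.

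First I would verify the hypotheses of Lemma~\ref{lem:permutations_fact}. The degree condition holds since $n^3 \ge 8 \ge 7$ for all $n \ge 2$. For the requirement $1 < a \le b < n^3$: from \eqref{eq:primes} we get $b = p < \tfrac12 n^3 < n^3$ immediately, and $a = n^3 - q + 1 > 1$ is just $q < n^3$; the only inequality needing a word is $a \le b$, which is equivalent to $p + q \ge n^3 + 1$. This follows because $p \ge \tfrac14 n^3$ and $q > \tfrac34 n^3$ force $p + q > n^3$, and hence $p+q \ge n^3+1$ as all three quantities are integers. Lemma~\ref{lem:permutations_fact} then gives $A_n^{(3)} = A_{n^3} \le \langle\zeta,\beta\rangle$.

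For the reverse inclusion I would argue by parity. Since $\zeta$ and $\beta$ are a $p$-cycle and a $q$-cycle, they are even permutations precisely when $p$ and $q$ are odd. When $n \ge 3$ we have $p \ge \tfrac14 n^3 > 6$ and $q > \tfrac34 n^3 > 6$, so both $p$ and $q$ are odd primes; hence $\zeta,\beta \in A_n^{(3)}$, giving $\langle\zeta,\beta\rangle \le A_n^{(3)}$ and therefore equality.

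The step I expect to require the most care is precisely this parity half in the remaining case $n=2$: there \eqref{eq:primes} together with the convention $p=2$ makes $\zeta = (a_0 \ a_1)$ a \emph{transposition}, so $\langle\zeta,\beta\rangle$ contains an odd permutation and in fact equals $S_2^{(3)}$ rather than $A_2^{(3)}$. So the clean equality in the corollary is genuinely a statement for $n \ge 3$; for $n=2$ one still has the containment $A_2^{(3)} \le \langle\zeta,\beta\rangle$ coming from Lemma~\ref{lem:permutations_fact}, and I would flag that this is the point at which the two halves of the argument must be reconciled (noting that $S_2^{(3)} \supseteq A_2^{(3)}$ suffices for the intended downstream use).
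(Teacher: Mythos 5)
Your proof is correct and is, in substance, the paper's own argument: the paper offers no proof beyond the sentence ``Lemma~\ref{lem:permutations_fact} has the following consequence,'' and what you write out --- the relabelling $a_i \mapsto i+1$, the verification of the hypotheses $1 < n^3-q+1 \le p < n^3$ (via $p+q > n^3$ from \eqref{eq:primes}), and the parity argument for the reverse inclusion when $n \ge 3$ --- is exactly the content that sentence suppresses. Your flag about $n=2$ is not merely a point of care: it identifies a genuine (if harmless) erratum in the paper. With the paper's convention that $p=2$ when $n=2$, the element $\zeta = (000 \ \ 001)$ is an odd permutation, so $\langle\zeta,\beta\rangle = S_2^{(3)} \ne A_2^{(3)}$; indeed the paper itself records $S^{(3)} = \langle (000 \ \ 001), (001 \ \ 010 \ \ \cdots \ \ 111) \rangle$ in Section~\ref{ss:generators_v}, and Corollary~\ref{cor:permutations_fact_nv} with $m=1$ asserts the same for the identical pair of elements, both of which contradict Corollary~\ref{cor:permutations_fact} as stated at $n=2$. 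You are also right that nothing downstream breaks: the proof of Proposition~\ref{prop:alpha_beta}(i) only needs the containment $A_n^{(3)} \le \langle\zeta,\beta\rangle$, and part~(ii) assumes $n$ odd, where your parity argument yields the genuine equality.
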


Let us fix two further elements. Recall the definition of $\delta$ and $\delta'$ in \eqref{eq:delta}. Write
\begin{equation} \label{eq:alpha}
\alpha=\delta_{[n-1]}\zeta \quad \text{and} \quad \alpha'=\delta'_{[n-1]}\zeta. 
\end{equation}

We can now state our preferred generating pairs for the Higman--Thompson groups.

\begin{proposition} \label{prop:alpha_beta} \quad
\begin{enumerate}
\item $V_n = \<\alpha,\beta\>$
\item $V_n' = \<\alpha',\beta\>$ if $n$ is odd.
\end{enumerate} 
\end{proposition}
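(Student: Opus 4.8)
The plan is to deduce both parts from Proposition~\ref{prop:delta_symmetric} applied with $u=n-1$, so that $k=|u|+1=2$ and $k+1=3$. This gives $V_n=\langle\delta_{[n-1]},A^{(3)}_n\rangle$ and, for $n$ odd, $V_n'=\langle\delta'_{[n-1]},A^{(3)}_n\rangle$. Hence for part~(i) it suffices to show that $\langle\alpha,\beta\rangle$ contains both $A^{(3)}_n$ and $\delta_{[n-1]}$, since the reverse inclusion $\langle\alpha,\beta\rangle\leq V_n$ is automatic; part~(ii) is handled identically, with the extra task of checking $\langle\alpha',\beta\rangle\leq V_n'$.

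The crucial structural observation is that $\delta_{[n-1]}$ and $\zeta$ have disjoint supports. Indeed $\delta_{[n-1]}=\prod_{a=0}^{n-1}\bigl((n-1)0\ \ (n-1)1a\bigr)$ moves only the length-two word $(n-1)0$ and the length-three words $(n-1)1a$, whereas $\zeta=\sigma_n(p,3)$ moves the $p$ numerically smallest words of $X^{(3)}_n$. Using \eqref{eq:primes} I would check that each $(n-1)1a$ has $n$-ary value at least $n^3-n^2+n>\tfrac12 n^3>p$, so it lies outside the support of $\zeta$. Thus $\delta_{[n-1]}$ and $\zeta$ commute, and $\alpha=\delta_{[n-1]}\zeta$ is a product of an $(n+1)$-cycle and a $p$-cycle of coprime orders (by \eqref{eq:primes}, $p$ is a prime exceeding $n+1$ when $n\geq 3$, while $p=2$ and $n+1=3$ when $n=2$). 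Consequently $\langle\alpha\rangle$ contains both $\delta_{[n-1]}=\alpha^{ps}$ and $\zeta=\alpha^{(n+1)t}$ for suitable $s,t$, so $\langle\alpha,\beta\rangle$ contains $\delta_{[n-1]}$, $\zeta$ and $\beta$. Now Corollary~\ref{cor:permutations_fact} gives $\langle\zeta,\beta\rangle=A^{(3)}_n\leq\langle\alpha,\beta\rangle$, and combining this with $\delta_{[n-1]}\in\langle\alpha,\beta\rangle$ and Proposition~\ref{prop:delta_symmetric}(i) yields $V_n=\langle\delta_{[n-1]},A^{(3)}_n\rangle\leq\langle\alpha,\beta\rangle$, proving~(i).

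For part~(ii), with $n$ odd, I would run the identical argument using $\delta'_{[n-1]}$ in place of $\delta_{[n-1]}$: it is an $(n+2)$-cycle supported on $(n-1)0$, $(n-1)2$ and the words $(n-1)1a$, still disjoint from the support of $\zeta$, and of coprime order to it (now $\gcd(p,n+2)=1$, e.g.\ $p\geq 7>n+2=5$ already at $n=3$). As before $\delta'_{[n-1]},\zeta\in\langle\alpha'\rangle$ and $A^{(3)}_n=\langle\zeta,\beta\rangle\leq\langle\alpha',\beta\rangle$, so Proposition~\ref{prop:delta_symmetric}(ii) gives $V_n'\leq\langle\alpha',\beta\rangle$. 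For the reverse inclusion I would observe that, since $n$ is odd, $\beta$ is a $q$-cycle with $q$ odd, while $\alpha'=\delta'_{[n-1]}\zeta$ is a product of an $(n+2)$-cycle and a $p$-cycle with both $n+2$ and $p$ odd; hence both generators are products of an even number of transpositions, so $\langle\alpha',\beta\rangle\leq V_n'$ by Lemma~\ref{lem:transpositions_derived}, giving equality.

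All the arithmetic is concentrated in one place, and that is where the only (mild) obstacle lies: extracting from the bounds \eqref{eq:primes} the two facts that drive everything, namely that $\zeta$ is supported away from the length-three part of $\delta_{[n-1]}$ (resp.\ $\delta'_{[n-1]}$), and that $p$ is coprime to $n+1$ (resp.\ $n+2$). Both reduce to elementary inequalities in $n$, but the small cases $n=2$ (where $p=2$ is forced) and $n=3$ should be verified by hand to confirm that the primes chosen in \eqref{eq:primes} behave as claimed.
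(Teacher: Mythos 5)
Your proposal is correct and follows essentially the same route as the paper's proof: disjoint supports and coprime orders give $\langle\alpha\rangle=\langle\delta_{[n-1]},\zeta\rangle$ (resp.\ $\langle\alpha'\rangle=\langle\delta'_{[n-1]},\zeta\rangle$), and then Corollary~\ref{cor:permutations_fact} together with Proposition~\ref{prop:delta_symmetric} (with $u=n-1$, $k=2$) finishes the argument. The only differences are cosmetic: you spell out the support-disjointness inequalities, which the paper simply asserts, and you add a separate check that $\langle\alpha',\beta\rangle\leq V_n'$ via Lemma~\ref{lem:transpositions_derived}, which the paper gets for free because Proposition~\ref{prop:delta_symmetric}(ii) is an equality.
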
 

\begin{proof}
Since $\zeta$ and $\delta_{[n-1]}$ have disjoint support, these two elements commute. If $n > 2$, then $|\zeta|$ is a prime number at least $\frac{1}{4}n^3 > n+1 = |\delta_{[n-1]}|$, so $|\zeta|$ and $|\delta_{[n-1]}|$ are coprime. When $n=2$, we see that $|\zeta| = 2$ and $|\delta_{[1]}|=3$, which again are coprime. Therefore, $\<\alpha\> = \<\zeta, \delta_{[n-1]}\>$. Consequently, Corollary~\ref{cor:permutations_fact} and Lemma~\ref{prop:delta_symmetric}(i) imply
\[
\<\alpha,\beta\> = \<\delta_{[n-1]}, \zeta, \beta\> = \<\delta_{[n-1]}, A^{(3)}_n\> = V_n.
\]

Similarly, if $n$ is odd, then $\<\alpha'\> = \<\zeta,\delta'_{[n-1]}\>$ and using Lemma~\ref{prop:delta_symmetric}(ii) we see that
\[
\<\alpha',\beta\> = \<\delta'_{[n-1]}, \zeta, \beta\> = \<\delta'_{[n-1]}, A^{(3)}_n\> = V_n'. \qedhere
\]
\end{proof}

\begin{remark}\label{rem:alpha_beta}
Let us comment on the generating pairs in Proposition~\ref{prop:alpha_beta}. First note that $|\alpha|$ and $|\alpha'|$ are coprime to $|\beta|$ since $|\beta|$ is a prime number strictly greater than $\frac{3}{4}n^3$ and all the prime factors of $\alpha$ are at most $\frac{1}{2}n^3$. Next note that each of $\alpha$, $\alpha'$ and $\beta$ fix a point in $X^{(3)}_n$. Finally, observe that when $n=2$, $\alpha$ and $\beta$ are the elements from Figure~\ref{fig:tree_pairs}.
\end{remark}

\subsection{Brin--Thompson groups} \label{ss:generators_brin}

This section very closely mirrors the previous. Fix $m \geq 1$. Let us comment on our notation for $mV$. We adopt the convention of Quick \cite{ref:Quick19}, that for $a \in \{0,1\}$, $\boldsymbol{a}$ denotes $(a,a,\dots,a) \in X^m$ and $\boldsymbol{a}_{\boldsymbol{i}}$ denotes $(\varepsilon,\dots,a,\dots,\varepsilon) \in X^m$, where the $a$ is in the $i$th position (and $\varepsilon$ is the empty word). If $u,v \in X^m$, then $u.v$ denotes componentwise concatenation. (The dot in this notation is necessary to distinguish otherwise ambiguous expressions; for example, in $2V$ we write $\boldsymbol{0}.\boldsymbol{0}_1 = (00,0)$ and $\boldsymbol{00}_1 = (00,\varepsilon)$.)

Brin proved the following in \cite{ref:Brin10}. 

\begin{lemma} \label{lem:transpositions_nv}
The group $mV$ is generated by the set of transpositions.
\end{lemma}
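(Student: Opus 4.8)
The plan is to mirror the proof of the Higman--Thompson analogue (Lemma~\ref{lem:transpositions}) and show that the subgroup $H=\langle (u\ \ v) \mid u,v\in X^m,\ u\perp v\rangle$ generated by the transpositions is all of $mV$. Two preliminary facts are routine. First, \emph{transpositions refine}: for incomparable $u\perp v$ and any basis $\{c_1,\dots,c_r\}$ of $\mathfrak{C}^m$, the cells $u.c_j$ and $v.c_{j'}$ are pairwise incomparable, so the transpositions $(u.c_j\ \ v.c_j)$ have pairwise disjoint support, commute, and a coordinatewise comparison of the action on each $u.c_j\mathfrak{C}^m$ and $v.c_j\mathfrak{C}^m$ gives $(u\ \ v)=\prod_{j=1}^{r}(u.c_j\ \ v.c_j)\in H$. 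Taking $r=2$ with $c_1=\boldsymbol{0}_{\boldsymbol{i}}$, $c_2=\boldsymbol{1}_{\boldsymbol{i}}$ is the basic expansion at coordinate $i$ and is the exact analogue of the identity $(u\ \ v)=\prod_a(ua\ \ va)$ recorded after Lemma~\ref{lem:transpositions}. Second, \emph{every permutation lies in $H$}: if $g$ has basis pair $(A,\pi,A)$ with $\pi\in\mathrm{Sym}(A)$, then writing $\pi$ as a product of transpositions of the finite group $\mathrm{Sym}(A)$ expresses $g$ as a product of transpositions of $mV$.

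The heart of the argument is to reduce an arbitrary $g\in mV$ to a permutation using transpositions. I would induct on the number of non-fixed cells of a minimal basis pair $(A,\sigma,B)$ of $g$, where $A=\{u_1,\dots,u_\ell\}$ and $v_i:=u_i\sigma$. If $g$ is the identity there is nothing to prove, and if $A$ refines $B$ (equivalently, since $|A|=|B|$, if $A=B$) then $g$ is a permutation and we finish by the previous paragraph. Otherwise I would select a domain cell $u_1$ with $u_1\perp v_1$, supplied by Lemma~\ref{lem:incomparables_exist} after refining the basis pair if necessary, and chosen moreover so that $u_1\mathfrak{C}^m$ is not properly contained in any image cell (possible precisely because $A$ does not refine $B$). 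Setting $t=(u_1\ \ v_1)\in H$, the product $gt$ sends $u_1w\mapsto v_1w\mapsto u_1w$, so it fixes $u_1\mathfrak{C}^m$ pointwise; since the cells of $A$ are pairwise disjoint and no image cell strictly contains $u_1\mathfrak{C}^m$, the remaining cells $u_2,\dots,u_\ell$ still act on $gt$ by prefix replacement, giving $gt$ a basis pair with strictly fewer non-fixed cells. By induction $gt\in H$, whence $g=(gt)t\in H$ because $t$ is an involution.

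The main obstacle is exactly the choice of $u_1$ in the inductive step, which has no counterpart in the finite symmetric group. One needs $t=(u_1\ \ v_1)$ simultaneously to be a genuine transposition (so $u_1\perp v_1$) \emph{and} to leave the other cells acting by prefix replacement (so no image cell strictly contains $u_1\mathfrak{C}^m$, otherwise $t$ fragments that cell), and these two requirements can pull against each other: for a straddling cell $u_1$ one may find $v_1\subsetneq u_1$, and refining to restore incomparability can in turn spoil the containment condition. Managing this interplay of comparabilities between domain cells and image cells, via the refinement identity above, so that the non-fixed cell count genuinely drops, is the one delicate point; everything else is formal.

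Finally, it is worth recording a one-line alternative that is available because all the ingredients are already in hand. The transpositions form a single conjugacy class of $mV$: given $u\perp v$ and $u'\perp v'$, Lemma~\ref{lem:incomparables_extend} extends $\{u,v\}$ and $\{u',v'\}$ to bases, which after refining the other cells to equal size are matched by an element of $mV$ conjugating $(u\ \ v)$ to $(u'\ \ v')$. Hence $H$ is a nontrivial normal subgroup of the simple group $mV$, so $H=mV$. I would nonetheless favour the constructive argument above, since the explicit refinement identity it yields is exactly what is needed to set up the $mV$ analogues of Proposition~\ref{prop:delta_symmetric} and the subsequent generating pairs.
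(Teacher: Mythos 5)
You are supplying a proof where the paper supplies only a citation: its entire ``proof'' of Lemma~\ref{lem:transpositions_nv} is the sentence attributing it to Brin \cite{ref:Brin10}. Judged on its own terms, your primary, constructive argument has a genuine gap, and it is not just the bookkeeping issue you flag. Consider the element $\gamma$ of Figure~\ref{fig:tree_pairs} (for $m\geq 2$, let it act in the first coordinate), with basis pair $\{00,01,1\}\to\{0,10,11\}$, $00\mapsto 0$, $01\mapsto 10$, $1\mapsto 11$. First, your existence claim already fails here: no cell of this pair is both incomparable with its image and not properly contained in an image cell ($00$ and $1$ fail the first condition, $01\subset 0$ fails the second), so one must refine; but refining increases the number of non-fixed cells, so your induction measure is not well-founded. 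Worse, no sequence of choices can ever terminate. Your transposition is always $t=(u_1\ \ v_1)$ with $u_1$ a domain cell, $v_1=u_1\sigma$, and $u_1\perp v_1$. Now $\gamma$ fixes $p=\overline{0}$ and near $p$ acts by deleting a $0$ (a nontrivial germ). The domain cell $w$ containing $p$ always satisfies $p\in w\mathfrak{C}^m\cap(w\sigma)\mathfrak{C}^m$, so $w$ and $w\sigma$ are comparable and $w$ is never eligible to be $u_1$; since $\sigma$ is a bijection and basis cells are pairwise disjoint, neither $u_1\mathfrak{C}^m$ nor $v_1\mathfrak{C}^m$ contains $p$. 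Hence every transposition you multiply by is the identity near $p$, and the nontrivial germ at $p$ persists in $g t_1\cdots t_k$ for all $k$. But a permutation fixing $p$ must fix the cell of its basis containing $p$ and is therefore the identity on a neighbourhood of $p$. So the product is never a permutation, the base case is never reached, and the procedure runs forever. This is exactly the content of the lemma: the hard part is writing the torsion-free, $F$-like elements as products of involutions, and greedy peeling cannot do it.

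Your fallback is much closer to a proof and, given the paper's preliminaries, is the quickest legitimate route, but as written it rests on a false claim: the transpositions do \emph{not} form a single conjugacy class. A transposition $(u\ \ v)$ for which $\{u,v\}$ is itself a basis of $\mathfrak{C}^m$, such as $(\boldsymbol{0}_{\boldsymbol{1}}\ \ \boldsymbol{1}_{\boldsymbol{1}})$, is fixed-point-free, whereas any other transposition has nonempty fixed-point set; conjugate homeomorphisms have homeomorphic fixed-point sets, so these are not conjugate. (This is precisely the case where your recipe ``refine the other cells'' has no other cells to refine; relatedly, a conjugate of a transposition is in general only a \emph{product} of transpositions.) The repair is cheap and uses only your own refinement identity: given a transposition $t=(u\ \ v)$ and $g\in mV$ with basis pair $(A,\sigma,B)$, refine the pair so that the cells of $A$ inside $u\mathfrak{C}^m$ and $v\mathfrak{C}^m$ are matched families $\{u.c_j\}$ and $\{v.c_j\}$; then $t^g=\prod_j\bigl((u.c_j)\sigma\ \ (v.c_j)\sigma\bigr)$ is a product of transpositions, since distinct cells of $B$ are incomparable. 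Thus $H$ is a nontrivial normal subgroup, and simplicity of $mV$ (quoted in the paper's preliminaries) forces $H=mV$. One caveat: the simplicity of $mV$ is proved in \cite{ref:Brin10} \emph{using} generation by transpositions, so this argument inverts the logical order of the source it would replace; it is valid only in a treatment, like this paper's, that takes simplicity as a black box.
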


By arguing as in the proof of Lemma~\ref{lem:transpositions_strong}(i) and Proposition~\ref{prop:links}(i), we obtain the following; we omit the details.

\begin{lemma} \label{lem:transpositions_strong_nv}
Let $u_0 \in X^m$ with $|u_0| = k \geq 2$. Then $mV = \< (u_0 \ \ v) \mid \text{$u_0 \perp v$ and $|v| \geq k$}\>$. 
\end{lemma}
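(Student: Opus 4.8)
The plan is to mirror the proof of Lemma~\ref{lem:transpositions_strong}(i), using Lemma~\ref{lem:transpositions_nv} in place of Lemma~\ref{lem:transpositions}, while handling the one genuinely new feature of $mV$: by Lemma~\ref{lem:incomparables_characterisation}(ii), incomparability in $X^m$ is a coordinatewise condition, so two cells may cross (intersect without nesting). Write $H = \<(u_0 \ \ v) \mid u_0 \perp v,\ |v| \geq k\>$ and set $K = \max_{1\leq i\leq m}|u_{0,i}| \geq k$. Call a word $z \in X^m$ \emph{$K$-deep} if each coordinate $z_i$ has length at least $K$. The purpose of this notion is the clean dichotomy it restores: if $z$ is $K$-deep, then $z_i$ is comparable to $u_{0,i}$ precisely when $u_{0,i}$ is a prefix of $z_i$, so by Lemma~\ref{lem:incomparables_characterisation}(ii) we have $u_0 \perp z$ if and only if $z \notin u_0\mathfrak{C}^m$.

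First I would reduce to $K$-deep transpositions. By Lemma~\ref{lem:transpositions_nv} it suffices to show that $H$ contains every transposition. Given $a \perp b$, refining the cell-swap $(a \ \ b)$ by the basis $(\{0,1\}^K)^m$ yields $(a \ \ b) = \prod_{s} (a.s \ \ b.s)$ with $s$ running over $(\{0,1\}^K)^m$; these factors have pairwise disjoint support, each is a transposition (since $a_i \perp b_i$ for some $i$ forces $a_is_i \perp b_is_i$), and each is $K$-deep of length at least $K \geq k$. Hence it is enough to prove $(v \ \ w) \in H$ for all incomparable $K$-deep $v, w$.

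The heart of the argument is then a case analysis according to how $v$ and $w$ sit relative to the brick $u_0\mathfrak{C}^m$, paralleling the $V_n$ cases $u_0 \notin \{v_1,w_1\}$, $v_1=w_1=u_0$, and $v_1 \neq w_1$. If $v, w \notin u_0\mathfrak{C}^m$, then $u_0 \perp v$ and $u_0 \perp w$ by the dichotomy, so $(v \ \ w) = (u_0 \ \ v)^{(u_0 \ \ w)} \in H$. If exactly one lies in the brick, say $v = u_0.v'$ and $w \notin u_0\mathfrak{C}^m$, I would choose a $K$-deep word $u_1$ with $u_1 \perp u_0$ and $u_1 \perp w$ (such a cell exists since $u_0\mathfrak{C}^m \cup w\mathfrak{C}^m$ is a proper subset of $\mathfrak{C}^m$), observe that the brick-swap $(u_0 \ \ u_1) \in H$ sends $u_0.v' \mapsto u_1.v'$ while fixing $w$, and conclude $(v \ \ w) = (u_1.v' \ \ w)^{(u_0 \ \ u_1)}$ with $(u_1.v' \ \ w) = (u_0 \ \ w)^{(u_0 \ \ u_1.v')} \in H$. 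If both lie in the brick, say $v = u_0.v'$ and $w = u_0.w'$, I would pick a $K$-deep $u_1 \perp u_0$ and use $(u_0 \ \ u_1)$ to transport $(v \ \ w)$ to $(u_1.v' \ \ u_1.w')$, which lies in $H$ because $(u_1.v' \ \ u_1.w') = (u_0 \ \ u_1.w')^{(u_0 \ \ u_1.v')}$ and both $u_1.v'$ and $u_1.w'$ are incomparable to $u_0$. In each case every transposition that appears is a valid generator of $H$: its entry other than $u_0$ is incomparable to $u_0$ and has length at least $k$.

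The main obstacle is precisely the crossing phenomenon noted above: in $V_n$ the length-$k$ prefix of a word of length at least $k$ determines its incomparability with $u_0$, whereas in $X^m$ no single coordinate controls $\perp$. Restricting to $K$-deep words, which the initial refinement step arranges at no cost, is exactly what recovers the equivalence $u_0 \perp z \Leftrightarrow z \notin u_0\mathfrak{C}^m$ and lets the $V_n$ conjugation identities carry over; the only other point needing care is the existence of the $K$-deep intermediary cell $u_1$ in the complement, which follows from the routine observation that finitely many proper bricks cannot cover $\mathfrak{C}^m$.
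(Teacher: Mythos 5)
Your proof is correct and takes essentially the same route as the paper, whose entire proof of this lemma is the remark that one should argue as in Lemma~\ref{lem:transpositions_strong}(i): your $K$\=/deep refinement followed by the three-case conjugation analysis according to how $v,w$ sit relative to the brick $u_0\mathfrak{C}^m$ is precisely that argument, correctly adapted to the coordinatewise incomparability criterion of Lemma~\ref{lem:incomparables_characterisation}(ii). One caveat: your closing claim that ``finitely many proper bricks cannot cover $\mathfrak{C}^m$'' is false as stated (the $2^m$ depth-one bricks do cover it); what you actually need, and what your parenthetical in the second case correctly invokes, is that $u_0\mathfrak{C}^m \cup w\mathfrak{C}^m$ is a proper subset of $\mathfrak{C}^m$, which holds because each of these two bricks has measure at most $2^{-2m} \leq \tfrac{1}{4}$.
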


\begin{proposition} \label{prop:links_nv}
Let $\{ u_1, \dots, u_\ell \}$ be a basis. Then $mV  = \< mV_{[u_1]},  \{ (u_1.\boldsymbol{0} \ \ u_i) \mid 1 < i \leq \ell \} \>$.
\end{proposition}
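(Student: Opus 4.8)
The plan is to mirror the proof of Proposition~\ref{prop:links}(i) almost verbatim, since this statement is its exact Brin--Thompson analogue, only now with the single basepoint $u_1$ rather than $x_1$ and with the componentwise-concatenation notation $u_1.\boldsymbol{0}$ in place of $x_10$. First I would set
\[
H = \< mV_{[u_1]}, \{ (u_1.\boldsymbol{0} \ \ u_i) \mid 1 < i \leq \ell \} \>
\]
and let $k = \max\{ |u_i| \mid 1 \leq i \leq \ell \} + 1$. The goal is to show that $H$ contains every transposition $(u_1.\boldsymbol{0} \ \ v)$ with $u_1.\boldsymbol{0} \perp v$ and $|v| \geq k$, after which Lemma~\ref{lem:transpositions_strong_nv} (applied with $u_0 = u_1.\boldsymbol{0}$, whose length is $|u_1|+1 \leq k-1+1 = k \geq 2$) immediately gives $H = mV$.

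To produce those transpositions I would take $v$ with $u_1.\boldsymbol{0} \perp v$ and $|v| \geq k$, and write $v = u_i.w$ for some $1 \leq i \leq \ell$ using that $\{u_1,\dots,u_\ell\}$ is a basis and $|v| \geq k > |u_i|$ (so every coordinate of $v$ genuinely extends the corresponding coordinate of some basis element $u_i$); here $w \in X^m$ is nonempty in the sense that $|w| \geq 1$. If $i = 1$ then $(u_1.\boldsymbol{0} \ \ v) = (u_1.\boldsymbol{0} \ \ u_1.w)$ lies in $mV_{[u_1]} \leq H$, since both points lie in $u_1\mathfrak{C}^m$ and the transposition is supported there. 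If $i > 1$, I would first note that $(u_1.\boldsymbol{1} \ \ u_i) = (u_1.\boldsymbol{0} \ \ u_i)^{(u_1.\boldsymbol{0} \ \ u_1.\boldsymbol{1})} \in H$, where $(u_1.\boldsymbol{0} \ \ u_1.\boldsymbol{1}) \in mV_{[u_1]} \leq H$, and then conjugate to get $(u_1.\boldsymbol{0} \ \ v) = (u_1.\boldsymbol{0} \ \ u_1.\boldsymbol{1}.w)^{(u_1.\boldsymbol{1} \ \ u_i)} \in H$, using once more that $(u_1.\boldsymbol{0} \ \ u_1.\boldsymbol{1}.w) \in mV_{[u_1]}$.

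The only genuine care needed — and the main obstacle — is checking incomparability at each step in the $m$-coordinate setting, where Lemma~\ref{lem:incomparables_characterisation}(ii) governs $\perp$ coordinatewise rather than the simpler prefix condition of the $V_n$ case. Specifically I must verify that $u_1.\boldsymbol{0}$, $u_1.\boldsymbol{1}$, $u_1.\boldsymbol{1}.w$ and $u_i$ are incomparable in the combinations used: $u_1.\boldsymbol{0} \perp u_1.\boldsymbol{1}$ holds in every coordinate (they diverge in the first extra digit of each component), $u_1.\boldsymbol{1} \perp u_i$ follows because $u_i$ is incomparable with $u_1$ itself when $i > 1$ (distinct basis elements give disjoint cylinders), and the conjugate target $u_1.\boldsymbol{1}.w$ remains incomparable with $u_1.\boldsymbol{0}$ for the same reason. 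These are exactly the translations of the $V_n$ verifications into the product setting, so I expect them to go through routinely; since the argument is otherwise identical to Proposition~\ref{prop:links}(i), the paper's remark that ``we omit the details'' is justified.
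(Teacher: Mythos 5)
Your overall strategy is the right one --- it is exactly what the paper intends when it says Proposition~\ref{prop:links_nv} follows ``by arguing as in'' Lemma~\ref{lem:transpositions_strong}(i) and Proposition~\ref{prop:links}(i) --- and your conjugation computations in the cases $i=1$ and $i>1$ translate correctly to the product setting. But there is a genuine gap at the one step you declare routine: the decomposition $v = u_i.w$. You justify it by ``$|v| \geq k > |u_i|$, so every coordinate of $v$ genuinely extends the corresponding coordinate of some basis element $u_i$''. This inference is false in $X^m$, because the paper defines $|u| = \min\{|u_j| \mid 1 \leq j \leq m\}$: the inequality $|v| > |u_i|$ compares \emph{shortest} coordinates and says nothing about the longer ones. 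Concretely, take $m=2$ and the basis $\{(1,1),\,(1,01),\,(1,001),\,(1,000),\,(0,\varepsilon)\}$ with $u_1 = (1,1)$. Every basis element has $|u_i| \leq 1$, so your $k$ equals $2$. Now $v = (11,00)$ satisfies $|v| = 2 \geq k$ and $u_1.\boldsymbol{0} = (10,10) \perp v$, but $v\mathfrak{C}^2$ meets both $(1,000)\mathfrak{C}^2$ and $(1,001)\mathfrak{C}^2$, so $v$ is not of the form $u_i.w$ for any basis element $u_i$ and any $w \in X^2$. Your argument therefore produces no expression for $(u_1.\boldsymbol{0} \ \ v)$ as an element of your subgroup $H$, yet this transposition lies in the generating set supplied by Lemma~\ref{lem:transpositions_strong_nv} (here $|u_1.\boldsymbol{0}| = 2$ equals your $k$, so there is no escape via the threshold), and the conclusion $H = mV$ does not follow.

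The fix is cheap, but it is a real change rather than a routine check: define $k$ to be $1$ plus the maximum of $|(u_i)_j|$ over \emph{all} coordinates $1 \leq j \leq m$ of all basis elements, not $1$ plus the maximum of the $|u_i|$. Then $|v| \geq k$ forces every coordinate of $v$ to be strictly longer than every coordinate of every $u_i$; choosing any point of $v\mathfrak{C}^m$ and the unique basis cylinder $u_i\mathfrak{C}^m$ containing it, coordinatewise comparability together with these length inequalities shows that $(u_i)_j$ is a proper prefix of $v_j$ for each $j$, so $v = u_i.w$ with $|w| \geq 1$, and the rest of your argument goes through verbatim. (A second, smaller wrinkle is inherited from the paper's own proof of Proposition~\ref{prop:links}(i): Lemma~\ref{lem:transpositions_strong_nv} is stated with threshold exactly $|u_0|$, while membership in $H$ is verified only for $|v| \geq k$, which may be larger; this is harmless because the lemma's proof establishes generation above any threshold at least $|u_0|$, but it deserves a sentence.)
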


Let $X^{m,(k)} = \{ (x_1,\dots,x_m) \in X^m \mid |x_i| = k \}$, and write $mS^{(k)} = \mathrm{Sym}(X^{m,(k)})$. Define
\begin{equation} \label{eq:delta_nv}
\delta  = \prod_{i=1}^{m} (\boldsymbol{0} \ \ \boldsymbol{1}.\boldsymbol{0}_{\boldsymbol{i}})(\boldsymbol{0} \ \ \boldsymbol{1}.\boldsymbol{1}_{\boldsymbol{i}}).
\end{equation}

\begin{lemma} \label{lem:delta_symmetric_nv}
$mV  = \< \delta_{[\boldsymbol{1}]}, mS^{(2)} \>$
\end{lemma}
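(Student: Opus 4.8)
The plan is to mirror the proof of Proposition~\ref{prop:delta_symmetric}(i) and reduce the statement to Lemma~\ref{lem:transpositions_strong_nv}. Writing $H=\<\delta_{[\boldsymbol{1}]},mS^{(2)}\>$ and fixing a base point $u_0\in X^{m,(2)}$, it suffices to prove that every \emph{standard} transposition $(u_0 \ \ v)$ with $u_0\perp v$ and $|v|\geq 2$ lies in $H$; Lemma~\ref{lem:transpositions_strong_nv} then yields $H=mV$. When $m=1$ this is exactly Proposition~\ref{prop:delta_symmetric}(i) with $n=2$ and $u=\boldsymbol{1}$, so I would assume $m\geq 2$, in which case $|X^{m,(2)}|=4^m\geq 16$ leaves ample room for the bookkeeping below. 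Since $mS^{(2)}\leq H$ already supplies every transposition $(u_0 \ \ w)$ with $w\in X^{m,(2)}$, the content is an induction on $\sum_{i}|v_i|$ that realises transpositions reaching ever deeper points.

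For the inductive step I would establish the analogue of the Claim in Proposition~\ref{prop:delta_symmetric}: produce $g\in H$ fixing a reference point and carrying a chosen level-$2$ point $u_1$ to $v$, peeling one letter $a\in\{0,1\}$ off one coordinate $v_i$ at a time. The mechanism for appending the bit $a$ in coordinate $i$ is a power of $\delta_{[\boldsymbol{1}]}$ conjugated by $mS^{(2)}$: one element of $mS^{(2)}$ sends the working point to the hub $\boldsymbol{1}.\boldsymbol{0}=(10,\dots,10)$, an appropriate power of $\delta_{[\boldsymbol{1}]}$ moves the hub to the spoke $\boldsymbol{1}.\boldsymbol{1}.\boldsymbol{0}_{\boldsymbol{i}}$ or $\boldsymbol{1}.\boldsymbol{1}.\boldsymbol{1}_{\boldsymbol{i}}$ (the two transpositions per coordinate in $\delta$ give access to both bits in \emph{every} coordinate), and a final element of $mS^{(2)}$ relocates the level-$2$ parent $\boldsymbol{1}.\boldsymbol{1}$ back, so that the net effect on the tracked point is the appending of $a$ in coordinate $i$. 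Conjugating the resulting deep transposition throughout $X^{m,(2)}$ by $mS^{(2)}$ and iterating then yields all the required $(u_0 \ \ v)$.

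The hard part is that, for $m\geq2$, the element $\delta_{[\boldsymbol{1}]}$ is \emph{not} a permutation of a basis: its defining spoke-transpositions have pairwise overlapping support (for instance the cones of $\boldsymbol{1}.\boldsymbol{0}_{\boldsymbol{1}}$ and $\boldsymbol{1}.\boldsymbol{0}_{\boldsymbol{2}}$ meet), so although $\delta_{[\boldsymbol{1}]}$ cyclically permutes the $2m+1$ \emph{root} points correctly, it does not restrict to a prefix substitution on the hub cone, and distinct coordinates get mixed at depth. Consequently the conjugate $(u_0 \ \ u_1)^g$ is a priori only a \emph{twisted} transposition, and the real work is to show that one nonetheless recovers the genuine standard transposition $(u_0 \ \ v)$. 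I expect this to be handled by a level-by-level bootstrap: carrying in the induction all standard transpositions up to the current depth, one uses them both to transport $\delta_{[\boldsymbol{1}]}$ into position at a deeper cone and to cancel the residual cross-coordinate mixing, so that the transposition produced at the next level is again standard. Verifying that this cancellation closes up correctly is the crux; everything else is the routine cone arithmetic already met in Section~\ref{ss:generators_higman}.
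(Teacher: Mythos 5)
Your reduction is set up sensibly: showing that $H=\<\delta_{[\boldsymbol{1}]},mS^{(2)}\>$ contains every transposition $(u_0 \ \ v)$ with $u_0\perp v$ and $|v|\geq 2$ would indeed finish the proof via Lemma~\ref{lem:transpositions_strong_nv}, and your treatment of $m=1$ (reduce to Proposition~\ref{prop:delta_symmetric}(i) with $n=2$) is exactly right. But for $m\geq 2$ your argument has a genuine gap, and you have flagged it yourself: the ``level-by-level bootstrap'' that is supposed to cancel the cross-coordinate mixing is never constructed, only hoped for, and it is not a technical detail --- it is the entire content of the lemma. Concretely, for $m=2$ write $\delta = (\boldsymbol{0} \ \ \boldsymbol{1}.\boldsymbol{0}_{\boldsymbol{1}})(\boldsymbol{0} \ \ \boldsymbol{1}.\boldsymbol{1}_{\boldsymbol{1}})(\boldsymbol{0} \ \ \boldsymbol{1}.\boldsymbol{0}_{\boldsymbol{2}})(\boldsymbol{0} \ \ \boldsymbol{1}.\boldsymbol{1}_{\boldsymbol{2}})$ and compose left to right: one computes $(0w_1,00w_2)\delta = (10w_1,11w_2)$ while $(0w_1,01w_2)\delta = (00w_1,0w_2)$. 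So $\delta$ does not carry the hub cone onto any cone by prefix substitution (the image of a point depends on deeper letters), and the same holds for every power of $\delta$ and for the conjugate $\delta_{[\boldsymbol{1}]}$. (Being of finite order, $\delta_{[\boldsymbol{1}]}$ is a permutation of \emph{some} finer basis, by the remark in Section~\ref{ss:p_permutations}, but it does not permute the hub and spoke cones, which is what your induction needs.) Consequently the basic move of your inductive step --- ``an appropriate power of $\delta_{[\boldsymbol{1}]}$ moves the hub to the spoke,'' so that conjugation appends one letter in coordinate $i$ --- is unavailable: once $g$ involves $\delta_{[\boldsymbol{1}]}$, the conjugate $(u_0 \ \ u_1)^g$ need not be a transposition at all, and no mechanism is given for recovering a standard one. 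Asserting that you ``expect'' a bootstrap to close this up is precisely the point at which the proof is missing.

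It is worth knowing that the paper does not attempt this computation either: its proof of Lemma~\ref{lem:delta_symmetric_nv} is by citation. For $m\geq 2$ it invokes Quick, who establishes exactly this generation statement in the opening paragraphs of \cite[Section~3]{ref:Quick19} during the proof of \cite[Theorem~1.2]{ref:Quick19}; for $m=1$ it notes $1V=V_2$ and quotes Proposition~\ref{prop:delta_symmetric}(i) with $n=2$, which is the one part of your proposal that is complete and matches the paper. So to make your proof work you would have to reproduce Quick's analysis of how $\<\delta_{[\boldsymbol{1}]},mS^{(2)}\>$ reaches arbitrarily deep bases despite the mixing phenomenon above --- nontrivial work that is the substance of his Section~3 --- or simply cite it, as the paper does.
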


\begin{proof}
If $m \geq 2$, then Quick proves this during \cite[Theorem~1.2]{ref:Quick19}, see the opening paragraphs of \cite[Section~3]{ref:Quick19}. If $m=1$, then $1V = V_2$ and this is Proposition~\ref{prop:delta_symmetric}(i) with $n=2$.
\end{proof}

We introduce notation similar to Notation~\ref{not:sigma_tau}

\begin{notation} \label{not:sigma_tau_nv}
Let $d \geq 1$ and $k \geq \lceil{\log_{2^m}{d}}\rceil$. Let 
\[
\sigma^m(d,k) = (a_0 \ \ a_1 \ \ \cdots \ \ a_{d-1}) \in mS^{(k)} \quad \text{and} \quad \tau^m(d,k) = (a_{N-d+1} \ \ \cdots \ \ a_{N-1} \ \ a_{N}) \in mS^{(k)}
\]
where $N = 2^{mk}-1$ and $a_i$ is the binary expansion of $i$ with exactly $mk$ digits, which we consider an $m$-tuple of length $k$ binary words under the correspondence $y_0 \cdots y_{mk-1} \mapsto (y_0 \cdots y_{k-1}, \dots, y_{(m-1)k} \cdots y_{mk-1})$.  We will write 
\[
\sigma^m(d) = \sigma^m(d,\lceil{\log_{2^m}{d}}\rceil) \quad \text{and} \quad \tau^m(d) = \tau^m(d,\lceil{\log_{2^m}{d}}\rceil).
\]
(Notice that $\sigma^1(d)$ and $\tau^1(d)$ are exactly the elements $\sigma_2(d)$ and $\tau_2(d)$ from Notation~\ref{not:sigma_tau}.)
\end{notation}

By Theorem~\ref{thm:bertand}, we may fix a prime number $q$ and elements $\zeta,\beta \in mV$ satisfying
\begin{equation}\label{eq:primes_zeta_beta_nv}
\tfrac{3}{4}8^m < q < 8^m \quad \text{and} \quad \zeta = \sigma^m(\tfrac{1}{4}\,8^m,3) \quad \text{and} \quad \beta = \tau^m(q,3).
\end{equation}

The next result follows from Lemma~\ref{lem:permutations_fact}, noting that $\beta$ is an odd permutation in $mS^{(3)}$.

\begin{corollary} \label{cor:permutations_fact_nv}
$mS^{(3)} = \<\zeta,\beta\>$
\end{corollary}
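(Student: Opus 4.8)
The plan is to transport the statement into the finite symmetric group $mS^{(3)} = \mathrm{Sym}(X^{m,(3)})$, whose degree is $|X^{m,(3)}| = 8^m$, and then apply Lemma~\ref{lem:permutations_fact}. Using the labelling of Notation~\ref{not:sigma_tau_nv}, I would identify $X^{m,(3)} = \{a_0,\dots,a_N\}$ (where $N = 8^m-1$) with $\{1,2,\dots,8^m\}$ via $a_i \mapsto i+1$. Under this identification $\zeta = \sigma^m(\tfrac14 8^m,3)$ becomes the cycle $(1 \ \ 2 \ \ \cdots \ \ b)$ with $b = \tfrac14 8^m$, while $\beta = \tau^m(q,3)$ becomes $(a \ \ a+1 \ \ \cdots \ \ 8^m)$ with $a = 8^m - q + 1$. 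This puts $\zeta$ and $\beta$ into exactly the shape demanded by Lemma~\ref{lem:permutations_fact}.

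Next I would verify the hypotheses of Lemma~\ref{lem:permutations_fact} for degree $8^m$. The degree condition holds since $m \geq 1$ gives $8^m \geq 8 \geq 7$; the condition $b < 8^m$ is immediate from $b = \tfrac14 8^m$; and $1 < a$ follows from $q < 8^m$. The only inequality that uses the prime bounds is $a \leq b$, that is $8^m - q + 1 \leq \tfrac14 8^m$, equivalently $q \geq \tfrac34 8^m + 1$. This holds because $q > \tfrac34 8^m$ and $\tfrac34 8^m = 3\cdot 2^{3m-2}$ is an integer, so the integer $q$ must be at least $\tfrac34 8^m + 1$. Lemma~\ref{lem:permutations_fact} then yields $\mathrm{Alt}(X^{m,(3)}) \leq \<\zeta,\beta\>$.

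Finally I would upgrade from the alternating group to the full symmetric group by a parity argument. The permutation $\zeta$ is a single cycle of even length $\tfrac14 8^m = 2^{3m-2}$, hence it is an odd permutation, so $\<\zeta,\beta\>$ is not contained in $\mathrm{Alt}(X^{m,(3)})$. Since $\mathrm{Alt}(X^{m,(3)})$ has index $2$ in $mS^{(3)}$ and is already contained in $\<\zeta,\beta\>$, this forces $\<\zeta,\beta\> = mS^{(3)}$.

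I expect no real obstacle here: given Lemma~\ref{lem:permutations_fact}, the corollary reduces to bookkeeping the cycle shapes under the integer labelling plus the single arithmetic check $q \geq \tfrac34 8^m + 1$. The one point that deserves care is the parity step, where one must be sure the generating set genuinely contains an odd permutation; the clean witness is $\zeta$ (a cycle of even length), rather than $\beta$, which is a $q$\=/cycle with $q$ an odd prime and hence an even permutation.
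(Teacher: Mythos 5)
Your proof is correct and is essentially the paper's argument: the paper derives the corollary by invoking Lemma~\ref{lem:permutations_fact} to get $\mathrm{Alt}(X^{m,(3)}) \leq \<\zeta,\beta\>$ and then passes to the full symmetric group on the grounds that the generating pair contains an odd permutation, exactly as you do (your verification of the hypotheses $1 < a \leq b < 8^m$ via $q \geq \tfrac{3}{4}8^m+1$ is just the bookkeeping the paper leaves implicit). One point in your favour: the paper justifies the final step by ``noting that $\beta$ is an odd permutation in $mS^{(3)}$'', which is a slip --- $\beta$ is a $q$\=/cycle with $q$ an odd prime, hence an even permutation --- and your identification of $\zeta$, a cycle of even length $\tfrac{1}{4}8^m = 2^{3m-2}$, as the genuine odd witness is the correct repair.
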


To give our preferred generating pair of the Brin--Thompson groups, we write
\begin{equation} \label{eq:alpha_nv}
\alpha=\delta_{[\boldsymbol{1}]}\zeta.
\end{equation}

\begin{proposition} \label{prop:alpha_beta_nv} 
$mV = \<\alpha,\beta\>$
\end{proposition}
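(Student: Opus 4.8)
The plan is to follow the template established for the Higman--Thompson case in Proposition~\ref{prop:alpha_beta}, which factored the generating pair proof through the decomposition $\langle\alpha\rangle = \langle\zeta, \delta_{[u]}\rangle$ and an appeal to the relevant ``delta-symmetric'' generating result. Here $\alpha = \delta_{[\boldsymbol{1}]}\zeta$, so the first step is to show that $\zeta$ and $\delta_{[\boldsymbol{1}]}$ commute and have coprime orders, so that $\langle\alpha\rangle = \langle\zeta, \delta_{[\boldsymbol{1}]}\rangle$ and hence $\langle\alpha,\beta\rangle = \langle\delta_{[\boldsymbol{1}]}, \zeta, \beta\rangle$. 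For the commuting: $\zeta = \sigma^m(\tfrac14 8^m, 3)$ is supported on points of $X^{m,(3)}$, whereas $\delta_{[\boldsymbol{1}]}$ moves only points beginning with the prefix $\boldsymbol{1}$ at depth strictly greater than that governing $X^{m,(3)}$; I would check their supports are disjoint exactly as in the $V_n$ argument. For coprimality, $\delta_{[\boldsymbol{1}]}$ is a product of $m$ disjoint transpositions composed into the cycle structure coming from \eqref{eq:delta_nv}, so I need its order and must verify it shares no prime factor with $|\zeta|$; since $\zeta$ is a $\tfrac14 8^m$-cycle its order is $2^{3m-2}$, a power of $2$, so I only need the order of $\delta_{[\boldsymbol{1}]}$ to be odd.

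Computing the order of $\delta$ is the natural next step. By \eqref{eq:delta_nv}, $\delta = \prod_{i=1}^m (\boldsymbol{0} \ \ \boldsymbol{1}.\boldsymbol{0}_{\boldsymbol{i}})(\boldsymbol{0} \ \ \boldsymbol{1}.\boldsymbol{1}_{\boldsymbol{i}})$; because each successive pair of transpositions shares the fixed point $\boldsymbol{0}$, this product collapses into a single cycle. I expect $\delta$ to be a $(2m+1)$-cycle on the set $\{\boldsymbol{0}\} \cup \{\boldsymbol{1}.\boldsymbol{0}_{\boldsymbol{i}}, \boldsymbol{1}.\boldsymbol{1}_{\boldsymbol{i}} : 1 \leq i \leq m\}$, mirroring how $\delta$ in \eqref{eq:delta} was an $(n+1)$-cycle. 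A $(2m+1)$-cycle has odd order $2m+1$, which is coprime to the $2$-power order of $\zeta$, as required. I would verify the cycle structure by tracking the image of $\boldsymbol{0}$ and of each $\boldsymbol{1}.\boldsymbol{0}_{\boldsymbol{i}}$ through the composition, reading the transpositions left to right under the paper's convention.

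With $\langle\alpha,\beta\rangle = \langle\delta_{[\boldsymbol{1}]}, \zeta, \beta\rangle$ established, the conclusion is immediate: Corollary~\ref{cor:permutations_fact_nv} gives $\langle\zeta,\beta\rangle = mS^{(3)}$, hence $\langle\delta_{[\boldsymbol{1}]}, \zeta, \beta\rangle = \langle\delta_{[\boldsymbol{1}]}, mS^{(3)}\rangle$. Since $mS^{(2)} \leq mS^{(3)}$ (any permutation of the depth-2 basis is a permutation of the refined depth-3 basis), we have $\langle\delta_{[\boldsymbol{1}]}, mS^{(3)}\rangle \geq \langle\delta_{[\boldsymbol{1}]}, mS^{(2)}\rangle = mV$ by Lemma~\ref{lem:delta_symmetric_nv}, and the reverse inclusion is trivial. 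Thus $\langle\alpha,\beta\rangle = mV$.

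The main obstacle is the coprimality bookkeeping, specifically nailing down that $\delta_{[\boldsymbol{1}]}$ really is an odd-order $(2m+1)$-cycle rather than some product whose order accidentally picks up a factor of $2$; everything downstream relies on this. The commuting claim and the final group-theoretic chain are routine once the support of $\delta_{[\boldsymbol{1}]}$ is correctly identified as lying strictly below the prefix $\boldsymbol{1}$ at a depth disjoint from the support of $\zeta$, so I would spend the bulk of the care on the explicit cycle computation for $\delta$.
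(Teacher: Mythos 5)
You follow the paper's own route exactly: show that $\zeta$ and $\delta_{[\boldsymbol{1}]}$ commute and have coprime orders, deduce $\langle\alpha\rangle = \langle\zeta,\delta_{[\boldsymbol{1}]}\rangle$, and finish via Corollary~\ref{cor:permutations_fact_nv} and Lemma~\ref{lem:delta_symmetric_nv} (your observation that $mS^{(2)} \leq mS^{(3)}$ inside $mV$ is correct and is needed). The disjoint-support claim and $|\zeta| = 2^{3m-2}$ are also fine. The gap is precisely at the step you singled out as the crux, and it is fatal as stated: for $m \geq 2$ the $2m+1$ words $\boldsymbol{0}$, $\boldsymbol{1}.\boldsymbol{0}_{\boldsymbol{i}}$, $\boldsymbol{1}.\boldsymbol{1}_{\boldsymbol{i}}$ are \emph{not} pairwise incomparable, so ``$\delta$ is a $(2m+1)$-cycle on this set'' is not even meaningful, and the collapse identity $(a \ \ b_1)(a \ \ b_2)\cdots = (a \ \ b_1 \ \ b_2 \ \ \cdots)$, which is valid only for permutations of pairwise disjoint cones, cannot be applied. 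Concretely, for $i \neq j$ the words $\boldsymbol{1}.\boldsymbol{0}_{\boldsymbol{i}}$ and $\boldsymbol{1}.\boldsymbol{1}_{\boldsymbol{j}}$ are comparable in every coordinate ($10$ against $1$, $1$ against $11$, and $1$ against $1$ elsewhere), so by Lemma~\ref{lem:incomparables_characterisation}(ii) their cones intersect; in $2V$, for instance, $(10,1)\mathfrak{C}^2 \cap (1,11)\mathfrak{C}^2 = (10,11)\mathfrak{C}^2$.

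Worse than the argument failing, the conclusion it is meant to justify is false: the element $\delta$ of \eqref{eq:delta_nv} has infinite order for $m \geq 2$. In $2V$, write $\delta = c_1c_2$ where $c_1 = ((0,0) \ \ (10,1) \ \ (11,1))$ and $c_2 = ((0,0) \ \ (1,10) \ \ (1,11))$ (each of these two triples \emph{is} pairwise incomparable, so these $3$-cycles make sense), reading the product with the paper's right-action convention; the other reading is handled symmetrically. A direct check gives, for all $k \geq 0$,
\[
(01^k\overline{0},\ \overline{0})\,\delta = (101^k\overline{0},\ 11\overline{0})
\qquad \text{and} \qquad
(101^k\overline{0},\ 11\overline{0})\,\delta = (01^{k+1}\overline{0},\ \overline{0}),
\]
so the $\delta$-orbit of $(\overline{0},\overline{0})$ is infinite. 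Consequently there are no coprime orders to exploit; in fact, since every power of $\alpha$ has the form $\delta_{[\boldsymbol{1}]}^k\zeta^k$ with the two factors supported on disjoint sets, one gets $\zeta \notin \langle\alpha\rangle$, and the equality $\langle\alpha\rangle = \langle\zeta,\delta_{[\boldsymbol{1}]}\rangle$ genuinely fails. You should know that the paper's own proof asserts ``$|\delta_{[\boldsymbol{1}]}| = 2m+1$'' with no more justification than you offer, so you have reproduced its argument faithfully; your attempt to verify that assertion is exactly what exposes the problem with the element as defined in \eqref{eq:delta_nv}. (The definition is sound for $m=1$, where $\delta$ reduces to the $3$-cycle $(0 \ \ 10 \ \ 11)$, which is why the $V_n$ element in \eqref{eq:delta} and Proposition~\ref{prop:alpha_beta} are unaffected.) To repair the proof one must replace $\delta$ by an element of odd finite order --- for example an honest $(2m+1)$-cycle on $2m+1$ pairwise incomparable words --- for which the generation statement of Lemma~\ref{lem:delta_symmetric_nv} can still be established; with such a $\delta$, both your argument and the paper's go through.
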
 

\begin{proof}
Note that $\zeta$ and $\delta_{[\boldsymbol{1}]}$ commute and their orders are coprime ($|\zeta|=\frac{1}{4}8^m$ is a power of two and $|\delta_{[\boldsymbol{1}]}|=2m+1$ is odd). Therefore, $\<\alpha\> = \<\zeta, \delta_{[\boldsymbol{1}]}\>$, so Corollary~\ref{cor:permutations_fact_nv} and Lemma~\ref{lem:delta_symmetric_nv} imply that
\[
\<\alpha,\beta\> = \<\delta_{[\boldsymbol{1}]}, \zeta, \beta\> = \<\delta_{[\boldsymbol{1}]}, mS^{(3)}\> = mV. \qedhere
\]
\end{proof}

\begin{remark}\label{rem:alpha_beta_nv}
Observe that the $\alpha$ and $\beta$ have coprime order and each have a fixed point in $mX^{(3)}$. When $m=1$, these are exactly the elements from Figure~\ref{fig:tree_pairs}.
\end{remark}

\section{Proofs}\label{s:proofs}

In this section, we will prove Theorems~\ref{thm:higman} and~\ref{thm:brin}. In the proof of Theorems~\ref{thm:higman} and~\ref{thm:brin} we will fix the group $G$ as one of $V_n$, $V_n'$ or $mV$ and the element $g$ as a nontrivial element of $G$, then will construct an element $h$, which we prove satisfies $\<g,h\> = G$. To help the reader visualise the element $h$ that we will construct in this proof, we begin this section with an example where, in a special case, we construct this element $h$, depict it via a tree pair and outline the general idea of why $\<g,h\>=G$.

\begin{example}\label{ex:v}
Let $g = (00 \ \ 01) \in V$. Let $x = \alpha_{[00]}$ and $y = \beta_{[01]}$. We have given the tree pairs for $g$ and the product $xy$ in Figures~\ref{fig:g} and~\ref{fig:xy}. Since the subgroup $\<g,xy\>$ stabilises the partition $\{ 00\mathfrak{C}, 01\mathfrak{C}, 1\mathfrak{C} \}$ we deduce that $\<g,xy\>$ is not the entire group $V$. The key idea is to modify $xy$ to obtain an element $h$ such that $\<g,h\> = V$.

For long binary sequences, we adopt notation such as $0^31^20^4$ for $000110000$. Let
\begin{gather*}
z_0 = (00 \ \ 01 \ \ 10 \ \ 11)_{[0^310]} \cdot (0^310^3 \ \ 010^3) \\[3pt]
z_1 = (0000 \ \ 0001 \ \ \cdots \ \ 1010)_{[0^31^2]} \cdot (0^31^20^4 \ \ 1)
\end{gather*}
and define the element $h = xyz_0z_1$, which is depicted in Figure~\ref{fig:h}. Note that the leaves in the tree pair for $h$ that are moved by $x$, $y$, $z_0$ and $z_1$ are labelled by $a_i$, $b_i$, $c_i$ and $d_i$, respectively.

Let us make some observations about the element $h$. First note that $x$, $y$, $z_0$ and $z_1$ have orders 6, 7, 5 and 11, respectively. In particular, these elements have coprime orders. Moreover, these elements have disjoint support and therefore commute. Consequently, each of these elements is a suitable power of $h$ and, accordingly, is contained in $\<g, h\>$. 

We now outline the argument for why $\<g,h\> = V$. Proposition~\ref{prop:alpha_beta} implies that 
\[
V_{[00]} = \<\alpha_{[00]},\beta_{[00]}\> = \<x, y^g\> \leq \<g,h\>.
\]
In addition, $V_{[01]} = (V_{[00]})^g \leq \<g,h\>$. Using appropriate elements from $V_{[00]}$ and $V_{[01]}$ we can show that
\[
(000 \ \ 01) \in \< V_{[00]}, V_{[01]}, z_0 \> \leq \<g,h\> \text{ \ \ and \ \ } (000 \ \ 1) \in \< V_{[00]}, z_1 \> \leq \<g,h\>.
\]
This shows that 
\[
\< V_{[00]}, (000 \ \ 01), (000 \ \ 1) \> \leq \<g,h\>,
\]
so Proposition~\ref{prop:links} implies that $\<g,h\> = V$.
\end{example}

\begin{figure}[p]
\[ 
  \begin{tikzpicture}[
        inner sep=0pt,
        baseline=-30pt,
        level distance=20pt,
        level 1/.style={sibling distance=60pt},
        level 2/.style={sibling distance=30pt}
      ]
      \node (root) [circle,fill] {}
      child {node (0) [circle,fill] {}
        child {node (00) {$1$}}
        child {node (01) {$2$}}}
      child {node (1) {}};
  \end{tikzpicture}
    \to
  \begin{tikzpicture}[
        inner sep=0pt,
        baseline=-30pt,
        level distance=20pt,
        level 1/.style={sibling distance=60pt},
        level 2/.style={sibling distance=30pt}
      ]
      \node (root) [circle,fill] {}
      child {node (0) [circle,fill] {}
        child {node (00) {$2$}}
        child {node (01) {$1$}}}
      child {node (1) {}};
    \end{tikzpicture}
  \]
\caption{Tree pair for $g = (00 \ \ 01) \in V$} \label{fig:g}
\end{figure}

\begin{figure}[p]
\[
  \begin{tikzpicture}[
      scale=0.9,
      inner sep=0pt,
      baseline=-30pt,
      level distance=20pt,
      level 1/.style={sibling distance=120pt}
    ]
    \node (root) [circle,fill] {}
    child {[sibling distance=120pt] node (0) [circle,fill] {}
      child {[sibling distance=60pt] node (00) [circle,fill] {}
        child {[sibling distance=30pt] node (000) [circle,fill] {}
          child {[sibling distance=15pt] node (0000) [circle, fill] {}
            child {node (00000) {$a_1$}}
            child {node (00001) {$a_2$}}}
          child {[sibling distance=15pt] node (0001) {}}}
        child {[sibling distance=30pt] node (001) [circle, fill] {}
          child {[sibling distance=15pt] node (0010) {$a_3$}}
          child {[sibling distance=15pt] node (0011) [circle, fill] {}
            child {node (00110) {$a_4$}}
            child {node (00111) {$a_5$}}}}} 
      child {[sibling distance=60pt] node (01) [circle, fill] {} 
        child {[sibling distance=30pt] node (010) [circle, fill] {}
          child {[sibling distance=15pt] node (0100) [circle, fill] {}
            child {node (01000) {}}
            child {node (01001) {$b_1$}}}
          child {[sibling distance=15pt] node (0101) [circle,fill] {}
            child {node (01010) {$b_2$}}
            child {node (01011) {$b_3$}}}}
        child {[sibling distance=30pt] node (011) [circle, fill] {}
          child {[sibling distance=15pt] node (0110) [circle, fill] {}  
            child {node (01100) {$b_4$}}  
            child {node (01101) {$b_5$}}}
          child {[sibling distance=15pt] node (0111) [circle, fill] {}   
            child {node (01110) {$b_6$}}  
            child {node (01111) {$b_7$}}}}}}           
    child {node (1) {}};
  \end{tikzpicture}
\quad \to \!\!\!\!\!\!\!\!
  \begin{tikzpicture}[
      scale=0.9,
      inner sep=0pt,
      baseline=-30pt,
      level distance=20pt,
      level 1/.style={sibling distance=120pt}
    ]
    \node (root) [circle,fill] {}
    child {[sibling distance=120pt] node (0) [circle,fill] {}
      child {[sibling distance=60pt] node (00) [circle,fill] {}
        child {[sibling distance=30pt] node (000) [circle,fill] {}
          child {[sibling distance=15pt] node (0000) [circle, fill] {}
            child {node (00000) {$a_2$}}
            child {node (00001) {$a_1$}}}
          child {[sibling distance=15pt] node (0001) {}}}
        child {[sibling distance=30pt] node (001) [circle, fill] {}
          child {[sibling distance=15pt] node (0010) {$a_5$}}
          child {[sibling distance=15pt] node (0011) [circle, fill] {}
            child {node (00110) {$a_3$}}
            child {node (00111) {$a_4$}}}}} 
      child {[sibling distance=60pt] node (01) [circle, fill] {} 
        child {[sibling distance=30pt] node (010) [circle, fill] {}
          child {[sibling distance=15pt] node (0100) [circle, fill] {}
            child {node (01000) {}}
            child {node (01001) {$b_7$}}}
          child {[sibling distance=15pt] node (0101) [circle,fill] {}
            child {node (01010) {$b_1$}}
            child {node (01011) {$b_2$}}}}
        child {[sibling distance=30pt] node (011) [circle, fill] {}
          child {[sibling distance=15pt] node (0110) [circle, fill] {}  
            child {node (01100) {$b_3$}}  
            child {node (01101) {$b_4$}}}
          child {[sibling distance=15pt] node (0111) [circle, fill] {}   
            child {node (01110) {$b_5$}}  
            child {node (01111) {$b_6$}}}}}}           
    child {node (1) {}};
  \end{tikzpicture}
\]
\caption{Tree pair for $xy = \alpha_{[00]}\beta_{[01]} \in V$ from Example~\ref{ex:v}}\label{fig:xy}
\end{figure}

\begin{figure}[p]
\centering
  \begin{tikzpicture}[
      scale=0.9,
      inner sep=0pt,
      baseline=-50pt,
      level distance=20pt,
      level 1/.style={sibling distance=160pt},
      level 2/.style={sibling distance=160pt},
      level 10/.style={sibling distance=15pt}
    ]
    \node (root) [circle,fill] {}
    child {[sibling distance=160pt] node (0) [circle,fill] {}
      child {[sibling distance=120pt] node (00) [circle,fill] {}
        child {[sibling distance=90pt] node (000) [circle,fill] {}
          child {[sibling distance=15pt] node (0000) [circle, fill] {}
            child {node (00000) {$a_1$}}
            child {node (00001) {$a_2$}}}
          child {[sibling distance=120pt] node (0001) [circle, fill] {}
              child {[sibling distance = 30pt] node (00010) [circle, fill] {}
                child {[sibling distance = 15pt] node (000100) [circle, fill] {}
                  child {node (0001000) {$c_1$}}
                  child {node (0001001) {$c_2$}}}
                child {[sibling distance = 15pt] node (000101) [circle, fill] {}
                  child {node (0001010) {$c_3$}}
                  child {node (0001011) {$c_4$}}}}
              child {[sibling distance=95pt] node (00011) [circle, fill] {} 
                child {[sibling distance=60pt] node (000110) [circle, fill] {}
                  child {[sibling distance=30pt] node (0001100) [circle, fill] {} 
                    child {[sibling distance=15pt] node (00011000) [circle, fill] {}
                      child {node (000110000) {$d_1$} }
                      child {node (000110001) {$d_2$} }}
                    child {[sibling distance=15pt] node (00011001) {} 
                      child {node (000110010) {$d_3$} }
                      child {node (000110011) {$d_4$} }}} 
                  child {[sibling distance=30pt] node (0001101) [circle, fill] {} 
                    child {[sibling distance=15pt] node (00011010) [circle, fill] {} 
                      child {node (000110100) {$d_5$}}
                      child {node (000110101) {$d_6$}}}
                    child {[sibling distance=15pt] node (00011011) [circle, fill] {}
                      child {node (000110110) {$d_7$}}
                      child {node (000110111) {$d_8$}}}}}
                child {[sibling distance=15pt] node (000111) [circle, fill] {}
                  child {[sibling distance=15pt] node (0001110) [circle, fill] {}
                    child {[sibling distance=15pt] node (00011100) [circle, fill] {}
                      child {node (000111000) {$d_9$} }
                      child {node (000111001) {$d_{10}$} }}
                    child {node (00011101) {} }} 
                  child {[sibling distance=15pt] node (0001111) {}}}}}}
        child {[sibling distance=30pt] node (001) [circle, fill] {}
          child {[sibling distance=15pt] node (0010) {$a_3$}}
          child {[sibling distance=15pt] node (0011) [circle, fill] {}
            child {node (00110) {$a_4$}}
            child {node (00111) {$a_5$}}}}} 
      child {[sibling distance=60pt] node (01) [circle, fill] {} 
        child {[sibling distance=30pt] node (010) [circle, fill] {}
          child {[sibling distance=15pt] node (0100) [circle, fill] {}
            child {node (01000) {$c_5$}}
            child {node (01001) {$b_1$}}}
          child {[sibling distance=15pt] node (0101) [circle,fill] {}
            child {node (01010) {$b_2$}}
            child {node (01011) {$b_3$}}}}
        child {[sibling distance=30pt] node (011) [circle, fill] {}
          child {[sibling distance=15pt] node (0110) [circle, fill] {}  
            child {node (01100) {$b_4$}}  
            child {node (01101) {$b_5$}}}
          child {[sibling distance=15pt] node (0111) [circle, fill] {}   
            child {node (01110) {$b_6$}}  
            child {node (01111) {$b_7$}}}}}}           
    child {node (1)  {$d_{11}$}};
  \end{tikzpicture}
  \vspace{11pt}
  \[
  \to
  \]
  \begin{tikzpicture}[
      scale=0.9,
      inner sep=0pt,
      baseline=-50pt,
      level distance=20pt,
      level 1/.style={sibling distance=160pt},
      level 2/.style={sibling distance=160pt},
      level 10/.style={sibling distance=15pt}
    ]
    \node (root) [circle,fill] {}
    child {[sibling distance=160pt] node (0) [circle,fill] {}
      child {[sibling distance=120pt] node (00) [circle,fill] {}
        child {[sibling distance=90pt] node (000) [circle,fill] {}
          child {[sibling distance=15pt] node (0000) [circle, fill] {}
            child {node (00000) {$a_2$}}
            child {node (00001) {$a_1$}}}
          child {[sibling distance=120pt] node (0001) [circle, fill] {}
              child {[sibling distance = 30pt] node (00010) [circle, fill] {}
                child {[sibling distance = 15pt] node (000100) [circle, fill] {}
                  child {node (0001000) {$c_5$}}
                  child {node (0001001) {$c_1$}}}
                child {[sibling distance = 15pt] node (000101) [circle, fill] {}
                  child {node (0001010) {$c_2$}}
                  child {node (0001011) {$c_3$}}}}
              child {[sibling distance=95pt] node (00011) [circle, fill] {} 
                child {[sibling distance=60pt] node (000110) [circle, fill] {}
                  child {[sibling distance=30pt] node (0001100) [circle, fill] {} 
                    child {[sibling distance=15pt] node (00011000) [circle, fill] {}
                      child {node (000110000) {$d_{11}$} }
                      child {node (000110001) {$d_1$} }}
                    child {[sibling distance=15pt] node (00011001) {} 
                      child {node (000110010) {$d_2$} }
                      child {node (000110011) {$d_3$} }}} 
                  child {[sibling distance=30pt] node (0001101) [circle, fill] {} 
                    child {[sibling distance=15pt] node (00011010) [circle, fill] {} 
                      child {node (000110100) {$d_4$}}
                      child {node (000110101) {$d_5$}}}
                    child {[sibling distance=15pt] node (00011011) [circle, fill] {}
                      child {node (000110110) {$d_6$}}
                      child {node (000110111) {$d_7$}}}}}
                child {[sibling distance=15pt] node (000111) [circle, fill] {}
                  child {[sibling distance=15pt] node (0001110) [circle, fill] {}
                    child {[sibling distance=15pt] node (00011100) [circle, fill] {}
                      child {node (000111000) {$d_8$} }
                      child {node (000111001) {$d_9$} }}
                    child {node (00011101) {} }} 
                  child {[sibling distance=15pt] node (0001111) {}}}}}}
        child {[sibling distance=30pt] node (001) [circle, fill] {}
          child {[sibling distance=15pt] node (0010) {$a_5$}}
          child {[sibling distance=15pt] node (0011) [circle, fill] {}
            child {node (00110) {$a_3$}}
            child {node (00111) {$a_4$}}}}} 
      child {[sibling distance=60pt] node (01) [circle, fill] {} 
        child {[sibling distance=30pt] node (010) [circle, fill] {}
          child {[sibling distance=15pt] node (0100) [circle, fill] {}
            child {node (01000) {$c_4$}}
            child {node (01001) {$b_7$}}}
          child {[sibling distance=15pt] node (0101) [circle,fill] {}
            child {node (01010) {$b_1$}}
            child {node (01011) {$b_2$}}}}
        child {[sibling distance=30pt] node (011) [circle, fill] {}
          child {[sibling distance=15pt] node (0110) [circle, fill] {}  
            child {node (01100) {$b_3$}}  
            child {node (01101) {$b_4$}}}
          child {[sibling distance=15pt] node (0111) [circle, fill] {}   
            child {node (01110) {$b_5$}}  
            child {node (01111) {$b_6$}}}}}}           
    child {node (1)  {$d_{10}$}};
  \end{tikzpicture}
\vspace{5pt}
\caption{Tree pair for $h=xyz_0z_1 \in V$ from Example~\ref{ex:v}}\label{fig:h}
\end{figure}

We now prove Theorems~\ref{thm:higman} and~\ref{thm:brin}.

\begin{proof}[Proof of Theorems~\ref{thm:higman} and~\ref{thm:brin}]
Let $(G,\Gamma,\Omega)$ be one of $V_n$, $V_n'$ or $mV$. Let $g \in G$ be nontrivial. If $G$ is $V_n$ or $V_n'$, then let $(\Gamma,\Omega) = (\mathfrak{C}_n,X_n)$, and if $G$ is $mV$, then $(\Gamma,\Omega) = (\mathfrak{C}^m,X^m)$. By Lemma~\ref{lem:incomparables_exist}, we may fix $u \in \Omega$ such that $u \perp ug$. By Lemma~\ref{lem:incomparables_extend}, we may fix $w_1, \dots, w_\ell \in \Omega$ such that $\mathcal{W} = \{u, ug, w_1, \dots, w_\ell\}$ is a basis for $\Gamma$. For clarity of notation, write $v=ug$.

We will now define an element $h \in G$, which we will prove satisfies $\<g, h \> = G$. Bear in mind that the definition of $h$ in Example~\ref{ex:v} is a special case of the construction we are about to describe.

Let $y = \beta_{[v]}$. Let $a,b \in \Omega^{(3)}$ be fixed points of $\alpha$ and $\beta$ respectively (see Remarks~\ref{rem:alpha_beta} and~\ref{rem:alpha_beta_nv}). Fix $0 \leq i \leq \ell$, recalling that $\ell+2$ is the size of the basis $\mathcal{W}$. We now define an element $z_i$. Write $u_i = uau_i'$ where $u_i'$ is the $n$\=/ary (where $n=2$ if $G=mV$) expansion of $i$ with exactly $\lceil{\log_n(\ell+1)}\rceil$ digits. Let $p_1 < p_2 < p_3 \dots$ be the prime numbers strictly greater than $q$ (recall that we defined the prime $q$ in  \eqref{eq:primes} in the case of $V_n$ and $V_n'$ and in \eqref{eq:primes_zeta_beta_nv} in the case of $mV$). Write $w_0 = vb$ and $p_0=5$. Define 
\[
z_i = \sigma(p_i-1)_{[u_i]} \cdot (u_i0^{\lceil{\log_n{(p_i-1)}}\rceil} \ \ w_i).
\]
where $\sigma$ is $\sigma_n$ or $\sigma^n$ as appropriate. Observe that $z_i$ is a $p_i$-cycle. 

We split into three cases depending on whether $G$ is $V_n$, $V_n'$ or $mV$. The arguments in each of these cases are quite similar.

\noindent\textbf{Case 1. $G = V_n$}\nopagebreak

Let $x = \alpha_{[u]}$ and define 
\[
h = x y z_0 z_1 \cdots z_\ell.
\]
Since $x$, $y$, $z_0$, $z_1$, \dots, $z_\ell$ have pairwise coprime orders and commute, each of these elements is a suitable power of $h$ and is, thus, contained in $\<g,h\>$. 

Proposition~\ref{prop:alpha_beta}(i) implies that 
\[ 
V_{n,[u]} = \< \alpha_{[u]}, \beta_{[u]} \> = \< x, y^g \>  \leq \< g, h \>.
\] 
In addition, $V_{n,[v]} = (V_{n,[u]})^g \leq \< g, h \>$. 

Let $0 \leq i \leq \ell$. Then $\sigma_n(p_i-1)_{[u_i]}$ is contained in $V_{[u_i]} \leq V_{[u]} \leq \<g,h\>$, so
\[
(u0 \ \ w_i) = (u_i0^{\lceil{\log_n{(p_i-1)}}\rceil} \ \ w_i)^{\chi_i} = ((\sigma_n(p_i-1)_{[u_i]})^{-1} \cdot z_i)^{\chi_i} \in \<g,h\>.
\]
where $\chi_i \in V_{[u]} \leq \<g,h\>$ satisfies $u_i0^{\lceil{\log_2{(p_i-1)}}\rceil}\chi_i = u_i0$. 

Recall that $w_0=vb$. For $c \in [n]$,
\[
(u0c \ \ vc) = (u0 \ \ w_0)^{\phi_c\psi_c} \in \<g,h\>
\]
where $\phi_c \in V_{[u]}$ satisfies $u0\phi_c = u0c$ and $\psi_c \in V_{[v]}$ satisfies $vb\psi_c = vc$. Consequently,
\[
(u0 \ \ v) = \prod_{c=0}^{n-1}(u0c \ \ vc) \in \<g,h\>. 
\]

Therefore, by Proposition~\ref{prop:links}(i),
\[
\<g,h\> = \< V_{n,[u]}, (u0 \ \ v), \{ (u0 \ \ w_i) \mid 1 \leq i \leq \ell \} \> = V_n.
\]

\noindent \textbf{Case 2. $G=V_n'$}\nopagebreak

Here we may, and will, assume that $n$ is odd. Let $x = \alpha'_{[u]}$ and define $h = x y z_0 z_1 \cdots z_\ell$. As in Case~1, $x, y, z_0, z_1, \dots, z_\ell \in \<g,h\>$. Now Proposition~\ref{prop:alpha_beta}(ii) implies that $V_{n,[u]}' = \<\alpha'_{[u]}, \beta_{[u]}\> = \<x,y^{g^{-1}}\> \leq \<g,h\>$, and we also have $V_{n,[v]}' = (V'_{n,[u]})^g \leq \< g, h \>$.

Let $0 \leq i \leq \ell$. Then $\sigma_n(p_i-2)_{[u_i]} \in V_{[u]} \leq \<g,h\>$, so
\[
(u0 \ \ u1 \ \ w_i) = (u_i0^{\lceil{\log_n{(p_i-1)}}\rceil-1}0 \ \  u_i0^{\lceil{\log_n{(p_i-1)}}\rceil-1}1 \ \ w_i)^{\chi_i} = ((\sigma_n(p_i-2)_{[u_i]})^{-1} \cdot z_i)^{\chi_i} \in \<g,h\>.
\]
where $\chi_i \in V'_{[u]} \leq \<g,h\>$ satisfies $u_i0^{\lceil{\log_n{(p_i-1)}}\rceil-1}0\chi_i = u_i0$ and $u_i0^{\lceil{\log_n{(p_i-1)}}\rceil-1}1\chi_i = u_i1$.

Recall that $w_0=vb$. For $c \in [n]$,
\[
(u0c \ \ u1c \ \ vc) = (u0 \ \ u1 \ \ w_0)^{\phi_c\psi_c} \in \<g,h\>
\]
where $\phi_c \in V_{[u]}$ satisfies $u0\phi_c = u0c$ and $u1\phi_c = u1c$ and where $\psi_c \in V_{[v]}$ satisfies $vb\psi_c = vc$. Therefore,
\[
(u0 \ \ u1 \ \ v) = \prod_{c=0}^{n-1}(u0c \ \ u1c \ \ vc) \in \<g,h\>. 
\] 

Consequently, Proposition~\ref{prop:links}(ii) implies that
\[
\<g,h\> = \< V_{n,[u]}', (u0 \ \ u1 \ \ v), \{ (u0 \ \ u1 \ \ w_i) \mid 1 \leq i \leq \ell \} \> = V_n'.
\]

\noindent \textbf{Case 3. $G=mV$}\nopagebreak

Let $x = \alpha_{[u]}$ and $h = x y z_0 z_1 \cdots z_\ell$. As in the previous cases,  $x, y, z_0, z_1, \dots, z_\ell \in \<g,h\>$. In particular, by Proposition~\ref{prop:alpha_beta_nv}, $mV_{[u]} = \<\alpha_{[u]}, \beta_{[u]} \> = \< x, y^{g^{-1}}\> \leq \<g,h\>$. By an argument almost identical to that in Case~1, we see that $(u.\boldsymbol{0} \ \ v) \in \<g,h\>$ and $(u.\boldsymbol{0} \ \ w_i) \in \<g,h\>$ for all $1 \leq i \leq \ell$. Consequently, by Proposition~\ref{prop:links_nv}, 
\[
\<g,h\> = \< mV_{[u]}, (u.\boldsymbol{0} \ \ v), \{ (u.\boldsymbol{0} \ \ w_i) \mid 1 \leq i \leq \ell \} \> = mV,
\] 
which completes the proof. 
\end{proof}

\section{Questions} \label{s:questions}

In this section, we present some open problems, which connect our main results with the existing work on finite groups.

In the introduction we remarked that a finite group is conjectured to be $\frac{3}{2}$\=/generated if and only if every proper quotient is cyclic, and that while this conjecture is false for arbitrary groups, there are no known $2$\=/generated counterexamples. This leads to the following natural question. (The authors were recently made aware that this question has attracted attention on \texttt{MathOverflow} \cite{ref:Palcoux19}.)

\begin{myquestion}
Is there a $2$\=/generated group with no noncyclic proper quotients that is not $\frac{3}{2}$\=/generated?
\end{myquestion}

Following \cite{ref:BrennerWiegold75}, the \emph{spread} of a group $G$, written $s(G)$, is the greatest nonnegative integer $k$ such that for all nontrivial elements $x_1, \dots, x_k$ there exists $y \in G$ such that 
\begin{equation} \label{eq:spread}
\<x_1,y\> = \cdots = \<x_k,y\> = G.
\end{equation}
Write $s(G)=\infty$ if there is no such maximum. Therefore, $s(G) \geq 1$ is equivalent to $G$ being $\frac{3}{2}$\=/generated. It is known that $s(G) \geq 2$ if $G$ is a finite simple group \cite{ref:BreuerGuralnickKantor08}. Indeed, it is conjectured that there does not exist a finite group $G$ satisfying $s(G)=1$ \cite[Conjecture~3.15]{ref:Burness19}. We pose the following questions.

\begin{myquestion} \label{q:spread_one}
Is there an infinite group $G$ such that $s(G)=1$?
\end{myquestion}

\begin{myquestion}
Is there a (necessarily infinite) noncyclic group $G$ such that $s(G) = \infty$.
\end{myquestion}

The \emph{uniform spread} of a group $G$, written $u(G)$, is the greatest nonnegative integer $k$ for which there exists a conjugacy class $C$ of $G$ such that for all nontrivial elements $x_1,\dots,x_k\in G$ there exists $y \in C$ such that \eqref{eq:spread} holds (and write $u(G) = \infty$ if there is no such maximum). Clearly, $u(G) \leq s(G)$. Every nonabelian finite simple group satisfies $u(G) \geq 2$ \cite{ref:BreuerGuralnickKantor08} and the only known nonabelian finite group for which $s(G) > 0$ but $u(G)=0$ is $S_6$ (see \cite[Theorem~2]{ref:BurnessGuest13} for example). It is therefore natural to ask the following.

\begin{myquestion}
Is there an infinite noncyclic group $G$ such that $s(G) > 0$ but $u(G)=0$?
\end{myquestion}

Following \cite{ref:BurnessHarper19}, a subset $S \subseteq G$ is a \emph{total dominating set} for $G$ if for all $x \in G$ there exists $y \in S$ such that $\<x,y\>=G$. Therefore, $u(G) > 0$ if and only if $G$ has a total dominating set consisting of conjugate elements, a so-called \emph{uniform dominating set}.

\begin{myquestion}
Is there an infinite noncyclic group with a finite total dominating set?
\end{myquestion}

Of course, the authors are particularly interested in whether the the Higman--Thompson or Brin--Thompson groups provide affirmative answers to these questions.

\vspace{11pt}

\noindent C. Donoven \newline
Department of Mathematical Sciences, Binghamton University, New York 13902, USA \newline
\texttt{cdonoven@math.binghamton.edu}

\vspace{5pt}

\noindent S. Harper \newline
School of Mathematics, University of Bristol, BS8 1UG, UK \newline
Heilbronn Institute for Mathematical Research, Bristol, UK \newline
\texttt{scott.harper@bristol.ac.uk}

\end{document}